\numberwithin{equation}{section}
\newtheorem{theorem}{Theorem}
\newtheorem{lemma}{Lemma}
\newtheorem{proposition}{Proposition}
\newtheorem{remark}{Remark}
\newtheorem{definition}{Definition}
\numberwithin{theorem}{section}
\numberwithin{corollary}{section}
\numberwithin{lemma}{section}
\numberwithin{definition}{section}
\numberwithin{proposition}{section}
\numberwithin{remark}{section}
\newcommand{\RR}{\mathbb R^N}
\newcommand{\R}{\mathbb R}
\newcommand{\medint}{-\kern  -,375cm\int}
\newcommand{\dint}{\displaystyle\int}
\newcommand{\supp}{\mathop{\mathrm{supp}}\nolimits}
\newcommand{\inte}{\int\!\!\!\!\int}
\newcommand{\di}{\mathop{\mathrm{d}\!}}
\begin{document}
\title[Fractional Isoperimetric Inequalities]{First Eigenvalue and Torsional Rigidity:\\
Isoperimetric Inequalities for the Fractional Laplacian}

\author{Barbara Brandolini$^1$}
\footnotetext[1]
{\textsc{Dipartimento di Matematica e Informatica, Universit\`a degli Studi di Palermo, via Archirafi 34, 90123 Palermo, Italy.}

\noindent\textit{Email address:} {\tt barbara.brandolini@unipa.it}}

\author{Ida de Bonis$^2$}
\footnotetext[2]
{\textsc{Dipartimento di Pianificazione, Design, Tecnologia dell'Architettura, Sapienza Universit\`a di Roma, via Flaminia 72, 00196 Roma, Italy.}

\noindent\textit{Email address:} {\tt ida.debonis@uniroma1.it}}

\author{Vincenzo Ferone$^3$}
\footnotetext[3]
{\textsc{Dipartimento di Matematica e Applicazioni ``Renato Caccioppoli'', Universit\`a degli Studi di Napoli Federico II, Via Cintia, Complesso Universitario Monte S. Angelo, 80143 Napoli, Italy.}

\noindent\textit{Email address:} {\tt ferone@unina.it} \textrm{(corresponding author)}}

\author{Gianpaolo Piscitelli$^4$}
\footnotetext[4]
{\textsc{Dipartimento di Scienze Economiche Giuridiche Informatiche e Motorie, Universit\`a degli Studi di Napoli Parthenope, Via Guglielmo Pepe, Rione Gescal, 80035 Nola (NA), Italy.}

\noindent\textit{Email address:} {\tt gianpaolo.piscitelli@uniparthenope.it}}

\author{Bruno Volzone$^5$}
\footnotetext[5]
{\textsc{Dipartimento di Matematica, Politecnico di Milano, Piazza Leonardo da Vinci 32, 20133 Milano, Italy.}

\noindent\textit{Email address:} {\tt bruno.volzone@polimi.it}}

\setcounter{tocdepth}{1}

\keywords{}

\begin{abstract}
We present a fractional counterpart of a generalized Kohler-Jobin inequality, showing that, among all bounded, open sets $\Omega\subset \R^N$ with Lipschitz boundary, having the same fractional torsional rigidity, the first Dirichlet eigenvalue $\lambda_1(\Omega)$ of the fractional Laplacian attains its minimum on balls. With the same arguments we also establish a reverse H\"older inequality for an eigenfunction corresponding to $\lambda_1(\Omega)$.

\vskip 0.5cm
\noindent\textit{Keywords:} Symmetrization, Fractional Laplacian, Kohler-Jobin inequality, reverse H\"older inequality.

\noindent\textit{MSC 2020:}
35P15, 35R11, 35J25.
\end{abstract}

\maketitle

%%%%%%%%%%%%%%%%%%%%%%%%

\section{Introduction}

It is well-known that, for a given, bounded open set $\Omega \subset{\mathbb{R}^N}$, one can define the following quantities
\begin{equation}
    \label{localei}
\lambda_1(\Omega) = \min_{\xi \in H_0^1(\Omega) \setminus \{0\}} \dfrac{\displaystyle\int_{\Omega} |D\xi|^2 \di x}{\displaystyle\int_{\Omega} \xi^2 \di x}
\end{equation}
and
\begin{equation}
    \label{localto}
T(\Omega)= \max_{\eta \in H_0^1(\Omega) \setminus \{0\}} \dfrac{\displaystyle\left(\int_{\Omega} |\eta| \di x\right)^2}{\displaystyle\int_{\Omega} |D\eta|^2 \di x}
\end{equation}
which are known as the principal frequency (the first one) and the torsional rigidity (the second one) of $\Omega$. 
Both quantities are realized by the solutions to some Dirichlet boundary problems. Namely, the minimum in \eqref{localei} is achieved by an eigenfunction for the problem
\begin{equation*}
\begin{cases}
-\Delta u = \lambda u & \text{in } \Omega, \\
u = 0 & \text{on } \partial\Omega,
\end{cases}
\end{equation*}
corresponding  to the first (smallest) eigenvalue $\lambda_1(\Omega)$. On the other hand,  the maximum in \eqref{localto} is achieved by the solution \(\mathtt{v} \in H_0^1(\Omega)\) (torsion function) to the problem
\begin{equation}
    \label{torl}
\begin{cases}
-\Delta v = 1 & \text{in } \Omega, \\
v = 0 & \text{on } \partial\Omega,
\end{cases}
\end{equation}
and it holds
\[
T(\Omega) = \int_\Omega \mathtt{v}(x) \, dx.
\]

We recall that, among sets with given measure, the ball minimizes $\lambda_1(\Omega)$, as stated by the Lord Rayleigh conjecture, firstly proven by Faber and Krahn (\cite{Fa,Kr}), and maximizes $T(\Omega)$, as stated by the Saint-Venant conjecture, firstly proven by P\'olya (\cite{Po}). However,
in \cite{PS} P\'olya and Szeg\H o stated the stronger conjecture that
among sets with given torsional rigidity, the ball minimizes the principal frequency. A proof of this conjecture was firstly given by Kohler-Jobin in \cite{KJ75,KJ78I} by using a new rearrangement technique known as ``transplantation \`a integrales de Dirichlet \'egales''. Using such a technique, given a smooth positive function $u\in H_0^1(\Omega)$, it is possible to construct a ball $B$ such that $T(B)\le T(\Omega)$ and a radially symmetric decreasing function $\tilde u\in H^1_0(B)$ such that
\begin{equation*}
\int_{B} |D\tilde u|^2 \di x=\int_{\Omega} |D u|^2 \di x\qquad\text{and}\qquad\int_{B} \tilde u^2 \di x\ge \int_{\Omega}  u^2 \di x.
\end{equation*}
Then, P\'olya-Szeg\H o conjecture easily follows and the case of equality can be characterized.

It is worth to point out that the main ingredients used to construct $B$ and $\tilde u$ are the following:
\begin{itemize}
\item[(\textit{i})]for a fixed $u\in H^1_0(B)$, one considers a ``modified'' torsional rigidity on a class of functions in the form $\varphi(u(x))$;
\item[(\textit{ii})] in order to prove the inequality between the $L^2$-norm of $u$ and $\tilde u$ one uses the fact that if \(\mathtt{v} \in H_0^1(\Omega)\) is the torsion function in $\Omega$, that is, it solves \eqref{torl}, then \((\mathtt{v}-t)^+\), $0<t<\max \mathtt{v}$, is the torsion function in $\Omega_t=\{x\in\Omega : \mathtt{v}(x)>t\}$.
\end{itemize}

The approach described above has been extended to various situation, for example, in \cite{Br}, where the first eigenvalue of the $p$-Laplacian and the $p$-torsional rigidity are considered, or in \cite{HL}, where the Gaussian principal frequency and the Gaussian torsional rigidity are considered. A natural question to ask is whether a suitable version of P\'olya-Szeg\H o conjecture holds true in a nonlocal setting.

The fractional Laplacian \((-\Delta)^s\) with \(0 < s < 1\) is a fundamental example of a nonlocal operator, appearing in many areas such as anomalous diffusion, probability, and geometric analysis (see, e.g., \cite{DPV,R,FRR} and the references therein).   
For a sufficiently regular function \(\phi \colon \mathbb{R}^N \to \mathbb{R}\), decaying at infinity, it is defined by  
\begin{equation}\label{fraclapl}
(-\Delta)^s \phi(x) := \gamma(N,s) \, \mathrm{P.V.} \int_{\mathbb{R}^N} \frac{\phi(x) - \phi(y)}{|x-y|^{N+2s}} \, \di y,
\end{equation}
where
\begin{equation}\label{gamma} 
\gamma(N,s)= \left( \int_{\mathbb{R}^N} \frac{1 - \cos(\zeta)}{|\zeta|^{\,N+2s}} \, d\zeta \right)^{-1}= \frac{2^{2s} \, s \, \Gamma\!\left( \frac{N+2s}{2} \right)}{\pi^{\frac N 2} \, \Gamma(1-s)}
\end{equation}
and P.V. stands for the principal value.

When $\Omega\subset \R^N$ is a bounded, open set having Lipschitz boundary, the first Dirichlet eigenvalue of the fractional Laplacian \(\lambda_1(\Omega)\), where for the sake of simplicity the dependence on the parameter $s$ is not explicitly denoted, is defined as the smallest value $\lambda$ so that the problem
\begin{equation} \label{P}
\begin{cases}
(-\Delta)^s u = \lambda u & \text{in } \Omega, \\
u = 0 & \text{in } \mathbb{R}^N \setminus \Omega,
\end{cases}
\end{equation}
has a non-trivial solution in \(X_0^s(\Omega)\), the fractional Sobolev space with zero boundary condition outside \(\Omega\) (see \Cref{sec_lapl} for details). It is well-known that $\lambda_1(\Omega)$ admits the following variational characterization
\begin{equation*}
\lambda_1(\Omega) = \min_{\xi \in X_0^s(\Omega) \setminus \{0\}} \dfrac{\dfrac{\gamma(N,s)}{2}\displaystyle\inte_{\mathbb{R}^N\times \R^N} \frac{|\xi(x) - \xi(y)|^2}{|x - y|^{N + 2s}} \, \di x \di y }
{\displaystyle\int_{\Omega} \xi^2 \di x}.
\end{equation*}

On the other hand, the fractional torsional rigidity $T(\Omega)$ of $\Omega$ is defined as
\begin{equation}\label{tor}
T(\Omega)=\max_{\eta \in X_0^s(\Omega)\setminus \{0\}} \dfrac{\displaystyle\left(\int_{\Omega} |\eta|\di x\right)^2}{\dfrac{\gamma(N,s)}{2}\displaystyle\inte_{\mathbb{R}^N\times \R^N} \frac{|\eta(x) - \eta(y)|^2}{|x - y|^{N + 2s}} \, \di x \di y }
.
\end{equation}
This maximum is attained at the unique function \(\mathtt{v} \in X_0^s(\Omega)\), known as fractional torsion function, which solves the fractional torsion problem
\[
\begin{cases}
(-\Delta)^s v = 1 & \text{in } \Omega, \\
v = 0 & \text{in } \mathbb{R}^N \setminus \Omega.
\end{cases}
\]
We immediately get that
\[
T(\Omega) = \int_\Omega \mathtt{v}(x) \, dx.
\]

Our aim is to prove the following fractional Kohler-Jobin inequality, stating that, among sets with fixed torsional rigidity, the ball has the smallest eigenvalue, i.e.,
\begin{equation}
    \label{KJ_ineq}
\lambda_1(\Omega) \geq \lambda_1(B_R)\qquad\text{where}\ \ B_R\ \ \text{is a ball with radius $R$ s.t.} \ \ T(B_R) = T(\Omega).
\end{equation}
Our initial plan was to follow the strategy developed by Kohler-Jobin in \cite{KJ75, KJ78I}, but, attempting to adapt this approach to the nonlocal case, the main obstacle we faced was that both points (\textit{i}) and (\textit{ii}) above do not seem  to have a natural counterpart in the nonlocal setting. 
More precisely, on one side the use of a function in the form $\varphi(u(x))$ in the nonlocal energy  appearing in the definition of fractional torsional rigidity is not obvious; on the other side the property that if \(\mathtt{v}\) solves \eqref{tor}, then \((\mathtt{v}-t)^+\), $0<t<\max \mathtt{v}$, is the torsion function in $\Omega_t=\{x\in\Omega : \mathtt{v}(x)>t\}$ is false in the nonlocal context.

Therefore, we have used a different approach which is based on the fact that the fractional torsional rigidity $T(\Omega)$ can be seen as a particular case of a ``generalized fractional torsional rigidity'', defined, for $\alpha \in \R$, as
\begin{equation}
    \label{torgen_max_intro}
Q(\alpha, \Omega) = \max_{\psi \in X_0^s(\Omega)} \left\{ - \frac{\gamma(N,s)}{2}\inte_{\mathbb{R}^N\times \R^N} \frac{|\psi(x) - \psi(y)|^2}{|x - y|^{N + 2s}} \, \di x \di y + \alpha \int_\Omega |\psi(x)|^2 \, \di x + 2 \int_\Omega \psi(x) \, \di x \right\},
\end{equation}
which has been firstly introduced in \cite{Ba} when the local case is considered.
For any \(\alpha \in (-\infty, \lambda_1(\Omega))\),
the maximum in \eqref{torgen_max_intro} is attained at the function \(\mathtt{w}\) (generalized torsion function), which is the solution to the problem
\begin{equation*}
\begin{cases}
(-\Delta)^s w = \alpha w + 1 & \text{in } \Omega, \\
w = 0 & \text{in } \mathbb{R}^N \setminus \Omega,
\end{cases}
\end{equation*}
and it is immediate to observe that $Q(0,\Omega)=T(\Omega)$.

In this paper we will prove that, for any \(\alpha \in (-\infty, \lambda_1(\Omega))\) and for any bounded open set \(\Omega \subset \mathbb{R}^N\) with Lipschitz boundary, the following inequality holds true:
\begin{equation}\label{tor-alpha}
\lambda_1(\Omega) \geq \lambda_1(B_{R(\alpha)})\quad\text{where}\ \ B_{R(\alpha)}\ \ \text{is a ball with radius $R(\alpha)$ s.t.} \ Q(\alpha, \Omega) = Q(\alpha, B_{R(\alpha)}).
\end{equation}
Clearly, when \(\alpha = 0\), the inequality reduces to \eqref{KJ_ineq}. The proof is based on the fact that the mapping $\alpha \mapsto R(\alpha)$ in \eqref{tor-alpha} is decreasing and the full statement \eqref{tor-alpha} follows taking the limit as $\alpha\rightarrow\lambda_1(\Omega)$. 
Let us observe that, in the local context, a similar approach has been adopted in \cite{KJ77I,KJ78I,KJ78II}, where, using different techniques, the counterpart of \eqref{tor-alpha} is proven. Unfortunately, since \eqref{tor-alpha} is obtained via a limit procedure, it seems that the method does not give a characterization of the equality case (see \cite[Prop.4.1]{SVV} for the study of the equality case in a nonlocal problem).

As we have already said, in order to study the properties of $Q(\alpha, \Omega)$, the techniques employed in the local context do not seem to be appropriate, and the main ingredient in our proof is a comparison result between the generalized torsion functions in $\Omega$ and in $B_{R(\alpha)}$ in terms of mass concentration estimates. Such a result is based on symmetrization techniques introduced in \cite{FV} (see also \cite{BDFV}, \cite{FPV}) and it can be seen as the natural counterpart of similar ``local'' results contained in \cite{C1} and based on the well-known symmetrization techniques developed by Talenti \cite{T}.

As already observed in \cite{C2}, a comparison result of the type described above can be used in order to prove a so-called Payne-Rayner inequality (see \cite{PR1,PR2,KJ77,KJ81}) which, in the original formulation, 
provides a sharp estimate for the $L^{2}$ norm of a first Dirichlet-Laplacian eigenfunction   in terms of its $L^{1}$ norm. This kind of reverse H\"older inequality was generalized in \cite{C2,AFT}, where the authors showed that the $L^{q}$ norm of an eigenfunction of a linear, or even nonlinear, operator in divergence form can be sharply estimated by its $L^{p}$ norms whenever $q \ge p\ge 1$ (see also \cite{BCT} in the case of Neumann boundary conditions). 
In this paper, we will prove that, for any eigenfunction $\mathtt u_1$ corresponding to $\lambda_1(\Omega)$ and for any $1<q\le +\infty$, the following reverse H\"older inequality holds true:
\begin{equation*}
\|\mathtt u_1\|_{L^q(\Omega)} \leq C\lambda_1(\Omega)^{\frac N{2s}\left(1-\frac1q\right)} \|\mathtt u_1\|_{L^1(\Omega)},
\end{equation*}
where the value of the positive $C=C(N,s,q)$ is explicitly given. Unfortunately, our techniques do not seem to work in order to prove a more general result such as a $p-q$ reverse H\"older inequality ($q \ge p\ge 1$) in the nonlocal setting.

The paper is structured as follows. 
In \Cref{sec_notation}, we introduce notation and preliminaries. \Cref{sec_lapl} is devoted to the fractional Laplacian spectral problem and the fractional torsional rigidity, while \Cref{sec_torsion} is dedicated to a thorough analysis of the generalized torsion and its properties. We then present a key comparison result, that is crucial for then deriving both the Kohler-Jobin (see \Cref{sec_compar}) and the reverse H\"older (see \Cref{pay_sec}) inequalities.

%%%%%%%%%%%%%%%%%%%%%%%%%%%%%%

\section{Notation and preliminaries}
\label{sec_notation}

From now on, we denote by $B_r(x_0)$ the open  ball in $\mathbb{R}^N$, centered at $x_0$, with radius $r$ and we write $B_r=B_r(0)$. $B_r^c(x_0)$ stands for the complement of the ball $B_r(x_0)$ and $\omega_N$ for the measure of the unitary ball, that is
$$
\omega_N =\frac{\pi^{\frac N 2}}{\Gamma\left(\frac N 2 +1\right)}.
$$
Furthermore, for any set $E\subseteq \R^N$, we  denote by $E^{\sharp}$ the ball in $\mathbb{R}^{N}$, centered at the origin, with the same Lebesgue measure as $E$ ($E^{\sharp}=\R^N$ if $|E|=+\infty$). 

In this section, we recall the definition of decreasing rearrangement and some of its properties, which will be useful in the following. For a more exhaustive treatment of the argument we refer the interested reader, for example, to \cite{CR,HLP,Ka,Ke}.

Let us consider a real measurable function $f$ on an open set $\Omega\subset\R^N$ and, for any $t\geq 0$, the super-level set 
\[\Omega_f^t=\left\{  x\in\Omega:\left\vert f\left(
x\right)  \right\vert >t\right\}.\]
We define the \emph{distribution function} $\mu_{f}$ of $f$ as follows
\begin{equation*}
\mu_{f}(t)  =\left\vert \Omega_f^t\right\vert \qquad\text{for every }t
\ge0,
\end{equation*}
and we assume that $\mu_f(t)<+\infty$ for every $t > 0$.
By definition,  $\mu_f(\cdot)$ is a right-continuous function, decreasing from $\mu_f (0)=|\supp(f)|$ to $\mu_f(+\infty)=0$ as $t$ increases from 0 to $+\infty$. It presents a discontinuity at every value $t$ which is assumed by $|f|$ on a set of positive measure, and, for such a value of $t$, we have
\[
\mu_f(t^-)-\mu_f(t)=|\{x\in\Omega:\left\vert f\left(
x\right)  \right\vert =t \}|.
\]
For every $t\geq 0$, we set
\begin{equation*}
{r_f(t)=\left(\frac{\mu_f(t)}{\omega_N}\right)^\frac 1N}  \quad \mbox{and} \quad r_f(t^-)=\left(\frac{\mu_f(t^-)}{\omega_N}\right)^\frac 1N.
\end{equation*}
It is clear that $(\Omega_f^t)^\sharp=B_{r_f(t)}$ and that $r_f(t)$ is also a right-continuous function. 

\noindent The \emph{(one dimensional) decreasing rearrangement} $f^\ast$ of $f$ is defined as follows
\begin{equation*}
    \label{f^*}
f^{\ast}\left(  \sigma\right)  =\sup\left\{ t\geq0:\mu_{f}\left(  t\right)
>\sigma\right\}\qquad\sigma\in\left[0,+\infty \right[,
\end{equation*}
that is, $f^*$ is the distribution function of $\mu_f$.
We stress that, if $\mu_f$ is strictly decreasing, then $f^*$ extends to the whole half-line $[0,+\infty[$ the inverse function of $\mu_f$. In the general case, we have that $f^*(\mu_f(t))\le t$, for $t\in[0,+\infty[$, and $\mu_f(f^*(\sigma))\le \sigma$, for $\sigma\in[0,+\infty[$. We also observe that, if $\mu_f(t)$ has a jump, i.e., $\mu_f(t)<\mu_f(t^-)$ for some $t$, then $f^*(\sigma)$ has a flat zone, i.e., $f^*(\sigma)=t$ for every $\sigma\in[\mu_f(t),\mu_f(t^-)]$ (see Figure 1). Similarly, if $\mu_f(t)$ has a flat zone, then $f^*(s)$ has a jump.

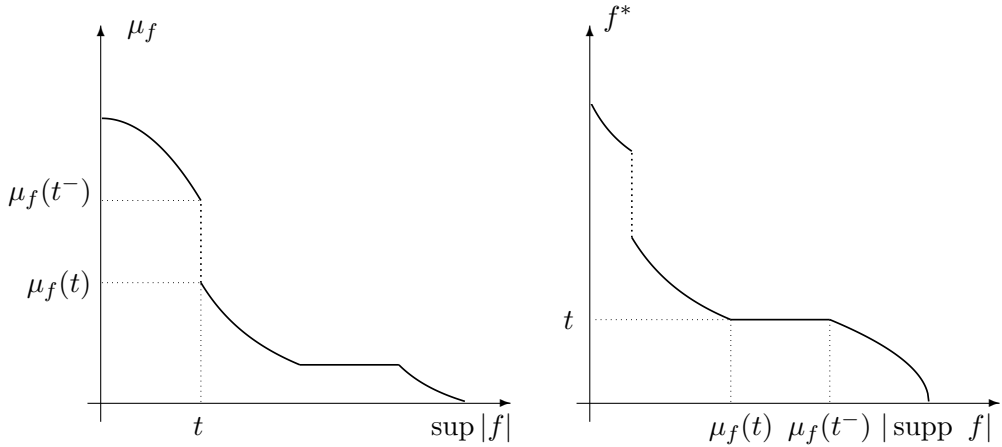
\begin{figure}[ht]\label{figura}
{\begin{picture}(450,200)
\put(29.5,-2.5){\begin{tikzpicture}[scale=.8]
\begin{axis}[samples=100,xmin=0,xmax=2.5,
      ymin=0,ymax=2.5,
    axis line style={draw=none},
      tick style={draw=none}, xticklabels={,,},
      yticklabels={,,}]
    \addplot[domain = 1.2:1.8, thick,dotted](0.6,x); 
    \addplot[domain = .33:1.2,dotted](0.6,x); 
    \addplot[domain = 0:.6,dotted](x,1.8); 
    \addplot[domain = 0:.6,dotted](x,1.2); 
    \addplot[domain = 0:{0.6}, thick](x,{2.4-5*(x^2)/3}); 
   \addplot[domain = .6:1.2, thick](x,{18/(25*x)}); 
    \addplot[domain = 1.2:1.8, thick](x,3/5); 
   \addplot[domain = 1.8:2.2, thick](x,{3/(10*x-13)}); 
\end{axis}
\end{tikzpicture}}
\put(192.5,-2.5){\begin{tikzpicture}[scale=.8]
\begin{axis}[samples=100,xmin=0,xmax=2.5,
      ymin=0,ymax=2.5,
    axis line style={draw=none},
      tick style={draw=none}, xticklabels={,,},
      yticklabels={,,}]
    \addplot[domain = 1.53:2.15,thick,dotted](0.6,x); 
    \addplot[domain = 0.33:.93,dotted](1.2,x); 
    \addplot[domain = 0.33:.93,dotted](1.8,x); 
    \addplot[domain = 0.38:1.2,dotted](x,.93); 
   \addplot[domain = .6:1.2, thick](x,{.33+18/(25*x)}); 
   \addplot[domain = 0.2:{0.6}, thick](x,{1.66+.3/x}); 
    \addplot[domain = 1.8:{2.4}, thick](x,{.33+sqrt((2.4-x)*3/5)}); 
    \addplot[domain = 1.2:1.8, thick](x,0.93); 
\end{axis}
\end{tikzpicture}}
\put(30,20){\vector(1,0){160}}
\put(35,13){\vector(0,1){150}}
\put(215,20){\vector(1,0){160}}
\put(220,13){\vector(0,1){150}}
\put(225,163){$f^*$}
\put(45,160){$\mu_f$}
\put(70,8){$t$}
\put(7,62){$\mu_f(t)$}
\put(0,97){$\mu_f(t^-)$}
\put(265,8){$\mu_f(t)$}
\put(295,8){$\mu_f(t^-)$}
\put(330,8){$|\supp\ f|$}
\put(160,8){$\sup|f|$}
\put(210,48){$t$}
\end{picture}}
\caption{On the left, a distribution function which presents a discontinuity and a flat zone; on the right, the corresponding decreasing rearrangement.}
\end{figure}

\noindent If $\Omega$ has a finite measure, we can also define the 
\emph{(one dimensional) increasing rearrangement} $f_\ast$ of $f$, that is
\[
f_{\ast}\left(\sigma\right)  =f^\ast(|\Omega|-\sigma),\qquad\sigma\in\left(  0,\left\vert \Omega\right\vert \right).
\]
We call the \emph{radially decreasing rearrangement} (or \emph{Schwarz decreasing rearrangement}) $f^\sharp$ of $f$ the function defined as
\[
f^{\sharp}\left(  x\right)  =f^{\ast}(\omega_{N}\left\vert x\right\vert
^{N}),\qquad x\in\Omega^{\sharp},
\]
while we call the \emph{radially increasing rearrangement} $f_\sharp$ of $f$ the
function
\[
f_{\sharp}\left(  x\right)  =f_{\ast}(\omega_{N}\left\vert x\right\vert
^{N}),\qquad x\in\Omega^{\sharp}.
\]
From the definitions, we immediately deduce that $f^*$, $f_*$, $f^\sharp$ and $f_\sharp$ have the same distribution function as $f$. As a consequence, by the layer cake formula, rearrangements preserve the $L^{p}$ norms, that is
\begin{equation*}
\|f\|_{L^{p}(\Omega)}=\|f^{\ast}\|_{L^{p}(0,|\Omega|)}=\|f^{\sharp}\|_{L^{p}(\Omega^{\sharp})}, \qquad 1 \le p \le +\infty.
\end{equation*}
Furthermore, for any couple of measurable functions $f$ and $g$, the classical Hardy-Littlewood inequalities holds true
\begin{equation*}
\int_{\Omega}\vert f(x)\,  g(x)  \vert \di x\leq\int_{0}^{\left\vert \Omega\right\vert}f^{\ast}(\sigma)\,  g^{\ast}(\sigma)  \di\sigma=\int_{\Omega^{\sharp}}f^{\sharp}(x)\,g^{\sharp}(x)\,\di x\,,
\end{equation*}
and
\begin{equation*}
\int_{\Omega^{\sharp}}f^{\sharp}(x)\,g_{\sharp}(x)\,\di x\,=\int_{0}^{\left\vert \Omega\right\vert}f^{\ast}(\sigma)\,  g_{\ast}(\sigma)  \di \sigma \le \int_{\Omega}\vert f(x)\,  g(x)  \vert \di x.
\end{equation*}

Since we will deal with integrals of solutions to nonlocal problems, the following definition will play a fundamental role.

\begin{definition}
 Let $f,g\in L^1_{\mathrm{loc}}(\R^N)$.
 We say that $f$ is less concentrated than $g$, and we write $f \prec g$, if for every $\sigma>0$ we have
 $$
 \int_0^\sigma u^\ast(t)\, \di t \le \int_0^\sigma v^\ast(t)\, \di t,
 $$
 or, equivalently, for every $r>0$,
 $$
 \int_{B_r}f^\sharp(x)\, \di x\le \int_{B_r}g^\sharp(x)\, \di x.
 $$
\end{definition}
\noindent Clearly, this definition can be adapted to functions defined in an open subset $\Omega$ of $\R^N$, by extending the functions to zero outside $\Omega$. 
The partial order relationship $\prec$ is called comparison of mass concentrations and it satisfies some nice properties (see, for instance, \cite{ATL}).

\begin{proposition}\label{Propconves}
Let $f,g \in L^1(\Omega)$ be two nonnegative functions. Then, the following statements are equivalent:
\begin{itemize}
\item[(a)] $f \prec g$;
\item[(b)] for all nonnegative $\varphi\in L^{\infty}(\Omega)$
\begin{equation}\label{B}
\int_\Omega f(x)\varphi(x)\, \di x\leq \int_0^{|\Omega|} g^\ast(r)\varphi^\ast(r)\, \di r = \int_{\Omega^\sharp}g^\sharp(x)\varphi^\sharp(x)\, \di x;
\end{equation}
\item[(c)] for all convex, nonnegative, Lipschitz function $\Phi$, such that $\Phi(0)=0,$
$$\int_{\Omega}\Phi(f(x))\,\di x\leq \int_{\Omega^\sharp}\Phi(g(x))\,\di x.$$
\end{itemize}
\end{proposition}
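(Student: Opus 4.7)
The plan is to establish the cycle (a)$\Rightarrow$(b)$\Rightarrow$(c)$\Rightarrow$(a). Two ingredients carry the argument: the Hardy-Littlewood inequality recalled in the preliminaries, and the following elementary decomposition. Any convex, nonnegative, Lipschitz $\Phi:[0,\infty)\to[0,\infty)$ with $\Phi(0)=0$ is automatically nondecreasing (from convexity together with $\Phi(0)=0$ and $\Phi\ge 0$) and admits the representation
$$\Phi(s)=\Phi'(0^+)\,s+\int_0^\infty (s-\tau)_+\,\di\nu(\tau),$$
where $\nu$ is a finite nonnegative Borel measure on $[0,\infty)$ (the distributional derivative of the nondecreasing $\Phi'$, finite because $\Phi$ is Lipschitz). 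This reduces (c) to its two generating cases $\Phi(s)=s$ and $\Phi(s)=(s-\tau)_+$.

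For (a)$\Rightarrow$(b), I would start from Hardy-Littlewood, $\int_\Omega f\varphi\,\di x\le \int_0^{|\Omega|}f^*(r)\varphi^*(r)\,\di r$, and then use the layer-cake identity $\varphi^*(r)=\int_0^\infty \chi_{(0,\mu_\varphi(t))}(r)\,\di t$ together with Fubini to get $\int_0^{|\Omega|}f^*\varphi^*\,\di r=\int_0^\infty \bigl(\int_0^{\mu_\varphi(t)}f^*\,\di r\bigr)\di t$; applying (a) inside with $\sigma=\mu_\varphi(t)$ and reassembling by Fubini gives $\int_0^{|\Omega|}g^*(r)\varphi^*(r)\,\di r$, which is exactly (b). For (b)$\Rightarrow$(c), I would first test (b) with $\varphi=\chi_{\{f>\tau\}}$, whose decreasing rearrangement is $\chi_{(0,\mu_f(\tau))}$, to obtain $\int_0^{\mu_f(\tau)}f^*\,\di r\le \int_0^{\mu_f(\tau)}g^*\,\di r$. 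Subtracting $\tau\mu_f(\tau)$ from both sides and using $x\le x_+$ on the right yields the intermediate inequality
$$\int_\Omega (f-\tau)_+\,\di x\le \int_\Omega(g-\tau)_+\,\di x,\qquad \forall\,\tau\ge 0.$$
Combined with $\varphi\equiv 1$ in (b) (which handles the linear part) and the representation of $\Phi$ above, Fubini in $\tau$ yields (c).

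For (c)$\Rightarrow$(a), I plug $\Phi_\tau(s)=(s-\tau)_+$ into (c) to get $\int_0^{|\Omega|}(f^*-\tau)_+\,\di r\le \int_0^{|\Omega|}(g^*-\tau)_+\,\di r$ for every $\tau\ge 0$. Fixing $\sigma\in(0,|\Omega|]$ and choosing $\tau=g^*(\sigma)$, monotonicity of $g^*$ collapses the right-hand side to $\int_0^\sigma g^*\,\di r-\tau\sigma$, while the left-hand side is bounded below by $\int_0^\sigma f^*\,\di r-\tau\sigma$ since $(f^*-\tau)_+\ge f^*-\tau$; cancelling $\tau\sigma$ gives precisely the mass-concentration inequality in (a). The main technical nuisance I foresee is the bookkeeping around jumps and flat zones of the rearrangements, but the choice $\tau=g^*(\sigma)$ transparently handles both a flat zone of $g^*$ at height $\tau$ (where $(g^*-\tau)_+=0$) and a downward jump of $g^*$ at $\sigma$, so the argument closes without any approximation.
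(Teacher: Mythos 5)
Your cyclic argument (a)$\Rightarrow$(b)$\Rightarrow$(c)$\Rightarrow$(a) is correct and complete; note that the paper itself offers no proof of \Cref{Propconves}, referring instead to \cite{ATL}, so there is no in-paper argument to compare against --- but your proof is essentially the classical one from that literature. The three steps check out: for (a)$\Rightarrow$(b) the Hardy--Littlewood inequality plus the layer-cake identity $\varphi^*(r)=\int_0^\infty \chi_{(0,\mu_\varphi(t))}(r)\,\di t$ and Tonelli reduce everything to the mass inequality at $\sigma=\mu_\varphi(t)$; for (b)$\Rightarrow$(c) your representation $\Phi(s)=\Phi'(0^+)s+\int_0^\infty(s-\tau)_+\di\nu(\tau)$ is legitimate (convexity with $\Phi(0)=0$ and $\Phi\ge0$ does force monotonicity, and the Lipschitz bound makes $\nu$ finite with $\Phi'(0^+)\ge0$), and the tests $\varphi=\chi_{\{f>\tau\}}$ and $\varphi\equiv1$ give exactly the generating inequalities; for (c)$\Rightarrow$(a) the choice $\tau=g^*(\sigma)$ indeed collapses the right-hand side and handles flat zones and jumps without approximation. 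Two small points you leave implicit but should perhaps state: the identity $\int_{\{f>\tau\}}f\,\di x=\int_0^{\mu_f(\tau)}f^*(r)\,\di r$ (equimeasurability of $f\chi_{\{f>\tau\}}$ with $f^*\chi_{(0,\mu_f(\tau))}$), which is what turns the tested inequality (b) into $\int_0^{\mu_f(\tau)}f^*\le\int_0^{\mu_f(\tau)}g^*$ --- although for your purposes the one-line variant $\int_\Omega(f-\tau)_+\di x=\int_{\{f>\tau\}}f\,\di x-\tau\mu_f(\tau)$ already suffices and avoids this identity altogether; and the extension of the concentration inequality from $\sigma\in(0,|\Omega|]$ to all $\sigma>0$, which is immediate since $f^*$ vanishes beyond $|\Omega|$ while $\int_0^\sigma g^*$ is nondecreasing in $\sigma$. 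Neither is a gap, just bookkeeping worth a sentence.
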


\noindent From Proposition \ref{Propconves} we immediately deduce that, if $f \prec g$, then
$$
||f||_{L^p(\Omega)}\le ||g||_{L^p(\Omega)}, \qquad 1 \le p \le +\infty.
$$
Moreover, if $f,\,g\in L^{p}(\Omega)$ with $p>1$, inequality \eqref{B} holds true for all nonnegative $\varphi\in L^{p^{\prime}}(\Omega)$, where $\frac 1 p + \frac{1}{p'}=1$.

We end this section by recalling the celebrated P\'olya-Szeg\H o principle, stating that the radially decreasing rearrangement $f^\sharp$ of a Sobolev function $f$ is a Sobolev function and its energy does not exceed the energy of $f$.
\begin{proposition}\label{PS}
Let $1 \le p <\infty$ and let $f \in W^{1,p}(\R^N)$. Then $f^\sharp \in W^{1,p}(\R^N)$ and the following inequality holds true
$$
\int_{\R^N} |\nabla f|^p\, \di x \ge \int_{\R^N} |\nabla f^\sharp|^p \, \di x.
$$
\end{proposition}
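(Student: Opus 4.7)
The plan is to follow the classical coarea-plus-isoperimetric route pioneered by Pólya--Szegő and made rigorous by Talenti. First I would reduce to the case of a nonnegative $f\in C_c^\infty(\R^N)$ by density, since the decreasing rearrangement is an $L^p$-isometry and is continuous along standard mollifier (or Lipschitz truncation) approximations. For such $f$, I would invoke Sard's theorem to exclude the null set of critical values, so that the level sets $\{f=t\}$ are smooth hypersurfaces for a.e. $t$, and then rewrite the energy via the coarea formula as
$$
\int_{\R^N}|\nabla f|^p\,\di x=\int_0^{\sup f}\!\int_{\{f=t\}}|\nabla f|^{p-1}\,dH^{N-1}\,\di t,
$$
together with the companion identity $-\mu_f'(t)=\int_{\{f=t\}}|\nabla f|^{-1}\,dH^{N-1}$ valid for a.e.\ $t$.

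The two crucial ingredients then combine pointwise in $t$. By Hölder's inequality applied on the slice $\{f=t\}$ to the factorization $1=|\nabla f|^{(p-1)/p}\cdot|\nabla f|^{-(p-1)/p}$, one obtains
$$
P\bigl(\{f>t\}\bigr)^p\le\left(\int_{\{f=t\}}|\nabla f|^{p-1}\,dH^{N-1}\right)\bigl(-\mu_f'(t)\bigr)^{p-1},
$$
where $P$ denotes the perimeter. By the Euclidean isoperimetric inequality,
$$
P\bigl(\{f>t\}\bigr)\ge N\omega_N^{1/N}\,\mu_f(t)^{(N-1)/N}=P\bigl(\{f^\sharp>t\}\bigr),
$$
with equality realized on balls. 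Since $f^\sharp$ is radial and radially decreasing, $|\nabla f^\sharp|$ is constant on each of its level sets, so Hölder's inequality is saturated for $f^\sharp$, and the isoperimetric inequality becomes an equality. Combining these observations with the identity $\mu_{f^\sharp}=\mu_f$ yields the pointwise estimate
$$
\int_{\{f^\sharp=t\}}|\nabla f^\sharp|^{p-1}\,dH^{N-1}\le\int_{\{f=t\}}|\nabla f|^{p-1}\,dH^{N-1}
$$
for a.e.\ $t$; integrating in $t$ and applying the coarea formula (forwards for $f$, backwards for $f^\sharp$) produces the desired energy inequality.

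The main obstacle is technical rather than conceptual, and it lies in two places. First, the coarea/Sard argument is immediate for smooth $f$, but promoting the estimate to a general $f\in W^{1,p}(\R^N)$ requires continuity of the map $f\mapsto f^\sharp$ on $W^{1,p}$, which is the usual Almgren--Lieb-type result and needs to be invoked carefully. Second, when $\mu_f$ has flat zones, i.e.\ when $|f|$ attains some level on a set of positive measure, the coarea identity has to be interpreted with some care; however, on such plateaus $\nabla f=0$ almost everywhere, so they contribute nothing to the gradient energy and can be excised without affecting the inequality, which closes the argument.
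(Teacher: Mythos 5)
The paper does not actually prove Proposition \ref{PS}: it is recalled as the classical P\'olya--Szeg\H o principle, with the rearrangement literature cited at the start of \Cref{sec_notation}. Your coarea--isoperimetric--H\"older outline is indeed the standard (Talenti-type) route, so the strategy is the right one; however, two steps are flawed as written. First, the reduction to smooth functions rests on a false tool: you invoke ``continuity of the map $f\mapsto f^\sharp$ on $W^{1,p}$'' and attribute it to Almgren--Lieb, but the content of \cite{AL} is precisely that symmetric decreasing rearrangement is \emph{not} continuous on $W^{1,p}(\R^N)$ for $N\ge 2$ (it is continuous only at co-area regular functions). Fortunately continuity is not needed: if $f_n\in C^\infty_c(\R^N)$ and $f_n\to f$ in $W^{1,p}$, then $f_n^\sharp\to f^\sharp$ in $L^p$ because rearrangement is an $L^p$-contraction, the smooth case gives the uniform bound $\|\nabla f_n^\sharp\|_{L^p}\le\|\nabla f_n\|_{L^p}$, and weak compactness together with lower semicontinuity of the gradient norm concludes. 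For $p=1$ this at first only yields $f^\sharp\in BV$, and an extra (true but not free) argument is needed to exclude a singular part of $Df^\sharp$, e.g.\ using the coarea formula to show that $\mu_f$ has no interior plateaus, so that $f^\sharp$ has no jump part; this should at least be acknowledged.

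The second, more serious gap is the pointwise slice estimate for $f^\sharp$. Saturation of H\"older on the spheres and of the isoperimetric inequality, combined with $\mu_{f^\sharp}=\mu_f$, only yield
\begin{equation*}
\left(\int_{\{f^\sharp=t\}}|\nabla f^\sharp|^{p-1}\,\di \mathcal{H}^{N-1}\right)\left(\int_{\{f^\sharp=t\}}|\nabla f^\sharp|^{-1}\,\di \mathcal{H}^{N-1}\right)^{p-1}\le\left(\int_{\{f=t\}}|\nabla f|^{p-1}\,\di \mathcal{H}^{N-1}\right)\bigl(-\mu_f'(t)\bigr)^{p-1},
\end{equation*}
and to deduce your displayed comparison of the two slice integrals you still need $-\mu_f'(t)\le\int_{\{f^\sharp=t\}}|\nabla f^\sharp|^{-1}\,\di \mathcal{H}^{N-1}$ for a.e.\ $t$. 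The general coarea argument gives the \emph{opposite} inequality for an arbitrary Sobolev function (the set where the gradient of $f^\sharp$ vanishes can only make $-\mu_f'$ larger), and Sard's theorem cannot be invoked for $f^\sharp$, which need not be smooth even when $f$ is; this is exactly the ``co-area regularity'' issue isolated in \cite{AL}, and your excision remark about plateaus of $f$ does not address it, since the problematic plateaus are those of $f^\sharp$. The classical way to close the argument avoids any coarea identity for $f^\sharp$: one computes $\int_{\R^N}|\nabla f^\sharp|^p\,\di x$ directly in radial coordinates through $f^\sharp(x)=f^*(\omega_N|x|^N)$, uses the a.e.\ inequality between the derivative of the monotone function $\mu_f$ and that of its generalized inverse $f^*$ (their product is at most one), and compares the resulting one-dimensional integral with the lower bound $\int_0^\infty \bigl(N\omega_N^{1/N}\mu_f(t)^{(N-1)/N}\bigr)^p\bigl(-\mu_f'(t)\bigr)^{1-p}\,\di t$ obtained for $f$ from your step (a). As written, your treatment of $f^\sharp$ assumes the favorable direction without justification, so the proof has a genuine gap there, although the overall scheme is sound and repairable along these standard lines.
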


%%%%%%%%%%%%%%%%%%%%%%%%

\section{Fractional Laplacian: The Eigenvalue Problem and the Torsional Rigidity} \label{sec_lapl}
Let $\Omega\subset\RR$ be an open set and take $s \in (0,1)$. As already stated in the Introduction, we define the fractional Laplacian of a smooth and decaying real function $\phi$ on $\R^N$ by \eqref{fraclapl}.
The choice of $\gamma(N,s)$ in  in \eqref{gamma} ensures that
\((-\Delta)^s u\) converges to the classical Laplacian \(-\Delta u\) as \(s \to 1^{-}\) (see \cite{DPV}).

Denoted by $[\phi]_{H(\mathbb{R}^N)}$ the fractional Gagliardo seminorm of $\phi$, that is
\begin{equation*}
    \label{gagl_semi}
[\phi]_{H^{s}(\RR)}=\left(\dfrac{\gamma(N,s)}{2}\inte_{ \R^N\times\R^N}\frac{|\phi(x)-\phi(y)|^{2}}{|x-y|^{N+2s}}\,\di x\di y\right)^{\frac 1 2},
\end{equation*}
the Sobolev space $H^s(\mathbb{R}^N)$ is defined as
\[
H^s(\mathbb{R}^N) = \left\{\, \phi \in L^2(\mathbb{R}^N): \> \ [\phi]_{H^s(\mathbb{R}^N)} < +\infty \,\right\},
\]
 equipped with the norm
\begin{equation*}\label{norm}
\|\phi\|_{H(\mathbb{R}^N)} =
\left( \, \|\phi\|_{L^2(\mathbb{R}^N)}^2 + [\phi]_{H^s(\mathbb{R}^N)}^2 \, \right)^{\frac 1 2}.
\end{equation*}

Since we are interested in Dirichlet problems defined in bounded domains,  we  consider the space $X_0^s(\Omega)$, defined as 
$$
X_0^s(\Omega)=\left\{\phi \in H^s(\R^N): \> \phi=0 \mbox{ a.e. in } \R^N \setminus \Omega\right\}.
$$
When $\Omega$ is a  bounded, open set with Lipschitz boundary, it can be proven that (see \cite[Proposition B.1]{BPS}) $X_0^s(\Omega)$ coincides with the completion of $C_{0}^{\infty}(\Omega)$ with respect to the seminorm $ [\cdot]_{H^{s}(\R^{N})}$. 
\\
A consequence of fractional Poincar\'e inequality (see \cite[Lemma 2.4]{BLP}) is that we can equip the space $X_0^s(\Omega)$ with the Gagliardo seminorm
\[
\|\phi\|_{X_{0}^{s}(\Omega)}=[\phi]_{H^{s}(\R^{N})}=\left(\dfrac{\gamma(N,s)}{2}\inte_{\R^{N}\times \R^N}\frac{|\phi(x)-\phi(y)|^{2}}{|x-y|^{N+2s}}\,\di x\di y\right)^{\frac 1 2}.
\]

\noindent From the definition of $X_{0}^{s}(\Omega)$ it easily follows that for each $\phi\in X_{0}^{s}(\Omega)$
\begin{equation*}\label{norm2}
||\phi||_{X_0^s(\Omega)} = \left(\dfrac{\gamma(N,s)}{2}\inte_{Q}\frac{|\phi(x)-\phi(y)|^2}{|x-y|^{N+2s}}\di x\di y\right)^{\frac 1 2}
\end{equation*}
where $Q=\R^{2N}\setminus\left(\Omega^c \times \Omega^c\right)$ and $\Omega^c=\R^N\setminus\Omega$.\\
Then we consider the \emph{restricted} fractional Laplacian $(-\Delta|_{\Omega})_{rest}$ on $\Omega$, defined by duality on the space $X_{0}^{s}(\Omega)$. Since there will be no matter of confusion, we shall keep the classical notation $(-\Delta)^{s}$ for such operator. Moreover, denoted by $X^{-s}(\Omega)$ the dual of $X_0^s(\Omega)$, the operator $$(-\Delta)^s: X_0^s(\Omega) \to X^{-s}(\Omega)$$ is continuous. 
Finally, we recall that the following fractional Sobolev embedding holds true (see for instance \cite{BLP}).

\begin{theorem}\label{sobolev}
Let $s \in (0,1)$ and $N>2s$. There exists a positive constant $\mathcal{S}(N,s)$ such that, for any measurable and compactly supported function $\phi:\R^N \to \R$, it holds
\begin{equation*}
||\phi||_{L^{2_s^\ast}(\R^N)}^2 \le \mathcal{S}(N,s)\inte_{\R^N\times\R^N} \frac{|\phi(x)-\phi(y)|^2}{|x-y|^{N+2s}}\,\di x\di y,
\end{equation*}
where
$$
2_s^\ast=\frac{2N}{N-2s}
$$
is the critical Sobolev exponent. In particular, if $\phi \in X_0^s(\Omega)$, we have
\begin{equation} \label{emb}
||\phi||_{L^{2_s^\ast}(\Omega)}^2 \le \mathcal{S}(N,s)\inte_{\R^N \times \R^N}\frac{|\phi(x)-\phi(y)|^2}{|x-y|^{N+2s}}\,\di x\di y,
\end{equation}
that is the space $X_0^s(\Omega) $ is continuously embedded in $L^{2_s^\ast}(\Omega).$ Moreover, $X_0^s(\Omega)$ is compactly embedded in $L^q(\Omega)$, for every $1 \le q < 2_s^\ast$.
\end{theorem}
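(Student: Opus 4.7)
The plan is to first establish the continuous embedding $H^s(\R^N) \hookrightarrow L^{2_s^*}(\R^N)$ in full generality, and then to deduce both \eqref{emb} and the compactness statement for $X_0^s(\Omega)$ by extending any $\phi \in X_0^s(\Omega)$ by zero outside $\Omega$. The natural route for the continuous embedding is via Fourier analysis: by Plancherel's identity and the very choice of $\gamma(N,s)$ in \eqref{gamma}, one checks the equivalence
\begin{equation*}
\frac{\gamma(N,s)}{2}\inte_{\R^N\times\R^N}\frac{|\phi(x)-\phi(y)|^{2}}{|x-y|^{N+2s}}\,\di x\di y \;=\; \int_{\R^N}|\xi|^{2s}\,|\hat\phi(\xi)|^2\,\di\xi \;=\; \|(-\Delta)^{s/2}\phi\|_{L^2(\R^N)}^2.
\end{equation*}
Setting $g := (-\Delta)^{s/2}\phi$, one then represents $\phi = c_{N,s}\, I_s * g$, with Riesz kernel $I_s(x) = |x|^{-(N-2s)}$, and applies the Hardy--Littlewood--Sobolev inequality with exponents $(2,2_s^*)$ satisfying $\tfrac{1}{2}-\tfrac{1}{2_s^*}=\tfrac{2s}{N}$ to conclude
\begin{equation*}
\|\phi\|_{L^{2_s^*}(\R^N)} \le C(N,s)\, \|g\|_{L^2(\R^N)},
\end{equation*}
which is exactly the desired inequality with $\mathcal{S}(N,s)$ expressible in terms of the HLS constant and $\gamma(N,s)$. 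The statement for $\phi \in X_0^s(\Omega)$ follows because the zero extension has the same Gagliardo seminorm on $\R^N\times\R^N$ as on $Q$.

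For the compact embedding $X_0^s(\Omega) \hookrightarrow L^q(\Omega)$, $1\le q<2_s^*$, I would employ the Riesz--Fr\'echet--Kolmogorov criterion in $L^2(\Omega)$. Any sequence $\{\phi_n\}$ bounded in $X_0^s(\Omega)$ is automatically tight, since $\supp \phi_n\subset\overline\Omega$ with $\Omega$ bounded, so one only needs equicontinuity of translations in $L^2$. The key estimate
\begin{equation*}
\int_{\R^N}|\phi_n(x+h)-\phi_n(x)|^2\,\di x \;\le\; C(N,s)\,[\phi_n]_{H^s(\R^N)}^2\,|h|^{2s}, \qquad |h| \text{ small},
\end{equation*}
is obtained by splitting the Gagliardo double integral according to whether $|x-y|$ is smaller or larger than $|h|$ and estimating each piece directly. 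This yields precompactness in $L^2(\Omega)$, which is then upgraded to precompactness in $L^q(\Omega)$ for every $1\le q<2_s^*$ by interpolating the $L^2$-convergence against the uniform $L^{2_s^*}$-bound provided by the continuous embedding just established.

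The main obstacle is the Hardy--Littlewood--Sobolev inequality itself, a classical but nontrivial ingredient which I would invoke as a black box; a more elementary alternative would combine a P\'olya--Szeg\H{o} principle for the fractional Gagliardo seminorm (\emph{cf.} \Cref{PS} in the local case, extended to $s\in(0,1)$ by a standard symmetrization argument on the nonlocal energy) with a one-dimensional Hardy-type estimate on radially decreasing profiles, at the cost of a less transparent constant. Since only the existence of some $\mathcal{S}(N,s)>0$ and the dependence of the embedding constants on $N,s$ are used in what follows, pinning down the sharp value is not needed here.
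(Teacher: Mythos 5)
The paper does not prove this theorem at all: it is recalled as a known result with a citation (``see for instance \cite{BLP}''; the standard references \cite{DPV,FKV} contain proofs), so any correct self-contained argument is necessarily ``different from the paper''. Your route is the classical one — Plancherel to identify the Gagliardo seminorm with $\||\xi|^{s}\hat\phi\|_{L^2}^2$ (the normalization \eqref{gamma} is chosen exactly so that this holds), Riesz-potential representation plus Hardy--Littlewood--Sobolev for the continuous embedding, and Kolmogorov--Riesz--Fr\'echet with the translation estimate $\|\phi(\cdot+h)-\phi\|_{L^2}^2\le C|h|^{2s}[\phi]_{H^s}^2$ plus interpolation with the uniform $L^{2_s^\ast}$ bound for the compact embedding into $L^q(\Omega)$, $1\le q<2_s^\ast$ — and it is sound; invoking HLS as a black box is perfectly acceptable here, since only the existence of $\mathcal S(N,s)$ is used later. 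The passage from $\phi\in X_0^s(\Omega)$ to \eqref{emb} is indeed immediate, as the zero extension is built into the definition of $X_0^s(\Omega)$ and the right-hand side of \eqref{emb} is already the full $\R^N\times\R^N$ integral.

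One bookkeeping slip should be corrected: with $g:=(-\Delta)^{s/2}\phi$ the representation is $\phi=c_{N,s}\,|x|^{-(N-s)}\ast g$ (Riesz potential of order $s$, not $2s$), and the HLS exponent relation is $\tfrac12-\tfrac1{2_s^\ast}=\tfrac{s}{N}$, not $\tfrac{2s}{N}$. As written, the kernel $|x|^{-(N-2s)}$ paired with $g=(-\Delta)^{s/2}\phi$ and the stated exponent relation do not match and would produce the wrong target exponent; either convolve $|x|^{-(N-2s)}$ with $(-\Delta)^{s}\phi$ or (cleaner, since it is the $L^2$ norm of $g$ you control) keep $g=(-\Delta)^{s/2}\phi$ and use the order-$s$ kernel. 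This is a fixable slip in constants and exponents, not a gap in the strategy.
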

For more details on fractional Sobolev spaces and nonlocal operators we refer the interested reader  to \cite{FKV,R}.

Now, we recall that the radially decreasing rearrangement of a Sobolev function is a Sobolev function and that the fractional Gagliardo seminorm does not increase under rearrangement. The following proposition can be seen as the nonlocal counterpart of the P\'olya-Szeg\H o principle recalled in Proposition \ref{PS} (see \cite[Theorem 9.2]{AL}, see also \cite[Theorem A.1]{FS}).

\begin{proposition}\label{prop_polya}
For any $\phi \in H^{s}(\mathbb{R}^N)$, the following inequality holds true
\begin{equation}
    \label{eq_polya}
\inte_{\mathbb{R}^N \times \mathbb{R}^N} 
\frac{|\phi(x) - \phi(y)|^2}{|x-y|^{N+2s}} \, dx \, dy
\;\ge\;
\inte_{\mathbb{R}^N \times \mathbb{R}^N} 
\frac{|\phi^\sharp(x) - \phi^\sharp(y)|^2}{|x-y|^{N+2s}} \, dx \, dy.
\end{equation}
The equality sign in \eqref{eq_polya} is achieved if and only if $\phi$ is proportional to a (translation of a) radially symmetric, decreasing function.
\end{proposition}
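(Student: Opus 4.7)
The plan is to reduce the fractional Pólya--Szegő inequality to Riesz's rearrangement inequality via the Bochner subordination formula, which represents the Gagliardo seminorm as a weighted integral against the heat semigroup. Starting from the subordination identity
\[
|\xi|^{2s} \;=\; \frac{s}{\Gamma(1-s)}\int_0^\infty t^{-1-s}\bigl(1-e^{-t|\xi|^2}\bigr)\, \di t,
\]
Plancherel's theorem, together with the elementary identity
\[
\|\phi\|_{L^2(\R^N)}^2 - \langle \phi, P_t\phi\rangle \;=\; \tfrac{1}{2}\inte_{\R^N\times\R^N}|\phi(x)-\phi(y)|^2\, p_t(x-y)\, \di x\, \di y,
\]
where $p_t(z)=(4\pi t)^{-N/2}\exp(-|z|^2/(4t))$ is the Gaussian heat kernel and $P_t\phi:=p_t\ast\phi$, yields (for an explicit positive $c(N,s)$)
\[
\inte_{\R^N\times \R^N}\frac{|\phi(x)-\phi(y)|^2}{|x-y|^{N+2s}}\, \di x\, \di y
\;=\; c(N,s)\int_0^\infty t^{-1-s}\bigl(\|\phi\|_{L^2}^2 - \langle\phi,P_t\phi\rangle\bigr)\, \di t.
\]
The first step in the proof would be to verify this representation rigorously, initially for Schwartz functions (where everything is absolutely convergent) and then extending to $\phi\in H^s(\R^N)$ by density.

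The second and decisive step is a pointwise-in-$t$ comparison. For each $t>0$ the Gaussian $p_t$ is nonnegative, even, and strictly symmetric decreasing, hence $p_t^\sharp=p_t$. Since the kernel is nonnegative, $\langle \phi, P_t\phi\rangle \le \langle |\phi|, P_t|\phi|\rangle$; then Riesz's rearrangement inequality applied to the triple $(|\phi|,p_t,|\phi|)$ gives
\[
\langle |\phi|,P_t|\phi|\rangle \;\le\; \inte_{\R^N\times\R^N}|\phi|^\sharp(x)\, p_t(x-y)\, |\phi|^\sharp(y)\, \di x\, \di y \;=\; \langle \phi^\sharp, P_t\phi^\sharp\rangle,
\]
because $|\phi|^\sharp=\phi^\sharp$ by the very definition of the Schwarz rearrangement used in the paper. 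Combining this with $\|\phi\|_{L^2}=\|\phi^\sharp\|_{L^2}$, one obtains for every $t>0$
\[
\|\phi\|_{L^2}^2-\langle\phi,P_t\phi\rangle \;\ge\; \|\phi^\sharp\|_{L^2}^2-\langle\phi^\sharp,P_t\phi^\sharp\rangle,
\]
and integration against the strictly positive weight $t^{-1-s}$ yields \eqref{eq_polya}.

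For the equality case, equality in \eqref{eq_polya} forces equality in the Riesz inequality for almost every $t>0$. Since $p_t$ is \emph{strictly} symmetric decreasing, the sharp rigidity form of Riesz's rearrangement inequality (Burchard--Lieb) applies cleanly and forces $\phi$ to coincide, up to sign and translation, with its own symmetric decreasing rearrangement, i.e.\ $\phi$ is proportional to a translate of a radially symmetric, decreasing function. The main technical obstacle is precisely the equality statement: unlike the inequality itself, it requires invoking the full sharp form of Riesz's rigidity rather than the elementary inequality, and one must justify that equality a.e.\ in the $t$-integral propagates to equality for all (or almost all) $t$ and hence to the geometric rigidity of $\phi$.
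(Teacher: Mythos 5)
Your proposal is essentially correct, but note that the paper does not prove \Cref{prop_polya} at all: the statement is simply quoted from the literature (\cite[Theorem 9.2]{AL} and \cite[Theorem A.1]{FS}), so there is no in-paper argument to match; what you wrote reconstructs the standard proof that underlies those references. The subordination identity, the heat-kernel identity for $\|\phi\|_{L^2}^2-\langle\phi,P_t\phi\rangle$, and the pointwise-in-$t$ use of Riesz's rearrangement inequality with the Gaussian (which coincides with its own symmetric decreasing rearrangement) are all sound and give \eqref{eq_polya}; the cited sources reach the same reduction to Riesz through a layer-cake decomposition valid for general convex integrands and general symmetric decreasing kernels, so your Fourier/heat-semigroup route is slightly less general but cleaner for the quadratic functional actually needed here. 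Two points are worth tightening. First, the density step is unnecessary and, as phrased, slightly delicate: since $\int_0^\infty t^{-1-s}p_t(z)\,\di t = C(N,s)\,|z|^{-N-2s}$ pointwise and every integrand involved is nonnegative, Tonelli's theorem gives the integral representation for \emph{every} $\phi\in L^2(\R^N)$, with both sides possibly infinite; this also lets you apply the representation to $\phi^\sharp$ without knowing beforehand that $\phi^\sharp\in H^s(\R^N)$. Second, in the equality case you are chaining two inequalities, $\langle\phi,P_t\phi\rangle\le\langle|\phi|,P_t|\phi|\rangle$ and Riesz: equality in the first (the Gaussian kernel being strictly positive) forces $\phi$ to have a.e.\ constant sign, while for the second it suffices to pick a single $t$ at which the nonnegative deficit vanishes (such $t$ exist because the deficit integrates to zero against the strictly positive weight $t^{-1-s}$) and invoke the strict-kernel rigidity of Riesz's inequality (Lieb; see also Burchard) to conclude that $|\phi|$ is a translate of $\phi^\sharp$; combining the two observations yields exactly the characterization stated in the proposition. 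With these clarifications your argument is complete and self-contained.
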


\subsection{The Fractional Eigenvalue Problem}

Let $\Omega\subset\R^N$, $N\geq 2$, be a bounded open set having Lipschitz boundary.
We consider the nonlocal eigenvalue problem \eqref{P},
whose weak formulation reads as
\begin{equation}\label{P1}
\left\{
\begin{array}{ll}
\dfrac{\gamma(N,s)}{2}\displaystyle\inte_{\R^N \times \R^N} \frac{\left(u(x)-u(y)\right)\left(\varphi(x)-\varphi(y)\right)}{|x-y|^{N+2s}}\,\di x\di y =\lambda \displaystyle\int_\Omega u(x)\varphi(x)\, \di x, \qquad \varphi \in X_0^s(\Omega),
\\ \\
u \in X_0^s(\Omega).
\end{array}
\right.
\end{equation}
We recall that $\lambda\in \R$ is called an eigenvalue if there exists a nontrivial solution $u \in X_0^s(\Omega)$ to \eqref{P1} and, in this case, any solution is called an eigenfunction corresponding to the eigenvalue $\lambda$. It is well-known (see, for example, \cite{SV13}) that:
\smallskip

\noindent 1) problem \eqref{P1} admits the smallest eigenvalue $\lambda_1(\Omega)$ which is positive and that can be characterized as follow
\begin{equation}\label{eigen}
\lambda_1(\Omega)=\min_{\xi\in X_0^s(\Omega)\setminus\{0\}}
\frac{ [\xi]^2_{H^s(\R^N)} }{||\xi||_{L^2(\Omega)}^2};
\end{equation}
\smallskip

\noindent 2) there exists a positive function $\mathtt u_1\in X_0^s(\Omega)$, which is an eigenfunction corresponding to $\lambda_1(\Omega)$, attaining the minimum in \eqref{eigen};
\smallskip

\noindent 3) $\lambda_1(\Omega)$ is simple, that is, if $\mathtt u \in X_0^s(\Omega)$ is a solution to the following equation
$$
\dfrac{\gamma(N,s)}{2}\displaystyle\inte_{\R^N \times \R^N} \frac{\left(u(x)-u(y)\right)\left(\varphi(x)-\varphi(y)\right)}{|x-y|^{N+2s}}\,\di x\di y =\lambda_1(\Omega) \displaystyle\int_\Omega u(x)\varphi(x)\, \di x, \qquad \varphi \in X_0^s(\Omega)
$$
then $\mathtt u=\alpha \mathtt u_1$, with $\alpha \in \R$;
\smallskip

\noindent 4)  $\lambda_1(\Omega)$ is monotone decreasing with respect to the inclusion of sets, that is, if $\Omega' \subset \Omega$, then $\lambda_1(\Omega')\ge \lambda_1(\Omega)$. Moreover, it scales under dilation as follows:
\begin{equation}\label{eigen-scal}
\lambda_1(t\Omega)=t^{-2s}\lambda_1(\Omega), \quad t>0.
\end{equation}

Using the Sobolev inequality contained in Theorem \ref{sobolev}, we can immediately derive the existence of a positive constant $C=C(N,s)$ such that
$$
\lambda_1(\Omega) \ge C\,|\Omega|^{-\frac{2s}{N}}.
$$

\begin{remark}\label{regularity}
By standard arguments, we can show that any eigenfunction is bounded and smooth inside $\Omega$. We start by knowing $\mathtt u_1\in L^{p}$ with $p=2^{\ast}_{s}<N/2s$ by the fractional Sobolev embedding \eqref{emb}. Then we use \cite[Th. 3.2]{FV} with $f=\lambda \mathtt u_1$ in order to get $\mathtt u_1\in L^{q}$ with $q=2N/(N-6s)>2^{\ast}_{s}$. Bootstrapping, after a finite number $k$ of steps we have that $\mathtt u_1\in L^{q_{k}}$ with $q_{k}>N/2s$. Thus \cite[Th. 3.2]{FV} again gives $\mathtt u_1\in L^{\infty}(\Omega)$. Now using \cite[Theorem 2.4.1, Proposition 2.4.4]{FRR} or \cite[Theorem 1.1]{RSStable} we have that $\mathtt u_{1}\in C^{\alpha}_{loc}(\Omega)$ for some $\alpha=\alpha(s)$. Hence, $f=\lambda\mathtt {u}_{1}\in C^{\alpha}_{loc}(\Omega)$ and the Schauder regularity gives $\mathtt {u}_{1}\in C^{\alpha+2s}_{loc}(\Omega)$ when $\alpha+2s\not\in \mathbb{N}$. Bootstrapping, after a finite number of steps $\mathtt u_1\in C^{\infty}(\Omega)$.
\end{remark}

The fractional Faber-Krahn inequality stated in the following theorem says that the optimal value of the constant
$C(N,s)$ is attained when $\Omega$ is a ball. 
To the best of our knowledge, the proof of the Faber-Krahn inequality can be found in \cite[Theorem 3.5]{BLP}. Nonetheless, it essentially builds upon the P\'olya-Szeg\H o principle for the Gagliardo seminorm stated in \Cref{prop_polya}.
\begin{proposition}\label{FK_prop}
Let $\Omega\subset\R^N$ be a bounded, open set having Lipschitz boundary. Then
\begin{equation}\label{FK}
\lambda_1(\Omega) \geq \lambda_1(\Omega^\sharp)\qquad\mbox{ where } \Omega^\sharp \mbox{ is the ball (centered at the origin) s.t.} \ \ |\Omega^\sharp| = |\Omega|.
\end{equation}
Equality holds if and only if \(\Omega\) is a ball.
\end{proposition}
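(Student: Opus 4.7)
The plan is to mimic the classical Faber--Krahn proof via Schwarz symmetrization, using the fractional P\'olya--Szeg\H{o} principle of Proposition \ref{prop_polya} as the essential replacement for its local counterpart and the variational characterization \eqref{eigen} as the entry point.

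First I would take a positive first eigenfunction $\mathtt{u}_1 \in X_0^s(\Omega)$ realizing the minimum in \eqref{eigen}; its existence and positivity are recorded in points 1)--2) immediately after \eqref{eigen}. Since $\mathtt{u}_1 = 0$ a.e.\ in $\mathbb{R}^N \setminus \Omega$, the distribution function of $|\mathtt{u}_1|$ is supported in $[0, |\Omega|]$, so its Schwarz rearrangement $\mathtt{u}_1^\sharp$ vanishes a.e.\ outside $\Omega^\sharp$. By Proposition \ref{prop_polya}, $\mathtt{u}_1^\sharp \in H^s(\mathbb{R}^N)$ with
\begin{equation*}
[\mathtt{u}_1^\sharp]_{H^s(\mathbb{R}^N)}^2 \leq [\mathtt{u}_1]_{H^s(\mathbb{R}^N)}^2,
\end{equation*}
and rearrangement preserves the $L^2$ norm, so $\|\mathtt{u}_1^\sharp\|_{L^2(\Omega^\sharp)} = \|\mathtt{u}_1\|_{L^2(\Omega)}$. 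In particular $\mathtt{u}_1^\sharp \in X_0^s(\Omega^\sharp)$ and is admissible in the Rayleigh quotient for $\Omega^\sharp$; testing \eqref{eigen} on $\mathtt{u}_1^\sharp$ yields
\begin{equation*}
\lambda_1(\Omega^\sharp) \leq \frac{[\mathtt{u}_1^\sharp]_{H^s(\mathbb{R}^N)}^2}{\|\mathtt{u}_1^\sharp\|_{L^2(\Omega^\sharp)}^2} \leq \frac{[\mathtt{u}_1]_{H^s(\mathbb{R}^N)}^2}{\|\mathtt{u}_1\|_{L^2(\Omega)}^2} = \lambda_1(\Omega),
\end{equation*}
which is exactly \eqref{FK}.

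For the equality case, the plan is to feed equality back into Proposition \ref{prop_polya}. If $\lambda_1(\Omega) = \lambda_1(\Omega^\sharp)$, the chain of inequalities above collapses and forces equality in \eqref{eq_polya} applied to $\mathtt{u}_1$; the rigidity clause of Proposition \ref{prop_polya} then identifies $\mathtt{u}_1$ with a translate of a radially symmetric decreasing function, and since $\mathtt{u}_1 > 0$ in $\Omega$ while vanishing outside, its essential support is an open ball, which together with the Lipschitz assumption on $\partial\Omega$ pins down $\Omega$ as a ball. The argument is largely mechanical once the fractional P\'olya--Szeg\H{o} principle is in hand, so the only point I would check carefully is the membership $\mathtt{u}_1^\sharp \in X_0^s(\Omega^\sharp)$ (support plus finite Gagliardo seminorm, both already supplied by Proposition \ref{prop_polya}); I do not expect a genuine obstacle beyond invoking that proposition and its rigidity statement.
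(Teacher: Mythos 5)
Correct: your symmetrization argument (testing \eqref{eigen} with $\mathtt u_1^\sharp$, invoking Proposition \ref{prop_polya} for the seminorm inequality and its rigidity clause for the equality case) is exactly the approach the paper indicates, since the paper gives no separate proof but refers to \cite[Theorem 3.5]{BLP} and notes that the proof builds precisely on the P\'olya--Szeg\H o principle of Proposition \ref{prop_polya}. The only step you compress --- that an open set with Lipschitz boundary coinciding a.e.\ with a ball of the same measure must be that ball --- is routine and poses no obstacle.
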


\begin{remark}
Unlike the first eigenvalue, for the second eigenvalue of the fractional Dirichlet Laplacian  an optimal shape under volume constraints is not known. For example, in  \cite{BP} the authors show that a minimizing sequence is given by two disjoint balls each of volume \(|\Omega|/2\) whose mutual distance tends to infinity.
\end{remark}

We end this subsection by recalling the following result on eigenvalues of balls contained in \cite{D}.

\begin{proposition}
Let $\lambda_\ast$ be the smallest number such that there exists an eigenfunction $\phi_\ast$ of the fractional Dirichlet-Laplacian in the unitary ball $B_1 $ in $ \R^N$ which is antisymmetric, i.e. $\phi_\ast(-x)=-\phi_\ast(x)$, and has eigenvalue $\lambda_\ast$.
Then 
$$\lambda_\ast=\lambda_{1,N+2}(B_1),$$
where $\lambda_{1,N+2}(B_1)$ is the first eigenvalue of the unitary ball in $\R^{N+2}$.
\end{proposition}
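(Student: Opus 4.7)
The plan is to combine a spherical-harmonic decomposition of the eigenspaces with the explicit eigenvalue formula for $(-\Delta)^{s}$ on balls established in \cite{D}. Since $(-\Delta)^{s}$ commutes with rotations, the eigenspaces of the restricted fractional Dirichlet Laplacian on $B_{1}\subset\mathbb{R}^{N}$ are invariant under the orthogonal group $O(N)$ and decompose into angular sectors spanned by functions of the form $R(|x|)\,Y_{l}(x/|x|)$, where $Y_{l}$ is a solid spherical harmonic of degree $l\in\mathbb{N}_{0}$. Because $Y_{l}(-\omega)=(-1)^{l}Y_{l}(\omega)$, any antisymmetric eigenfunction $\phi_{\ast}$ lies in the closure of the span of the sectors with odd $l\geq 1$, so its eigenvalue coincides with some $\lambda_{l,k}(N,s)$, with $l$ odd and $k\in\mathbb{N}_{0}$ parametrizing the radial excitation within the fixed angular sector.

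Next, I would invoke the explicit formula derived in \cite{D}, namely
$$
\lambda_{l,k}(N,s)\;=\;\frac{2^{2s}\,\Gamma(1+s+k)\,\Gamma\!\left(\tfrac{N}{2}+s+l+k\right)}{\Gamma(1+k)\,\Gamma\!\left(\tfrac{N}{2}+l+k\right)},
$$
with associated eigenfunctions of the form $(1-|x|^{2})^{s}_{+}\,p_{l,k}(|x|^{2})\,Y_{l}(x/|x|)$ for suitable Jacobi-type polynomials $p_{l,k}$. A direct inspection yields the key \emph{dimensional shift}
$$
\lambda_{l,k}(N,s)\;=\;\lambda_{0,k}(N+2l,s),
$$
so that the eigenvalues in the $l$-th angular sector on $B_{1}\subset\mathbb{R}^{N}$ coincide with the purely radial eigenvalues on $B_{1}\subset\mathbb{R}^{N+2l}$. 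This is the nonlocal analogue of the classical observation that the substitution $u(x)=x_{j}\,R(|x|)$ converts the radial Laplacian in $\mathbb{R}^{N}$ into the radial Laplacian in $\mathbb{R}^{N+2}$.

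Finally, from the explicit formula the map $(l,k)\mapsto\lambda_{l,k}(N,s)$ is strictly increasing in each argument, since the monotonicity of the digamma function gives that $t\mapsto\Gamma(t+s)/\Gamma(t)$ is strictly increasing on $(0,\infty)$ for every $s\in(0,1)$. Hence the infimum over antisymmetric modes (those with odd $l\geq 1$) is attained at $l=1$, $k=0$, giving
$$
\lambda_{\ast}\;=\;\lambda_{1,0}(N,s)\;=\;\lambda_{0,0}(N+2,s)\;=\;\lambda_{1,N+2}(B_{1}),
$$
as claimed. The main obstacle in this plan is the dimensional-shift identity itself: although it parallels a one-line computation for the classical Laplacian, the nonlocality of $(-\Delta)^{s}$ prevents any direct radial-ODE reduction, and one must rely on Dyda's delicate analysis of how $(-\Delta)^{s}$ acts on products of $(1-|x|^{2})^{s}_{+}$ with Jacobi polynomials and spherical harmonics.
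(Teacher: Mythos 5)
The paper itself offers no proof of this proposition --- it is simply recalled from Dyda's paper \cite{D} --- so your argument must stand on its own, and it has a genuine gap at its central step. The formula you invoke,
\[
\lambda_{l,k}(N,s)=\frac{2^{2s}\,\Gamma(1+s+k)\,\Gamma\!\left(\frac N2+s+l+k\right)}{\Gamma(1+k)\,\Gamma\!\left(\frac N2+l+k\right)},
\]
with ``associated eigenfunctions'' $(1-|x|^2)^s_+\,p_{l,k}(|x|^2)\,Y_l(x/|x|)$, is \emph{not} the Dirichlet spectrum of $(-\Delta)^s$ on $B_1$. What Dyda actually computes is
\[
(-\Delta)^s\Bigl[(1-|x|^2)^s_+\,p_{l,k}(|x|^2)\,Y_l(x)\Bigr]=\lambda_{l,k}\,p_{l,k}(|x|^2)\,Y_l(x)\qquad\text{in } B_1,
\]
i.e.\ the factor $(1-|x|^2)^s_+$ is missing on the right-hand side: these functions solve the \emph{weighted} problem $(-\Delta)^s u=\lambda\,(1-|x|^2)^{-s}u$ in $B_1$, $u=0$ outside, not problem \eqref{P}. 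No closed form is known for the Dirichlet eigenvalues of the fractional Laplacian on a ball (this is exactly why Dyda, and later Dyda--Kuznetsov--Kwa\'snicki, obtain only two-sided numerical bounds, using the above family as a Rayleigh--Ritz basis). Consequently your dimensional-shift identity $\lambda_{l,k}(N,s)=\lambda_{0,k}(N+2l,s)$ and the digamma-monotonicity step are true statements about the wrong spectrum, and the labelling of the eigenvalue of an antisymmetric Dirichlet eigenfunction by such a pair $(l,k)$ is unfounded; the chain $\lambda_\ast=\lambda_{1,0}(N,s)=\lambda_{0,0}(N+2,s)$ therefore does not prove the claim.

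What is sound in your outline is the rotational-sector decomposition and the dimensional-shift philosophy, but the shift must be implemented at the level of the operator rather than of a (nonexistent) explicit spectrum. By the Hecke--Bochner-type identity used in \cite{D} (the same mechanism that produces the operator $(-\Delta)^s_{\R^{N+2}}$ acting on the averaged radial profiles in the proof of Theorem \ref{thm_comp}, via \cite{FV}), for a solid harmonic $V_l$ of degree $l$ and a radial profile $f$ one has $(-\Delta)^s_{\R^{N}}\bigl[V_l\,f\bigr]=V_l\,(-\Delta)^s_{\R^{N+2l}}f$, with $f$ read as a radial function on $\R^{N+2l}$. This identifies the Dirichlet spectrum of $B_1\subset\R^N$ in the $l$-th angular sector with the radial Dirichlet spectrum of $B_1\subset\R^{N+2l}$, whose minimum is $\lambda_{1,N+2l}(B_1)$ since the ground state of a ball is radial. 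Taking $l=1$ and the radial ground state $\phi$ of $B_1\subset\R^{N+2}$ produces the antisymmetric eigenfunction $x_1\phi(|x|)$, so $\lambda_\ast\le\lambda_{1,N+2}(B_1)$; conversely, an antisymmetric eigenfunction has only odd-$l$ components, whence $\lambda_\ast=\min_{l\ \mathrm{odd}}\lambda_{1,N+2l}(B_1)$, and the monotonicity of $\lambda_{1,M}(B_1)$ in $M$ (obtained from the same sector inclusion, not from a formula) yields $\lambda_\ast=\lambda_{1,N+2}(B_1)$. To repair your proof you must replace the explicit-formula step by this operator identity, together with the bookkeeping that matches the form domains $X_0^s$ across dimensions.
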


\begin{remark}\label{dydarem}
As a consequence, we immediately get that the first eigenvalue of the fractional Dirichlet-Laplacian on balls is increasing with respect to the dimension $N$.
\end{remark}

\subsection{The Fractional Torsional Rigidity}
The fractional torsional rigidity of $\Omega$ has been defined in \eqref{tor}. 
It can be easily seen that the maximum in \eqref{tor} is attained at a unique function \(\mathtt{v} \in H_0^s(\Omega)\), which solves the fractional torsion problem
\begin{equation}
    \label{tor_pb}
\begin{cases}
(-\Delta)^s v = 1 & \text{in } \Omega, \\
v = 0 & \text{in } \mathbb{R}^N \setminus \Omega,
\end{cases}
\end{equation}
whose weak formulation reads as 
\[
\frac{\gamma(N,s)}{2}\inte_{\R^N\times \R^N}
\frac{\bigl( v(x)- v(y)\bigr)\bigl(\varphi(x)-\varphi(y)\bigr)}
{|x-y|^{N+2s}}\,\di x \di y
=\int_\Omega \varphi(x)\,\di x,
\qquad \varphi\in X_0^s(\Omega).
\]
Obviously, the value of the maximum in \eqref{tor}  can be equivalently expressed as
\[
T(\Omega) = \int_\Omega \mathtt{v}(x) \, \di x.
\]
As for the first eigenvalue, it is easy to verify that the torsional rigidity scales under dilation as 
\begin{equation}\label{tor-scal}
T(t\Omega)=t^{N+2s}T(\Omega), \qquad t>0.
\end{equation}
To the best of our knowledge, the Saint-Venant inequality in the nonlocal setting has not been explicitly stated, and it has so far been established only in the particular context of random walk spaces (see \cite{MT}).  

The proof, similarly to the one of the Faber-Krahn inequality \eqref{FK}, essentially relies on the P\'olya-Szeg\H o principle for the Gagliardo seminorm stated in Proposition \ref{prop_polya}.
\begin{proposition}
Let $\Omega \subset \mathbb R^N$ be a bounded, open set having Lipschitz boundary. Then
\begin{equation}\label{SV}
T(\Omega) \leq T(\Omega^\sharp), \qquad \mbox{ where } \Omega^\sharp \mbox{ is the ball (centered at the origin) s.t. } |\Omega^\sharp| = |\Omega|.
\end{equation}
Equality holds if and only if \(\Omega\) is a ball.
\end{proposition}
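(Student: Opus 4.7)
The plan is to apply the Pólya--Szegő principle for the Gagliardo seminorm (Proposition \ref{prop_polya}) to the fractional torsion function, exploiting the variational characterization \eqref{tor}. Let $\mathtt v\in X_0^s(\Omega)$ denote the fractional torsion function, which solves \eqref{tor_pb} and realizes the maximum in \eqref{tor}. Because $(-\Delta)^s \mathtt v=1>0$ in $\Omega$, the maximum principle for the restricted fractional Laplacian gives $\mathtt v\ge 0$ in $\R^N$ (and in fact $\mathtt v>0$ in $\Omega$ by the strong maximum principle). Therefore we may rewrite
\[
T(\Omega)=\int_\Omega \mathtt v(x)\,\di x=\frac{\left(\int_\Omega \mathtt v\,\di x\right)^2}{[\mathtt v]_{H^s(\R^N)}^2},
\]
and the strategy is to feed $\mathtt v^\sharp$ into the definition of $T(\Omega^\sharp)$ as a competitor.

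The key steps are as follows. First, I check that $\mathtt v^\sharp\in X_0^s(\Omega^\sharp)$: since $\supp\mathtt v\subseteq\overline\Omega$, the set $\{\mathtt v>0\}$ has measure at most $|\Omega|=|\Omega^\sharp|$, hence $\supp \mathtt v^\sharp\subseteq\overline{\Omega^\sharp}$, while $\mathtt v^\sharp\in H^s(\R^N)$ is exactly the content of Proposition \ref{prop_polya}. Second, equimeasurability of the rearrangement preserves the $L^1$-norm,
\[
\int_{\Omega^\sharp}\mathtt v^\sharp(x)\,\di x=\int_\Omega \mathtt v(x)\,\di x,
\]
while \eqref{eq_polya} yields $[\mathtt v^\sharp]_{H^s(\R^N)}\le [\mathtt v]_{H^s(\R^N)}$. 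Using $\mathtt v^\sharp$ as a test function in \eqref{tor} applied to $\Omega^\sharp$ gives
\[
T(\Omega^\sharp)\ge \frac{\left(\int_{\Omega^\sharp}\mathtt v^\sharp\,\di x\right)^2}{[\mathtt v^\sharp]_{H^s(\R^N)}^2}\ge \frac{\left(\int_\Omega \mathtt v\,\di x\right)^2}{[\mathtt v]_{H^s(\R^N)}^2}=T(\Omega),
\]
which is the desired inequality \eqref{SV}.

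For the equality case, if $T(\Omega)=T(\Omega^\sharp)$, both inequalities in the chain above are saturated; in particular $[\mathtt v^\sharp]_{H^s(\R^N)}=[\mathtt v]_{H^s(\R^N)}$. The equality statement of Proposition \ref{prop_polya} then forces $\mathtt v$ to be, up to translation, radially symmetric and decreasing. Combined with the strong maximum principle, which gives $\{\mathtt v>0\}=\Omega$, this identifies $\Omega$ with a ball. The main (mild) obstacle here is to justify the rigidity step cleanly: one must combine the equality characterization in the Pólya--Szegő principle, which concerns the \emph{function} $\mathtt v$, with the strict positivity of $\mathtt v$ on $\Omega$ in order to conclude that $\Omega$ itself, and not just the superlevel sets of $\mathtt v$, is a ball; the strong maximum principle for $(-\Delta)^s$ is the needed input.
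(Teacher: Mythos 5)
Your argument is correct and coincides with the proof the paper intends: the paper itself only remarks that the inequality ``essentially relies on the P\'olya--Szeg\H o principle for the Gagliardo seminorm'' of Proposition \ref{prop_polya}, which is exactly your scheme of plugging $\mathtt v^\sharp$ into the variational characterization \eqref{tor} on $\Omega^\sharp$ and invoking the rigidity statement for the equality case. The only point you gloss over (harmless given the Lipschitz assumption on $\partial\Omega$) is passing from $\{\mathtt v>0\}$ being a ball up to a null set to $\Omega$ itself being a ball.
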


We mention here \cite{O}, treating the fractional version of the torsional rigidity on graphs. We also mention that in \cite{CDPPV} symmetry and quantitative stability results for the parallel surface fractional torsion problem have been established.

\vskip 0.5cm

When $\Omega=B_1$, the following explicit expression for the unique solution $\bar{\mathtt v}$ to \eqref{tor_pb} has been provided in \cite{D}:  
\begin{equation}
    \label{tor_sol}
\bar{\mathtt v}(x) =   
\dfrac{\Gamma\left(\frac N2\right)}{4^{s}\,\Gamma(1+s)\,\Gamma\left(\frac{N+2s}{2}\right)} (1-|x|^2)^s_+. 
\end{equation}
We prove the following
\begin{lemma}\label{Lemma3.1}
Let $\bar{\mathtt v}$ be defined as in \eqref{tor_sol}, then 
\begin{equation}
    \label{upp_v}
\bar{\mathtt v}(x) \le 
\begin{cases}
\dfrac{1}{\Gamma(x_0)}  & \text{ if } N=1\\
1 & \text{ if } N\ge 2,
\end{cases}
\end{equation}
where
$$
\Gamma(x_0)=\min_{x \in [1,3]}\Gamma(x).
$$
\end{lemma}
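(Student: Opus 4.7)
\medskip

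\noindent\textbf{Proof plan.} The starting observation is that $(1-|x|^2)^s_+ \le 1$ for every $x\in\mathbb{R}^N$, hence
\[
\bar{\mathtt v}(x) \;\le\; C(N,s) \;:=\; \frac{\Gamma(N/2)}{4^{s}\,\Gamma(1+s)\,\Gamma\!\left(\tfrac{N+2s}{2}\right)}.
\]
The whole game therefore reduces to estimating $C(N,s)$, which I would do case-by-case.

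\smallskip

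\noindent\emph{Case $N=1$.} I would apply Legendre's duplication formula $\Gamma(z)\Gamma(z+\tfrac12)=2^{1-2z}\sqrt{\pi}\,\Gamma(2z)$ with $z=s+\tfrac12$, which yields
\[
\Gamma(1+s)\,\Gamma(\tfrac12+s)\;=\;\frac{\sqrt{\pi}}{4^{s}}\,\Gamma(1+2s).
\]
Substituting into $C(1,s)$ and using $\Gamma(1/2)=\sqrt{\pi}$ produces the clean identity $C(1,s)=1/\Gamma(1+2s)$. Since $s\in(0,1)$, the argument $1+2s$ lies in $(1,3)$, and by the very definition of $\Gamma(x_0)$ one has $\Gamma(1+2s)\ge \Gamma(x_0)$, giving the $N=1$ bound.

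\smallskip

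\noindent\emph{Case $N\ge 2$.} I aim to prove the cleaner bound $C(N,s)\le 1$. The key auxiliary fact I would establish first is the estimate
\[
a^{s}\,\Gamma(1+s)\;\ge\; 1 \qquad \text{for all } s\in[0,1] \text{ and } a\ge 2.
\]
This is done by taking logarithms: denoting $g_a(s)=s\log a+\log\Gamma(1+s)$, one has $g_a(0)=0$ and $g_a'(s)=\log a+\psi(1+s)\ge \log 2-\gamma>0$, since $\psi$ is increasing and $\psi(1)=-\gamma$. Hence $g_a(s)\ge 0$, i.e.\ $a^{s}\Gamma(1+s)\ge 1$.

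For $N=2$, the expression collapses to $C(2,s)=[2^{s}\Gamma(1+s)]^{-2}$, so the auxiliary estimate with $a=2$ gives $C(2,s)\le 1$. For $N\ge 3$, I would use the monotonicity of $\Gamma$ on $[3/2,+\infty)$ (the unique positive minimum of $\Gamma$ is located near $1.4616<3/2$), which gives $\Gamma(N/2+s)\ge \Gamma(N/2)$ for every $s\ge 0$; consequently
\[
C(N,s)\;\le\;\frac{1}{4^{s}\,\Gamma(1+s)}\;\le\; 1,
\]
by the auxiliary estimate with $a=4$.

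\smallskip

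\noindent\emph{Main obstacle.} The computation itself is entirely elementary once the right identities are in place; there is no deep obstacle. The only real care needed is in the $N=1$ case, where the target constant is $1/\Gamma(x_0)$ rather than $1$: here the duplication formula is essential, because the naive bound $C(1,s)\le 1$ is false (for instance $C(1,0)=1$, while for small $s>0$, $\Gamma(1+2s)<1$ forces $C(1,s)>1$). It is precisely this sharp reduction $C(1,s)=1/\Gamma(1+2s)$ that dictates the appearance of the minimum of $\Gamma$ on $[1,3]$ in the statement.
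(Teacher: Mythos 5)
Your proposal is correct. The $N=1$ case is exactly the paper's argument: the duplication formula with $z=s+\tfrac12$ gives $C(1,s)=1/\Gamma(1+2s)$, and $1+2s\in[1,3]$ yields the bound $1/\Gamma(x_0)$; your remark that the constant $1$ genuinely fails for $N=1$ is also accurate, since $\Gamma(1+2s)<1$ for small $s>0$. For $N\ge 2$ your route differs in organization, though it rests on the same underlying facts. The paper treats all $N\ge2$ at once by setting $c_N(s)=\log C(N,s)$ and showing $c_N'(s)=-\log 4-\psi(1+s)-\psi(\tfrac N2+s)\le -\log 4+2\gamma<0$ (using monotonicity of $\psi$ and $\psi(1)=-\gamma$, $\psi(\tfrac N2)\ge\psi(1)$ for $N\ge2$), so that $c_N(s)\le c_N(0)=0$; note the same numerical input, since $\log 4>2\gamma$ is exactly your $\log 2>\gamma$. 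You instead discard the factor $\Gamma(\tfrac N2+s)/\Gamma(\tfrac N2)\ge1$, which is legitimate only when $\tfrac N2\ge\tfrac32>x_0$, and this forces your case split $N=2$ versus $N\ge3$, with the auxiliary estimate $a^s\Gamma(1+s)\ge1$ for $a\ge2$ (correctly proved via $\psi(1+s)\ge-\gamma$) covering $N=2$ with $a=2$ and the remaining factor $4^s\Gamma(1+s)$ with $a=4$. What the paper's version buys is a single uniform computation with no case distinction and no appeal to the location of the minimum of $\Gamma$; what yours buys is a reusable elementary inequality and a slightly more transparent reason why the bound $1$ holds. Both are complete proofs of the lemma.
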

\begin{proof}
Let \(N=1\). The Lagrange's Duplication Formula for the Gamma function (see, for example, \cite{AS}) guaranties that
\[
\Gamma(x)\, \Gamma\Big(x+\frac{1}{2}\Big) = 2^{1-2x} \, \sqrt{\pi} \, \Gamma(2x).
\]
If we apply it by taking \(x = s + \frac{1}{2}\), recalling that \(\Gamma\Big(\frac{1}{2}\Big) = \sqrt{\pi}\), we immediately get
\[
\frac{\Gamma\Big(\frac{1}{2}\Big)}{4^s \, \Gamma(s+1)\, \Gamma\Big(s+\frac{1}{2}\Big)} = \frac{1}{\Gamma(2s+1)}.
\]
Moreover, since \(2s+1 \in [1,3]\), we have \(\Gamma(2s+1) \ge \Gamma(x_0) \simeq 0.8856\), where \(x_0 \simeq 1.4616\) is the minimum point of \(\Gamma\) in the interval \([1,3]\) (see \cite[Chapther 6]{AS} for a comprehensive account). Thus,  
\[
\bar{\mathtt v}(x) \le \frac{1}{\Gamma(x_0)} \simeq 1.1292.
\]

When \(N\ge2\), the bound on $\bar{\mathtt v}$ can be improved using the fact that the Gamma function is log-convex on $(0,+\infty)$ (see, for example \cite{AB}), that is the function
$$g(x)=\log \Gamma(x)$$
is convex on $(0,+\infty)$. Then the function
$$c_N(s)=\log \dfrac{\Gamma\left(\frac N2\right)}{4^{s}\,\Gamma(1+s)\,\Gamma\left(\frac{N+2s}{2}\right)}=
g\left({\mbox{$\frac N2$}}\right)-s\log4-g(1+s)-g\left({\mbox{$\frac N2+s$}}\right)$$
is concave on $[0,1]$. Furthermore, being $g'$ increasing, we obtain
$$c'_N(s)=-\log4-g'(1+s)-g'\left({\mbox{$\frac N2+s$}}\right)\le-\log4-2g'(1),\quad s\in[0,1].$$
Recalling that 
$$g'(1)=\frac{\Gamma'(1)}{\Gamma(1)}=-\gamma\simeq 0.5772$$
where $\gamma$ is the Euler-Mascheroni constant  (see, for instance, \cite{AS})  and taking into account the fact that $\log4\simeq 1.3863$, it follows that
$$c'_N(s)<0,\quad s\in[0,1].$$
On the other hand, $c_N(1)=0$, so $c_N(s)\le0$ for $s\in[0,1]$, that is,
$$\dfrac{\Gamma\left(\frac N2\right)}{4^{s}\,\Gamma(1+s)\,\Gamma\left(\frac{N+2s}{2}\right)}\le1,\quad s\in[0,1].$$
\end{proof}

%%%%%%%%%%%%%%%%%%%%%%%%%%%%

\section{A generalized fractional torsional rigidity}\label{sec_torsion}
For our purposes, we introduce a generalized version of the fractional torsional rigidity, first introduced in \cite{Ba} in the local case. Specifically, for $\alpha\in \R$, we consider (see \eqref{torgen_max_intro})
\begin{equation}
\label{torgen_max}
Q(\alpha, \Omega) = \sup_{\psi \in X_0^s(\Omega)} \left\{ -[\psi]^2_{H^{s}(\mathbb{R}^N)} + \alpha \int_\Omega |\psi(x)|^2 \, \mathrm{d}x + 2 \int_\Omega \psi(x) \, \mathrm{d}x \right\}.
\end{equation}
For any $\alpha\in (-\infty,\lambda_{1}(\Omega))$, the functional in \eqref{torgen_max} is bounded from above since, using  \eqref{eigen} and Young inequality, it holds that, for some positive $C$,
$$
-[\psi]^2_{H^{s}(\mathbb{R}^N)} + \alpha \int_\Omega |\psi(x)|^2 \, \mathrm{d}x + 2 \int_\Omega \psi(x) \, \mathrm{d}x \le 
C |\Omega|.
$$
Via classical arguments of semicontinuity and compactness, 
the maximum in \eqref{torgen_max} is attained at \(\psi = \mathtt{w}\), where \(\mathtt{w}\) is the unique solution to the problem
\begin{equation}
\label{torgen_pb}
\begin{cases}
(-\Delta)^s w = \alpha w + 1 & \text{in } \Omega, \\
w = 0 & \text{in } \mathbb{R}^N \setminus \Omega,
\end{cases}
\end{equation}
whose weak formulation reads as
\begin{equation}
    \label{torgen_weak}
\dfrac{\gamma(N,s)}{2} \inte_{\R^N\times \R^N} \frac{(w(x) - w(y))(\varphi(x) - \varphi(y))}{|x - y|^{N+2s}} \, \mathrm{d}x \mathrm{d}y = \alpha \int_\Omega w(x) \varphi(x) \, \mathrm{d}x + \int_\Omega \varphi(x) \, \mathrm{d}x, \quad  \varphi \in X_0^s(\Omega).
\end{equation}
Actually, the existence and uniqueness of $\mathtt{w}$ is ensured via the Lax-Milgram theorem, since the bilinear form
$$
\mathcal{B}(w,\varphi)=\dfrac{\gamma(N,s)}{2} \inte_{\R^N\times \R^N} \frac{(w(x) - w(y))(\varphi(x) - \varphi(y))}{|x - y|^{N+2s}} \, \mathrm{d}x \mathrm{d}y - \alpha \int_\Omega w(x) \varphi(x) \, \mathrm{d}x 
$$
is continuous and coercive on $X_0^s(\Omega)\times X_0^s(\Omega)$.
We explicitly observe that the coercivity of $\mathcal{B}$ is trivial when $\alpha <0$, while if $0<\alpha<\lambda_1(\Omega)$, it is enough to observe that, for any $u\in X_{0}^{s}(\Omega)$, we have
\[
[w]_{H^s(\R^N)}^{2}-\alpha\int_{\Omega}|w|^{2}dx\geq(1-\alpha\left(\lambda_{1}(\Omega)\right)^{-1})
[w]_{H^s(\R^N)}^{2}.
\]

\bigskip

\begin{remark}\label{remreg}
We can argue as in Remark \ref{regularity} getting that $\mathtt{w}$ is bounded and $\mathtt w\in C^{\infty}(\Omega)$.
\end{remark}
\begin{lemma}\label{positivity}
Let $-\infty<\alpha<\lambda_{1}(\Omega)$ and $\mathtt w$ be the solution to problem \eqref{torgen_pb}. Then \(\mathtt w \ge 0\) in \(\Omega\).
\end{lemma}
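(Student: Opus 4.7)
The plan is to test the weak formulation \eqref{torgen_weak} against $\varphi=\mathtt{w}^-$ (the negative part of $\mathtt{w}$) and to use the Rayleigh characterization of $\lambda_1(\Omega)$ to conclude that $\mathtt{w}^-$ must vanish identically. First I would recall that if $\phi\in X_0^s(\Omega)$, then $\phi^-\in X_0^s(\Omega)$ as well, so $\mathtt{w}^-$ is an admissible test function.

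The key nonlocal ingredient is the pointwise algebraic inequality
\begin{equation*}
(\mathtt{w}(x)-\mathtt{w}(y))(\mathtt{w}^-(x)-\mathtt{w}^-(y)) \le -(\mathtt{w}^-(x)-\mathtt{w}^-(y))^2, \qquad \forall\, x,y\in\R^N.
\end{equation*}
I would prove it by decomposing $\mathtt{w}=\mathtt{w}^+-\mathtt{w}^-$, expanding the product, and observing that a four-case analysis on the signs of $\mathtt{w}(x)$ and $\mathtt{w}(y)$ gives the auxiliary inequality $(\mathtt{w}^+(x)-\mathtt{w}^+(y))(\mathtt{w}^-(x)-\mathtt{w}^-(y))\le 0$, which encodes the fact that $\mathtt{w}^+$ and $\mathtt{w}^-$ have disjoint supports.

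Combining this with the pointwise identity $\mathtt{w}(x)\,\mathtt{w}^-(x)=-(\mathtt{w}^-(x))^2$ in $\Omega$, and inserting $\varphi=\mathtt{w}^-$ in \eqref{torgen_weak}, I would obtain
\begin{equation*}
-[\mathtt{w}^-]_{H^s(\R^N)}^2 \;\ge\; -\alpha\int_\Omega (\mathtt{w}^-(x))^2\,\di x + \int_\Omega \mathtt{w}^-(x)\,\di x,
\end{equation*}
that is,
\begin{equation*}
[\mathtt{w}^-]_{H^s(\R^N)}^2 + \int_\Omega \mathtt{w}^-(x)\,\di x \;\le\; \alpha\int_\Omega (\mathtt{w}^-(x))^2\,\di x.
\end{equation*}

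Finally, I would invoke the variational characterization \eqref{eigen} of $\lambda_1(\Omega)$, yielding $[\mathtt{w}^-]_{H^s(\R^N)}^2\ge \lambda_1(\Omega)\,\|\mathtt{w}^-\|_{L^2(\Omega)}^2$. Plugging this bound in gives
\begin{equation*}
(\lambda_1(\Omega)-\alpha)\int_\Omega (\mathtt{w}^-(x))^2\,\di x + \int_\Omega \mathtt{w}^-(x)\,\di x \;\le\; 0.
\end{equation*}
By hypothesis $\lambda_1(\Omega)-\alpha>0$, and both integrands are non-negative, so both summands must vanish. This forces $\mathtt{w}^-\equiv 0$ in $\Omega$, i.e.\ $\mathtt{w}\ge 0$. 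The only delicate point is the pointwise inequality in the first step, which replaces the chain rule $\nabla \mathtt{w}\cdot \nabla \mathtt{w}^- = -|\nabla \mathtt{w}^-|^2$ used in the local setting; once it is in place, the argument parallels the standard local weak maximum principle.
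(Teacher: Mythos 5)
Your proposal is correct and follows essentially the same route as the paper: testing \eqref{torgen_weak} with the negative part, using the pointwise inequality $(\mathtt{w}(x)-\mathtt{w}(y))(\mathtt{w}^-(x)-\mathtt{w}^-(y))\le -|\mathtt{w}^-(x)-\mathtt{w}^-(y)|^2$, and then invoking the Rayleigh characterization \eqref{eigen} together with $\alpha<\lambda_1(\Omega)$ to force $\mathtt{w}^-\equiv 0$. The only difference is cosmetic: you additionally justify the pointwise inequality via the disjoint-support argument for $\mathtt{w}^+$ and $\mathtt{w}^-$, which the paper asserts without proof.
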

\begin{proof}
Taking the negative part \(\mathtt w_- := \max\{-\mathtt w, 0\}\) as a test function in \eqref{torgen_weak}, we obtain
\[
\dfrac{\gamma(N,s)}{2}\inte_{\R^N\times \R^N}\frac{(\mathtt w(x) - \mathtt w(y))(\mathtt w_-(x) - \mathtt w_-(y))}{|x - y|^{N+2s}} \, \mathrm{d}x \mathrm{d}y 
= \alpha \int_\Omega \mathtt w(x) \mathtt w_-(x) \, \mathrm{d}x + \int_\Omega \mathtt w_-(x) \, \mathrm{d}x.
\]
Since
\[
(\mathtt w(x) - \mathtt w(y))(\mathtt w_-(x) - \mathtt w_-(y)) \le -|\mathtt w_-(x) - \mathtt w_-(y)|^2,
\]
then
\[
\alpha \int_\Omega \mathtt w(x) \mathtt w_-(x) \, \mathrm{d}x + \int_\Omega \mathtt w_-(x) \, \mathrm{d}x\leq -[\mathtt w_-]_{H^s(\R^N)}^2,
\]
and, since $\alpha<\lambda_{1}(\Omega)$, recalling \eqref{eigen} we get
\begin{align*}
\int_\Omega \mathtt w_-(x) \, \mathrm{d}x
&\leq \alpha \int_\Omega |\mathtt w_-(x)|^{2} \, \mathrm{d}x-
[\mathtt w_-]_{H^s(\R^N)}^2\\
&\leq -\left([\mathtt w_-]_{H^s(\R^N)}^2-\lambda_1(\Omega)\int_{\Omega}|\mathtt w_-(x)|^{2}\di x\right)\\
&\leq 0
\end{align*}
and we conclude $\mathtt w_{-}\equiv 0$.
\end{proof}

Furthermore, from  \eqref{torgen_max}-\eqref{torgen_pb}-\eqref{torgen_weak}, it follows that
\begin{equation}
    \label{int_torgen}
Q(\alpha, \Omega) = \int_\Omega \mathtt{w}(x) \, \mathrm{d}x,
\end{equation}
and, when $\alpha=0$, then
\[
 Q(0, \Omega)=T(\Omega).
\]
From Lemma \ref{positivity} we deduce that $Q(\alpha,\Omega)\geq0$.
The following proposition summarizes fundamental finiteness and monotonicity properties of $Q(\alpha,\Omega)$.

\begin{proposition}
\label{prop_Qdom}
Let $\Omega \subset \R^N$ be a bounded, open set with Lipschitz boundary. Then:
\begin{enumerate}[\rm(a)]

\item $Q(\alpha, \Omega)$ is finite if and only if
\[
-\infty < \alpha < \lambda_1(\Omega);
\]

\item if $\alpha < \lambda_1(\Omega^\sharp)$, then
\[
Q(\alpha, \Omega) \le Q(\alpha, \Omega^\sharp);
\]

\item $Q(\alpha, \Omega)$ is monotone increasing with respect to the domain, i.e.
\[
\Omega' \subset \Omega \quad\Longrightarrow\quad Q(\alpha, \Omega') \le Q(\alpha, \Omega).
\]
\end{enumerate}
\end{proposition}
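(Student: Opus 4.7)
The plan is to treat (a), (b), (c) separately, using the direct method together with a diverging test family for (a), the fractional P\'olya-Szeg\H o principle from Proposition \ref{prop_polya} for (b), and the trivial inclusion of admissible function classes for (c).

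For (a), one direction is essentially already contained in the discussion preceding the statement: when $\alpha<\lambda_1(\Omega)$, the variational characterization \eqref{eigen} combined with Young's inequality yields a uniform upper bound for the functional in \eqref{torgen_max}, so $Q(\alpha,\Omega)<+\infty$. For the converse, I would plug the positive first eigenfunction $\mathtt{u}_1$ into the functional, setting $\psi=t\mathtt{u}_1$ with $t>0$. Using $[\mathtt{u}_1]_{H^s(\R^N)}^2=\lambda_1(\Omega)\|\mathtt{u}_1\|_{L^2(\Omega)}^2$, the functional becomes
$$
-[t\mathtt{u}_1]_{H^s(\R^N)}^2+\alpha\|t\mathtt{u}_1\|_{L^2(\Omega)}^2+2\int_\Omega t\mathtt{u}_1\,\di x=t^2\bigl(\alpha-\lambda_1(\Omega)\bigr)\|\mathtt{u}_1\|_{L^2(\Omega)}^2+2t\int_\Omega \mathtt{u}_1\,\di x.
$$
For $\alpha>\lambda_1(\Omega)$ the quadratic coefficient is positive and the expression tends to $+\infty$ as $t\to+\infty$; for $\alpha=\lambda_1(\Omega)$ the quadratic term vanishes and the strict positivity of $\mathtt{u}_1$ makes the linear part diverge as $t\to+\infty$. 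In either case $Q(\alpha,\Omega)=+\infty$.

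For (b), let $\mathtt{w}\in X_0^s(\Omega)$ be the maximizer of \eqref{torgen_max}, which is nonnegative by Lemma \ref{positivity}. Its Schwarz rearrangement $\mathtt{w}^\sharp$ is supported in $\Omega^\sharp$ and belongs to $X_0^s(\Omega^\sharp)$. Since rearrangement preserves $L^p$-norms, one has $\|\mathtt{w}^\sharp\|_{L^2(\Omega^\sharp)}=\|\mathtt{w}\|_{L^2(\Omega)}$ and $\int_{\Omega^\sharp}\mathtt{w}^\sharp\,\di x=\int_\Omega \mathtt{w}\,\di x$; Proposition \ref{prop_polya} in turn gives $[\mathtt{w}^\sharp]_{H^s(\R^N)}^2\le[\mathtt{w}]_{H^s(\R^N)}^2$. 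Consequently,
\begin{align*}
Q(\alpha,\Omega)&=-[\mathtt{w}]_{H^s(\R^N)}^2+\alpha\|\mathtt{w}\|_{L^2(\Omega)}^2+2\int_\Omega \mathtt{w}\,\di x\\
&\le -[\mathtt{w}^\sharp]_{H^s(\R^N)}^2+\alpha\|\mathtt{w}^\sharp\|_{L^2(\Omega^\sharp)}^2+2\int_{\Omega^\sharp}\mathtt{w}^\sharp\,\di x\le Q(\alpha,\Omega^\sharp),
\end{align*}
the last inequality coming from the admissibility of $\mathtt{w}^\sharp$ in the variational problem for $Q(\alpha,\Omega^\sharp)$. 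The hypothesis $\alpha<\lambda_1(\Omega^\sharp)$, together with Faber-Krahn (Proposition \ref{FK_prop}) which gives $\lambda_1(\Omega^\sharp)\le\lambda_1(\Omega)$, ensures that both sides are finite, so the chain makes sense. Note that the sign of $\alpha$ is immaterial here, because the $L^2$-norm is exactly preserved under rearrangement.

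For (c), any $\psi\in X_0^s(\Omega')$ satisfies $\psi=0$ a.e.\ outside $\Omega'$, hence outside $\Omega$, and therefore automatically belongs to $X_0^s(\Omega)$; the functional in \eqref{torgen_max} takes the same value on $\psi$ irrespective of whether the domain of the integrals is $\Omega'$ or $\Omega$. The sup defining $Q(\alpha,\Omega')$ is then taken over a subset of the class used for $Q(\alpha,\Omega)$, yielding $Q(\alpha,\Omega')\le Q(\alpha,\Omega)$, with the convention $+\infty\le+\infty$ covering the degenerate cases. I do not foresee any substantive obstacle; the only mildly delicate point is the boundary case $\alpha=\lambda_1(\Omega)$ in (a), which is handled at once by the strict positivity of $\mathtt{u}_1$.
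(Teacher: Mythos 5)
Your proof is correct and takes essentially the same route as the paper: for (a) the eigenvalue characterization plus Young's inequality for finiteness and test functions proportional to the positive first eigenfunction for divergence (with the borderline case $\alpha=\lambda_1(\Omega)$ handled by the linear term), for (b) the fractional P\'olya-Szeg\H{o} principle applied to the nonnegative maximizer, and for (c) the inclusion of the admissible classes. The paper dismisses (b) and (c) as immediate; your write-up simply supplies the details, including the correct observation that Faber-Krahn guarantees finiteness of both sides under the hypothesis $\alpha<\lambda_1(\Omega^\sharp)$.
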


\begin{proof} 
$ $

\begin{enumerate}[\rm(a)]

\item Suppose that \(Q(\alpha, \Omega) < +\infty\). If, by contradiction, \(\alpha \ge \lambda_1(\Omega)\), we could consider \(\psi = k \mathtt{u_1}\) as a test function in \eqref{torgen_max}, where \(k > 0\) is an arbitrary constant and \(\mathtt{u_1}\) is a positive eigenfunction corresponding to $\lambda_1(\Omega)$, immediately obtaining a contradiction. 

\noindent Conversely, if \(-\infty < \alpha < \lambda_1(\Omega)\), for every $\psi \in X_0^s(\Omega)$, we can estimate
\[
-[\psi]^2_{H^s(\mathbb{R}^{N})} + \alpha \int_\Omega |\psi(x)|^2 \, \mathrm{d}x + 2 \int_\Omega \psi(x) \, \mathrm{d}x
\le (\alpha - \lambda_1(\Omega)) \int_\Omega |\psi(x)|^2 \, \mathrm{d}x + 2 \int_\Omega \psi(x) \, \mathrm{d}x.
\]
Since \(\alpha - \lambda_1(\Omega) < 0\), applying Young's inequality shows that \(Q(\alpha, \Omega)\) is indeed finite.

\item The claim follows immediately from the P\'olya-Szeg\H o principle \eqref{eq_polya}.

\item The result is an immediate consequence of the definition of $Q(\alpha,\Omega)$.

\end{enumerate}
\end{proof}

We now list some fundamental regularity, monotonicity and asymptotic properties of the functional $Q(\alpha,\Omega)$ with respect to $\alpha$.
\begin{proposition}
\label{prop_Qalpha}
Let $\Omega \subset \R^N$ be a bounded, open set with Lipschitz boundary. Then $Q(\alpha, \Omega)$ is differentiable and monotone increasing with respect to $\alpha$. Moreover,  if $\mathtt w$ solves \eqref{torgen_pb}, then
\begin{equation*}
    \label{der_Qa}
\frac{\mathrm{d}}{\mathrm{d}\alpha} Q(\alpha, \Omega) = \int_\Omega |\mathtt w(x)|^2 \,\mathrm{d}x.
\end{equation*}
Furtheremore, it holds
\begin{equation}
\label{limQ_0}
\lim_{\alpha \to -\infty} Q(\alpha, \Omega) = 0,
    \end{equation}
\begin{equation}
\label{limQ_i}
\lim_{\alpha \to \lambda_1(\Omega)^-} Q(\alpha, \Omega) = +\infty.
\end{equation}
\end{proposition}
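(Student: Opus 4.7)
The plan is to exploit the envelope structure of $Q(\alpha,\Omega)$ as the supremum in \eqref{torgen_max}. Monotonicity in $\alpha$ is immediate: for every fixed $\psi \in X_0^s(\Omega)$ with $\int_\Omega|\psi|^2 > 0$, the functional inside the supremum is strictly increasing in $\alpha$, hence so is its supremum (at least non-decreasing in general, and strict in view of the strict inequalities obtained below).

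For the derivative formula, I would apply a standard Danskin/envelope argument. Fix $\alpha,\beta$ with $-\infty < \alpha < \beta < \lambda_1(\Omega)$, let $\mathtt{w}_\alpha,\mathtt{w}_\beta$ denote the corresponding solutions to \eqref{torgen_pb}, and use $\mathtt{w}_\alpha$ as a competitor at parameter $\beta$ and vice versa in \eqref{torgen_max}. Since $Q(\beta,\Omega)$ equals the maximum of the $\beta$-functional while $\mathtt{w}_\alpha$ realizes $Q(\alpha,\Omega)$, one directly obtains the two-sided bound
\begin{equation*}
(\beta-\alpha)\int_\Omega |\mathtt{w}_\alpha(x)|^2\,\di x \;\le\; Q(\beta,\Omega)-Q(\alpha,\Omega)\;\le\; (\beta-\alpha)\int_\Omega |\mathtt{w}_\beta(x)|^2\,\di x.
\end{equation*}
Dividing by $\beta-\alpha$ and letting $\beta\to\alpha$, the claim reduces to continuity of $\alpha\mapsto\|\mathtt{w}_\alpha\|_{L^2(\Omega)}^2$. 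This is the main technical point of the proof: I would obtain it by subtracting the weak formulations \eqref{torgen_weak} for $\mathtt{w}_\alpha$ and $\mathtt{w}_\beta$, testing with $\mathtt{w}_\alpha-\mathtt{w}_\beta$, and absorbing the resulting $L^2$ term using the spectral gap $\alpha,\beta<\lambda_1(\Omega)$ and the variational characterization \eqref{eigen}. This yields strong convergence $\mathtt{w}_\beta\to \mathtt{w}_\alpha$ in $X_0^s(\Omega)$, hence in $L^2(\Omega)$ by the compact embedding of \Cref{sobolev}. Once the derivative formula is proved, strict monotonicity of $Q(\cdot,\Omega)$ also follows (except at the trivial point where $\mathtt{w}_\alpha\equiv 0$, which by \Cref{positivity} and the equation $(-\Delta)^s\mathtt{w}_\alpha=\alpha\mathtt{w}_\alpha+1$ cannot occur).

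For \eqref{limQ_0}, I would use $\mathtt{w}_\alpha\ge 0$ (guaranteed by \Cref{positivity}) and test \eqref{torgen_weak} with $\mathtt{w}_\alpha$ itself, obtaining $[\mathtt{w}_\alpha]_{H^s(\R^N)}^2 - \alpha\|\mathtt{w}_\alpha\|_{L^2(\Omega)}^2 = \int_\Omega \mathtt{w}_\alpha$. For $\alpha<0$, dropping the nonnegative seminorm and applying Cauchy--Schwarz gives
\begin{equation*}
|\alpha|\,\|\mathtt{w}_\alpha\|_{L^2(\Omega)}^2 \;\le\; \int_\Omega \mathtt{w}_\alpha(x)\,\di x \;\le\; |\Omega|^{1/2}\|\mathtt{w}_\alpha\|_{L^2(\Omega)},
\end{equation*}
so $\|\mathtt{w}_\alpha\|_{L^2(\Omega)}\le |\Omega|^{1/2}/|\alpha|$ and therefore $Q(\alpha,\Omega)=\int_\Omega \mathtt{w}_\alpha \le |\Omega|/|\alpha| \to 0$.

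For \eqref{limQ_i}, I would exhibit a one-parameter family of competitors based on the positive first eigenfunction $\mathtt{u}_1$. Plugging $\psi=k\mathtt{u}_1$ with $k>0$ into \eqref{torgen_max} and using $[\mathtt{u}_1]_{H^s(\R^N)}^2=\lambda_1(\Omega)\|\mathtt{u}_1\|_{L^2(\Omega)}^2$, the argument of the supremum reduces to $-k^2(\lambda_1(\Omega)-\alpha)\|\mathtt{u}_1\|_{L^2(\Omega)}^2 + 2k\int_\Omega \mathtt{u}_1$, a concave function of $k$ whose maximum over $k>0$ (attained at $k=\int_\Omega\mathtt{u}_1/((\lambda_1(\Omega)-\alpha)\|\mathtt{u}_1\|_{L^2(\Omega)}^2)$) equals
\begin{equation*}
\frac{\bigl(\int_\Omega \mathtt{u}_1(x)\,\di x\bigr)^2}{(\lambda_1(\Omega)-\alpha)\,\|\mathtt{u}_1\|_{L^2(\Omega)}^2}.
\end{equation*}
This provides a lower bound for $Q(\alpha,\Omega)$ that diverges as $\alpha\to\lambda_1(\Omega)^-$, concluding the proof.
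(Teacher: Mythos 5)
Your argument is correct, and while it shares the paper's basic ingredients (cross-testing the weak formulations, the spectral gap $\alpha<\lambda_1(\Omega)$, and competitors of the form $k\mathtt u_1$), it differs at three points in a way worth noting. For the derivative, the paper does not use a Danskin-type two-sided bound: testing the equation for $\mathtt w$ with $\mathtt w_\varepsilon$ and vice versa yields the \emph{exact} identity $Q(\alpha+\varepsilon,\Omega)-Q(\alpha,\Omega)=\varepsilon\int_\Omega \mathtt w\,\mathtt w_\varepsilon\,\di x$, after which it proves the quantitative bound $\|\mathtt w_\varepsilon-\mathtt w\|_{L^2(\Omega)}\le C\varepsilon$ using a uniform $L^\infty$ bound on $\mathtt w_\varepsilon$ (Remark \ref{remreg}); your envelope inequalities reduce matters to mere continuity of $\alpha\mapsto\|\mathtt w_\alpha\|_{L^2(\Omega)}^2$, which you obtain by the same subtraction-and-absorption estimate, so the two routes are of comparable length, yours avoiding the $L^\infty$ regularity input. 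One small point to supply in your continuity step: after absorbing, you are left with $[\mathtt w_\alpha-\mathtt w_\beta]_{H^s(\R^N)}\le C\,|\alpha-\beta|\,\|\mathtt w_\beta\|_{L^2(\Omega)}$, so you need $\|\mathtt w_\beta\|_{L^2(\Omega)}$ bounded uniformly for $\beta$ near $\alpha$; this follows at once by testing \eqref{torgen_weak} with $\mathtt w_\beta$ and using \eqref{eigen}, which gives $\|\mathtt w_\beta\|_{L^2(\Omega)}\le |\Omega|^{1/2}(\lambda_1(\Omega)-\beta)^{-1}$, uniformly for, say, $\beta\le(\alpha+\lambda_1(\Omega))/2$. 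For \eqref{limQ_0} the paper argues pointwise: at a maximum point of the (classical, by Remark \ref{remreg}) solution one has $(-\Delta)^s\mathtt w\ge 0$, hence $0\le\mathtt w\le -1/\alpha$ and $Q(\alpha,\Omega)\le|\Omega|/|\alpha|$; your energy argument (drop the seminorm, Cauchy--Schwarz) reaches the same bound $|\Omega|/|\alpha|$ purely variationally, without any regularity, which is arguably more elementary. For \eqref{limQ_i} the paper fixes $k$, lets $\alpha\to\lambda_1(\Omega)^-$ to get $\liminf\ge 2k\int_\Omega\mathtt u_1$, and invokes the arbitrariness of $k$ together with monotonicity; your optimization in $k$ gives the explicit divergent lower bound $\bigl(\int_\Omega\mathtt u_1\bigr)^2/\bigl((\lambda_1(\Omega)-\alpha)\|\mathtt u_1\|_{L^2(\Omega)}^2\bigr)$, a slightly sharper form of the same idea that does not even need the existence of the limit beforehand.
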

\begin{proof} 
The monotonicity of $Q(\alpha, \Omega)$ with respect to $\alpha$ immediately follows from the definition. We prove directly the derivation formula. For $\varepsilon>0$ small enough, let $\mathtt w_\varepsilon$ be the solution to the following problem
\begin{equation*}
\left\{
\begin{array}
[c]{lll}%
\left( -\Delta\right)^{s}w_\varepsilon=(\alpha+\varepsilon) w_\varepsilon+1 & & \text{in }%
\Omega,\\
\\
w_\varepsilon=0 & & \text{on }\R^{N}\setminus\Omega,
\end{array}
\right. %
\end{equation*}
whose weak formulation reads as
\begin{equation}
\label{torgen_weak_eps}
\dfrac{\gamma(N,s)}{2} \inte_{\R^N \times \R^N}
\frac{(w_\varepsilon(x) - w_\varepsilon(y))(\varphi(x) - \varphi(y))}{|x - y|^{N+2s}} 
\, \mathrm{d}x \mathrm{d}y
= (\alpha + \varepsilon) \int_\Omega w_\varepsilon(x) \varphi(x) \, \mathrm{d}x 
+ \int_\Omega \varphi(x) \, \mathrm{d}x, 
\quad \varphi \in X_0^s(\Omega).
\end{equation}

By taking $\varphi = \mathtt w_\varepsilon$ as a test function in the weak formulation \eqref{torgen_weak}, and $\varphi = \mathtt w$ as a test function in the weak formulation \eqref{torgen_weak_eps}, and using \eqref{int_torgen}, we obtain
\[
\begin{split}
&Q(\alpha+\varepsilon, \Omega)=\dfrac{\gamma(N,s)}{2} \inte_{\R^N \times \R^N} \frac{(\mathtt w(x) - \mathtt w(y))(\mathtt w_\varepsilon(x) - \mathtt w_\varepsilon(y))}{|x - y|^{N+2s}} \, \mathrm{d}x \mathrm{d}y -  \alpha\int_\Omega \mathtt w(x)\mathtt w_\varepsilon(x)\, \mathrm{d}x, \\
&Q(\alpha, \Omega)=\dfrac{\gamma(N,s)}{2} \inte_{\R^N \times \R^N} \frac{(\mathtt w_\varepsilon(x) - \mathtt w_\varepsilon(y))(\mathtt w(x) - \mathtt w(y))}{|x - y|^{N+2s}} \, \mathrm{d}x \mathrm{d}y - (\alpha+\varepsilon) \int_\Omega \mathtt w_\varepsilon(x)\mathtt w(x)\, \mathrm{d}x.
\end{split}
\]
Hence
\begin{equation}
\label{diffQ}
Q(\alpha+\varepsilon, \Omega)-Q(\alpha, \Omega)=\varepsilon\int_\Omega \mathtt w(x)\mathtt w_\varepsilon(x)\, \di x.
\end{equation}
Let $0<\varepsilon<\frac{\lambda_1(\Omega)-\alpha}{2}$, by Remark \ref{remreg}, there exists a constant $M>0$, independent of $\varepsilon$, such that
\[
0 \le \mathtt w_\varepsilon(x) \le M \quad \text{for all } x \in \Omega.
\]
On the other hand, the function $\mathtt w_\varepsilon-\mathtt w$ solves the problem
\begin{equation} \label{torgendiff_pb}
\left\{
\begin{array}
[c]{lll}%
\left( -\Delta\right)^{s}(w_\varepsilon-w)=\alpha (w_\varepsilon-w)+\varepsilon w_\varepsilon & & \text{in }%
\Omega,\\
\\
w_\varepsilon-w=0 & & \text{on }\R^{N}\setminus\Omega.
\end{array}
\right. %
\end{equation}
Using $\mathtt w_\varepsilon-\mathtt w$ as a test function in the weak formulation of \eqref{torgendiff_pb} and the variational characterization of $\lambda_1(\Omega)$ in \eqref{eigen}, we have
\begin{equation*}
[\mathtt w_\varepsilon-\mathtt w]^2_{H^{s}(\RR)}=\alpha\int_\Omega (\mathtt w_\varepsilon(x)-\mathtt w(x))^2\di x+\varepsilon\int_\Omega\mathtt w_\varepsilon(x) (\mathtt w_\varepsilon(x)-\mathtt w(x))\di x\end{equation*}
and hence
\begin{equation*}
(\lambda_1(\Omega)-\alpha)\int_\Omega (\mathtt w_\varepsilon(x)-\mathtt w(x))^2\di x\leq\varepsilon M\int_\Omega |\mathtt w_\varepsilon(x)-\mathtt w(x)|\, \di x\leq\varepsilon M|\Omega|^\frac 12\left(\int_\Omega (\mathtt w_\varepsilon(x)-\mathtt w(x))^2\, \di x\right)^\frac 12.
\end{equation*}
It follows that there exists the positive constant $C=(\lambda_1(\Omega)-\alpha)^{-2} M^2 |\Omega|$, which does not depend on $\varepsilon$, such that 
\begin{equation}\label{223}
\int_\Omega (\mathtt w_\varepsilon(x)-\mathtt w(x))^2\, \di x\le C\varepsilon^2.
\end{equation}
In particular, by H\"older inequality \eqref{223} implies
\[
\left|\int_{\Omega}\mathtt w(\mathtt w_\varepsilon-\mathtt w)\,dx\right|\leq \|\mathtt w\|_{L^{2}(\Omega)}\|\mathtt w_\varepsilon-\mathtt w\|_{L^{2}(\Omega)}\rightarrow 0
\]
thus
\begin{equation}
\label{lim_wwe}
\lim_{\varepsilon\rightarrow0}\int_\Omega \mathtt w(x)\mathtt w_\varepsilon(x)\, \di x=\int_\Omega |\mathtt w(x)|^2\, \di x.
\end{equation}
Finally, taking into account \eqref{diffQ} and \eqref{lim_wwe}, we have
\begin{equation*}
\lim_{\varepsilon\rightarrow0}\frac{Q(\alpha+\varepsilon, \Omega)-Q(\alpha, \Omega)}\varepsilon=\int_\Omega |\mathtt w(x)|^2\, \di x.
\end{equation*}

In order to prove \eqref{limQ_0}, we first show a bound for the solution $\mathtt w$ to problem \eqref{torgen_pb} when $\alpha<0$. 
Observe that $\mathtt w$ is classical in view of Remark \ref{remreg}.
Let $\bar{x}$ be a maximum point of $\mathtt w$. Then $(-\Delta)^{s}\mathtt w(\bar{x})\geq 0$ and from the equation satisfied by $\mathtt w$ we deduce
\[
\alpha\, \mathtt w(\bar{x}) + 1 \ge 0,
\]
whence  
\[
0 \le\mathtt w \le -\frac{1}{\alpha}\qquad \text{in}\ \Omega.
\]
It follows that $\mathtt w \to 0$ uniformly in $\Omega$ as $\alpha \to -\infty$, and therefore  
\[
\lim_{\alpha \to -\infty} Q(\alpha,\Omega) = \lim_{\alpha \to -\infty} \int_\Omega \mathtt w(x)\, \mathrm{d}x = 0.
\]

Finally, in order to prove \eqref{limQ_i}, we observe that, from Proposition \ref{prop_Qalpha}, the limit
\[\lim_{\alpha\rightarrow\lambda_1(\Omega)^-}Q(\alpha, \Omega)
\]
exists in view of the monotonicity with respect to $\alpha$. Using $w=k\mathtt u_1$ as a test function in \eqref{torgen_max}, where $k$ is an arbitrary positive constant and $\mathtt u_1$ is a positive eigenfunction corresponding to $\lambda_1(\Omega)$, we obtain
\begin{align*}
\displaystyle Q(\alpha, \Omega)&\ge-[k\mathtt u_1]^2_{H^{s}(\RR)}+\alpha\int_\Omega |k\mathtt u_1(x)|^2\, \di x+2\int_\Omega k\mathtt u_1(x) \di x\\
\\
\displaystyle\quad&= \bigl(\alpha-\lambda_1(\Omega)\bigr)k^2\int_\Omega \mathtt u_1(x)^2\, \di x+2k\int_\Omega \mathtt u_1(x)\, \di x\,.
\end{align*}
Letting $\alpha\rightarrow\lambda_1(\Omega)^-$, we have
\[
\lim_{\alpha\rightarrow\lambda_1(\Omega)^-}Q(\alpha, \Omega)\ge 2k\int_\Omega\mathtt u_1(x)\, \di x
\]
and from the arbitrariness of $k$ the claim follows.
\end{proof}

\begin{remark}
We note that in \cite{KJ81}, in the local case $s=1$, \eqref{limQ_i} is actually established in the stronger form
\[
\lim_{\alpha\rightarrow-\infty} -\alpha Q(\alpha,\Omega) = |\Omega|,
\]
by exploiting the explicit solution to problems of the form \eqref{torgen_pb} when \(\Omega\) is a ball. A glimpse of this behavior can also be observed in the proof of \Cref{prop_QR} (a) below, which contains related partial results.
\end{remark}

When $\Omega$ is a ball, all the results stated in \Cref{prop_Qalpha} hold true, but some further properties about the behaviour of $Q(\alpha, \Omega)$ with respect to the radius of the ball can be added. For this purpose, we introduce the function  \begin{equation}
\label{torgenrad_min}
Q^\sharp(\alpha,R)=Q(\alpha,B_{R})
\end{equation}
defined on the following set 
\[
D=\left\{ (\alpha,R): \alpha \le 0, \, R > 0\right\}
\cup\left\{(\alpha,R): \alpha>0,\,0<R<g(\alpha)\right\}
\]
being 
\[
g(\alpha)=\left(\frac{\lambda_{1}(B_{1})}{\alpha}\right)^{\frac{1}{2s}}.
\] 
Indeed, if $\alpha>0$ and $0<R<g(\alpha)$, we have 
\[
\alpha< R^{-2s}\lambda_{1}(B_{1})=\lambda_{1}(B_{R})
\]
and the value $Q^\sharp(\alpha,R)$ is finite.

Let us observe that a simple scaling argument shows that, if $\bar{\mathtt w}$ solves
\begin{equation}
\label{torgenrad_pb} 
\left\{
\begin{array}
[c]{lll}%
\left( -\Delta\right)^{s} \bar w=\alpha \bar w+1 & & \text{in }%
B_R,\\
\\
 \bar w=0 & & \text{on }\R^{N}\setminus B_R,
\end{array}
\right. %
\end{equation}
then the function \begin{equation*}
\bar{\mathtt h}(x)=\frac 1{R^{2s}}\bar{\mathtt w}(xR)
\end{equation*}
solves the problem
\begin{equation}
\label{torgenrad_pb_scal}
\left\{
\begin{array}
[c]{lll}%
\left( -\Delta\right)^{s} \bar h=\alpha R^{2s} \bar h +1 & & \text{in }%
B_1,\\
\\
 \bar h =0 & & \text{on }\R^{N}\setminus B_1.
\end{array}
\right. %
\end{equation}
As a consequence, we get
\begin{equation}\label{Qris}
Q^\sharp(\alpha,R)=R^{N+2s}Q^\sharp(\alpha R^{2s},1), \qquad (\alpha,R) \in D.
\end{equation}
The weak formulation of \eqref{torgenrad_pb_scal} reads as
\begin{equation}
\label{torgenrad_weak_scal}
\dfrac{\gamma(N,s)}{2} \inte_{\R^N\times\R^N} \frac{(\bar h (x) - \bar h(y))(\varphi(x) - \varphi(y))}{|x - y|^{N+2s}} \, \mathrm{d}x \, \mathrm{d}y
= \alpha R^{2s} \int_{B_1} \bar h(x) \varphi(x) \, \mathrm{d}x + \int_{B_1} \varphi(x) \, \mathrm{d}x, \quad  \varphi \in X_0^s(B_1).
\end{equation}

Let now describe the range of parameters that guarantee the finiteness of $Q^\sharp(\alpha,R)$, and study its behavior at the endpoints of this range.

\begin{proposition}
\label{prop_QR}
Let $Q^\sharp(\alpha,R)$ be the function defined in \eqref{torgenrad_min}. Then the following statements hold.

\noindent $\bullet$ If $\alpha\le 0$, the function $Q^\sharp(\alpha,R)$ is finite for every $R>0$. Moreover
\begin{equation}
\label{R_neg_lim}
\lim_{R\rightarrow+\infty}Q^\sharp(\alpha,R)=+\infty.
\end{equation}

\noindent $\bullet $ If $\alpha>0$, the function $Q^\sharp(\alpha,R)$ is finite if and only if
\begin{equation}
\label{R_pos_R}
0<R<{\widetilde{\!R}}\equiv\left(\frac{\lambda_1(B_1)}\alpha\right)^{\frac 1{2s}}.
\end{equation}
\noindent Moreover:
\begin{equation}
\label{R_pos_lim}
\lim_{R\rightarrow\,\widetilde{\!R}^-}Q^\sharp(\alpha,R)=+\infty.
\end{equation}

\end{proposition}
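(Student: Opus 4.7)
The plan is to split the argument according to the sign of $\alpha$. In each case the characterization of finiteness reduces to Proposition~\ref{prop_Qdom}(a) combined with the eigenvalue scaling \eqref{eigen-scal}; the divergence limits are obtained either by a direct test-function estimate in the variational characterization \eqref{torgen_max}, or by combining the scaling identity \eqref{Qris} with the limit \eqref{limQ_i} from Proposition~\ref{prop_Qalpha}.

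\textbf{Case $\alpha\le 0$.} Since $\lambda_1(B_R) > 0$ for every $R > 0$, the inequality $\alpha < \lambda_1(B_R)$ is automatically satisfied, so Proposition~\ref{prop_Qdom}(a) gives the finiteness of $Q^\sharp(\alpha, R)$. For the divergence \eqref{R_neg_lim}, fix a nontrivial, nonnegative $\phi \in X_0^s(B_1)$ and set $\phi_R(x) := \phi(x/R) \in X_0^s(B_R)$. A direct change of variables yields
\begin{equation*}
\int_{B_R}\phi_R\,\di x = R^N\int_{B_1}\phi\,\di x,\quad \int_{B_R}\phi_R^2\,\di x = R^N\int_{B_1}\phi^2\,\di x,\quad [\phi_R]^2_{H^s(\R^N)} = R^{N-2s}[\phi]^2_{H^s(\R^N)}.
\end{equation*}
Plugging $\psi = c\,\phi_R$ into \eqref{torgen_max} and maximizing the resulting concave quadratic in $c>0$ (concavity being guaranteed by $\alpha\le 0$) gives the lower bound
\begin{equation*}
Q^\sharp(\alpha, R) \;\ge\; \frac{R^{N}\bigl(\int_{B_1}\phi\,\di x\bigr)^2}{R^{-2s}[\phi]^2_{H^s(\R^N)} - \alpha\int_{B_1}\phi^2\,\di x}.
\end{equation*}
The right-hand side diverges as $R\to+\infty$: for $\alpha=0$ the denominator decays like $R^{-2s}$, while for $\alpha<0$ the denominator remains bounded and the numerator grows like $R^N$.

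\textbf{Case $\alpha>0$.} The scaling \eqref{eigen-scal} applied with $B_R = R\cdot B_1$ yields $\lambda_1(B_R) = R^{-2s}\lambda_1(B_1)$, and Proposition~\ref{prop_Qdom}(a) then asserts that $Q^\sharp(\alpha,R)$ is finite if and only if $\alpha < R^{-2s}\lambda_1(B_1)$, which is equivalent to $R<\widetilde{R}$, proving \eqref{R_pos_R}. For the limit \eqref{R_pos_lim}, the scaling identity \eqref{Qris} reads $Q^\sharp(\alpha,R) = R^{N+2s}\,Q^\sharp(\alpha R^{2s},1)$. As $R\to\widetilde{R}^-$, one has $\alpha R^{2s} \to \alpha\widetilde{R}^{2s} = \lambda_1(B_1)$ from below, so \eqref{limQ_i} applied with $\Omega = B_1$ gives $Q^\sharp(\alpha R^{2s},1)\to+\infty$; since $R^{N+2s}\to\widetilde{R}^{N+2s}>0$, we conclude.

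The only mildly delicate step is the test-function lower bound used in the case $\alpha\le 0$: once the scaling $\phi_R(x)=\phi(x/R)$ is chosen, the computation of the Gagliardo seminorm and of the $L^2$/$L^1$ norms of $\phi_R$ is routine, and the resulting estimate has exactly the asymptotics needed to conclude that $Q^\sharp(\alpha,R)\to+\infty$ both for $\alpha=0$ and for $\alpha<0$.
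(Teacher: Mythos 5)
Your proof is correct. For the case $\alpha>0$ it coincides with the paper's argument: finiteness via Proposition~\ref{prop_Qdom}(a) together with the scaling $\lambda_1(B_R)=R^{-2s}\lambda_1(B_1)$ (equivalently, via \eqref{Qris}), and the limit \eqref{R_pos_lim} from \eqref{Qris} combined with \eqref{limQ_i} applied to $B_1$. For $\alpha\le 0$ you take a genuinely different, and in fact more elementary, route: the paper first proves the sharper statement $\liminf_{\alpha\to-\infty}\bigl(-\alpha\,Q^\sharp(\alpha,1)\bigr)>0$ by testing \eqref{torgen_max} with $-\bar{\mathtt k}/\alpha$, where $\bar{\mathtt k}$ is the explicit fractional torsion function \eqref{tor_sol} of $B_1$, using the $L^\infty$ bound of Lemma~\ref{Lemma3.1}, and then feeds this into the scaling identity \eqref{Qris} to get \eqref{R_neg_lim}; you instead plug the rescaled test function $c\,\phi(x/R)$ directly into \eqref{torgen_max} and optimize in $c$, obtaining the lower bound $R^{N}\bigl(\int_{B_1}\phi\bigr)^2\bigl(R^{-2s}[\phi]^2_{H^s(\R^N)}-\alpha\int_{B_1}\phi^2\bigr)^{-1}$, whose divergence as $R\to+\infty$ (rate $R^{N+2s}$ for $\alpha=0$, rate $R^N$ for $\alpha<0$) gives \eqref{R_neg_lim} without ever invoking the explicit solution or the Gamma-function estimates. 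Your computation of the scalings of the Gagliardo seminorm and of the $L^1$, $L^2$ norms is correct, and positivity of the denominator (hence concavity in $c$) is indeed guaranteed by $\alpha\le 0$. What your shortcut gives up is only the quantitative byproduct of the paper's route, namely the lower bound on $-\alpha Q^\sharp(\alpha,1)$ as $\alpha\to-\infty$, which the paper refers back to in the remark comparing with the local identity $\lim_{\alpha\to-\infty}(-\alpha Q(\alpha,\Omega))=|\Omega|$; for the statement of Proposition~\ref{prop_QR} itself your argument is fully sufficient.
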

\begin{proof}
The case $\alpha=0$ is immediate.
In the case $\alpha<0$ \Cref{prop_Qdom} (a) implies that $Q^\sharp(\alpha,R)$ is finite for every $R>0$. We show that 
\eqref{limQ_0} can be slightly improved in the following form
\begin{equation}\label{lim>}
\liminf_{\alpha\rightarrow-\infty} \left(-\alpha Q^\sharp(\alpha,1)\right)>0.
\end{equation}
Let us consider the solution $\bar{\mathtt k}$ in \eqref{tor_sol} to the radial problem 
\begin{equation*} 
\left\{
\begin{array}
[c]{lll}%
\left( -\Delta\right)^{s} \bar k=1 & & \text{in }%
B_1\\
\\
\bar k=0 & & \text{on }\R^{N}\setminus B_1.
\end{array}
\right. %
\end{equation*}
Choosing $\psi=-\bar{\mathtt k}/\alpha$ as a test function in the definition \eqref{torgen_max} with $\Omega=B_1$, recalling \eqref{torgenrad_min} and using \eqref{upp_v}, we have
\begin{align*}
-\alpha Q^\sharp(\alpha,1)\ge&\frac 1\alpha[\bar{\mathtt k}]_{H^{s}(\RR)}-\int_{B_1}
\bar{\mathtt k}^2\di x +2\int_{B_1}
\bar{\mathtt k}\di x=\\
\\ 
=&\left(2+\frac 1\alpha\right)\int_{B_1}
\bar{\mathtt k}\di x-\int_{B_1}
\bar{\mathtt k}^2\di x\ge \left(2+\frac 1\alpha-C\right) 
\int_{B_1}
\bar{\mathtt k}\di x
\end{align*}
for some constant $C$ such that, in any dimension $N$, we have $2-C>0$.
Hence \eqref{lim>} follows.\\  
From \eqref{Qris} we have
\begin{equation*}
\lim_{R\rightarrow+\infty} \left(-\alpha Q^\sharp(\alpha,R)\right)=\lim_{R\rightarrow+\infty} R^N (-\alpha R^{2s}) Q^\sharp(\alpha R^{2s},1)
\end{equation*}
and \eqref{lim>} implies \eqref{R_neg_lim}. 

In the case $\alpha>0$, using again \eqref{Qris}, \Cref{prop_Qdom} (a) implies condition \eqref{R_pos_R} since $Q^\sharp(\alpha R^{2s},1)$ is finite if and only if 
\begin{equation*}
0<\alpha R^{2s}<\lambda_1(B_1).
\end{equation*}
On the other hand, \eqref{limQ_i} provides \eqref{R_pos_lim}.
\end{proof}

We now show that the functional $Q(\alpha,\Omega)$ can always be represented in terms of a ball contained in $\Omega^\sharp$.  

\begin{proposition}\label{prad}
Let $\Omega \subset \R^N$ be a bounded, open set with Lipschitz boundary.  
For every fixed $-\infty < \alpha < \lambda_1(\Omega)$, there exists a unique radius $R(\alpha) > 0$, with $B_{R(\alpha)} \subseteq\Omega^\sharp$, such that
\begin{equation*}\label{tor=rad}
    Q^\sharp(\alpha,R(\alpha))=Q\bigl(\alpha, B_{R(\alpha)}\bigr)=Q(\alpha,\Omega).
\end{equation*}
\end{proposition}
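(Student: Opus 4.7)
The plan is to regard $R\mapsto Q^\sharp(\alpha,R)$ as a strictly increasing continuous bijection from its interval of admissibility onto $(0,+\infty)$ and to invert it at the value $Q(\alpha,\Omega)$. Fix $\alpha\in(-\infty,\lambda_1(\Omega))$ and set $R^\ast=+\infty$ if $\alpha\le 0$, $R^\ast=(\lambda_1(B_1)/\alpha)^{1/(2s)}$ if $\alpha>0$, so that, by \Cref{prop_Qdom}(a), the function $R\mapsto Q^\sharp(\alpha,R)$ is defined precisely for $R\in(0,R^\ast)$.

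First I would record the elementary properties of this function. Continuity follows by combining the scaling identity \eqref{Qris} with the fact, due to \Cref{prop_Qalpha}, that $Q^\sharp(\cdot,1)$ is differentiable, hence continuous. For strict monotonicity I would take $0<R_1<R_2<R^\ast$: \Cref{prop_Qdom}(c) already yields $Q^\sharp(\alpha,R_1)\le Q^\sharp(\alpha,R_2)$, and to rule out equality I would plug the trivial extension of the generalized torsion function $\mathtt w_{R_1}$ into the variational characterization \eqref{torgen_max} on $B_{R_2}$: this extension realises the value $Q^\sharp(\alpha,R_1)$ but cannot coincide with the maximizer $\mathtt w_{R_2}$, since for $x\in B_{R_2}\setminus B_{R_1}$ the extension satisfies $(-\Delta)^s\mathtt w_{R_1}(x)<0\neq 1$ and thus fails to solve the equation in \eqref{torgen_pb} on $B_{R_2}$.

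For the endpoint behaviour, the scaling \eqref{Qris} gives $Q^\sharp(\alpha,R)=R^{N+2s}Q^\sharp(\alpha R^{2s},1)$; continuity in the first argument combined with $Q^\sharp(0,1)=T(B_1)\in(0,+\infty)$ then yields $\lim_{R\to 0^+}Q^\sharp(\alpha,R)=0$, while \Cref{prop_QR} delivers $\lim_{R\to (R^\ast)^-}Q^\sharp(\alpha,R)=+\infty$. Noting that $Q(\alpha,\Omega)$ is finite by \Cref{prop_Qdom}(a) and strictly positive (the generalized torsion function $\mathtt w$ is nonnegative by \Cref{positivity} and cannot vanish identically because of the inhomogeneous term in \eqref{torgen_pb}), the intermediate value theorem combined with strict monotonicity produces a unique $R(\alpha)\in(0,R^\ast)$ such that $Q^\sharp(\alpha,R(\alpha))=Q(\alpha,\Omega)$.

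It remains to verify the inclusion $B_{R(\alpha)}\subseteq\Omega^\sharp$, and here I anticipate the main subtlety. Let $R_\Omega$ denote the radius of $\Omega^\sharp$. If $\alpha<\lambda_1(\Omega^\sharp)$, then \Cref{prop_Qdom}(b) gives $Q^\sharp(\alpha,R(\alpha))=Q(\alpha,\Omega)\le Q(\alpha,\Omega^\sharp)=Q^\sharp(\alpha,R_\Omega)$, so strict monotonicity forces $R(\alpha)\le R_\Omega$. The delicate regime is $\alpha\in[\lambda_1(\Omega^\sharp),\lambda_1(\Omega))$ (possible only for $\alpha>0$, by Faber--Krahn), in which $Q(\alpha,\Omega^\sharp)$ is no longer finite and part (b) is unavailable; there I would instead observe that by the scaling \eqref{eigen-scal} one has $R^\ast=(\lambda_1(B_1)/\alpha)^{1/(2s)}\le (\lambda_1(B_1)/\lambda_1(\Omega^\sharp))^{1/(2s)}=R_\Omega$, so automatically $R(\alpha)<R^\ast\le R_\Omega$. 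The blow-up of $Q^\sharp(\alpha,\cdot)$ near $R^\ast$ provided by \Cref{prop_QR} is precisely what saves the argument in this second regime.
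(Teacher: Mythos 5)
Your proposal is correct, and it follows the same overall skeleton as the paper's proof: continuity of $R\mapsto Q^\sharp(\alpha,R)$ via the scaling identity \eqref{Qris} together with \Cref{prop_Qalpha}, the limit $Q^\sharp(\alpha,R)\to 0$ as $R\to 0^+$, the blow-up at the right endpoint from \Cref{prop_QR}, and an intermediate-value/strict-monotonicity argument. The two points where you genuinely diverge are instructive. First, strict monotonicity in $R$: the paper differentiates, obtaining $\frac{\partial}{\partial R}Q^\sharp(\alpha,R)=R^{N-1+2s}\bigl[(N+2s)\int_{B_1}\bar{\mathtt h}\,\di x+2s\alpha R^{2s}\int_{B_1}\bar{\mathtt h}^2\,\di x\bigr]>0$, which is immediate for $\alpha\ge 0$ but for $\alpha<0$ tacitly uses the bound $\bar{\mathtt h}\le -1/(\alpha R^{2s})$; your route (domain monotonicity from \Cref{prop_Qdom}(c) plus the observation that the zero extension of the maximizer on $B_{R_1}$ cannot be the unique maximizer on $B_{R_2}$, since it fails the Euler--Lagrange equation \eqref{torgen_pb} on the annulus) avoids any sign discussion, at the price of invoking uniqueness of the maximizer of \eqref{torgen_max} (strict concavity for $\alpha<\lambda_1(B_{R_2})$) and enough interior regularity (\Cref{remreg}) or a test-function argument to make ``$(-\Delta)^s\mathtt w_{R_1}<0\ne 1$'' rigorous, together with $\mathtt w_{R_1}\not\equiv 0$ (Lemma \ref{positivity} plus the source term). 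Second, the inclusion $B_{R(\alpha)}\subseteq\Omega^\sharp$: the paper's proof does not address it explicitly (and the later appeal to \Cref{prop_Qdom} in Theorem \ref{thm_comp} only covers $\alpha<\lambda_1(\Omega^\sharp)$), whereas your case analysis — \Cref{prop_Qdom}(b) plus monotonicity when $\alpha<\lambda_1(\Omega^\sharp)$, and the scaling bound $R(\alpha)<\widetilde R=(\lambda_1(B_1)/\alpha)^{1/(2s)}\le R^\sharp$ from \eqref{R_pos_R} and \eqref{eigen-scal} when $\lambda_1(\Omega^\sharp)\le\alpha<\lambda_1(\Omega)$ — closes this gap cleanly; your remark that the finiteness range \eqref{R_pos_R} is exactly what saves the second regime is the right observation. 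Your additional check that $Q(\alpha,\Omega)>0$ (so that the inverted value is interior to the range) is a small but welcome precision that the paper leaves implicit.
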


\begin{proof}
The continuity of $Q^\sharp(\alpha, R)$ and its differentiability with respect to $R$ can be easily proven by combining Proposition \ref{prop_Qalpha} with \eqref{Qris}.

\noindent Moreover, using \eqref{Qris} and \Cref{prop_Qdom} (c), we have
\begin{equation*}
\begin{split}
\dfrac \partial{\partial R}Q^\sharp(\alpha,R)=&R^{N-1+2s}\left[(N+2s)Q^\sharp(\alpha R^{2s},1)+2s\alpha R^{2s} \frac{\mathrm{d}}{\mathrm{d}\alpha} Q(\alpha R^{2s}, 1)\right]\\
=&R^{N-1+2s}\left[(N+2s)\int_{B_1}
\bar{\mathtt h}\,\di x+2s\alpha R^{2s}\int_{B_1}
\bar{\mathtt h}^2\di x\right]>0,\\
\end{split}
\end{equation*}
where $\bar{\mathtt h}$ is the solution to problem \eqref{torgenrad_pb_scal}. Hence, $Q^\sharp(\alpha,R)$ is strictly increasing with respect to $R$ for any fixed $\alpha$. 

From \eqref{Qris}, in view of the fact that, for a fixed $\alpha$, $Q^\sharp(\alpha R^{2s},1)$ goes to $Q^\sharp(0,1)$ as $R$ goes to 0, we have
\[\lim_{R\rightarrow0} Q^\sharp(\alpha,R)=\lim_{R\rightarrow0} R^{N+2s}Q^\sharp(\alpha R^{2s},1)=0.
\]
Using \Cref{prop_QR} we get the claim. 
\end{proof}

\section{The generalized Fractional Kohler-Jobin Inequality}\label{sec_compar}
In this section, we present a fundamental comparison result that will allow us to derive both the Kohler-Jobin and the reverse H\"older inequalities, highlighting their optimality and symmetry properties.

\subsection{A comparison result}

Before establishing the main comparison result, we first state the following lemma, whose proof follows the arguments in \cite{FPV,FV}. 

\begin{lemma}\label{dis1}
Let $\Omega \subset \R^N$ be a bounded, open set with Lipschitz boundary and let $-\infty <\alpha<\lambda_1(\Omega)$. Assume that $R(\alpha)$ is the unique radius determined by \Cref{prad} such that
$$Q(\alpha,\Omega)=Q(\alpha,B_{R(\alpha)}).$$
Let $\mathtt w$ be the solution to \eqref{torgen_pb} and $\bar{\mathtt w}$ be the solution to \eqref{torgenrad_pb} with $R=R(\alpha)$.
If $R^\sharp$ stands for the radius of $\Omega^\sharp$, then the following relations hold true
\begin{equation}
\label{PS_symm}
\frac{\gamma(N,s)}{2}\int_{B_r}\int_{B_r^c}\frac{\mathtt w^\sharp(x)-\mathtt w^\sharp(y)}{|x-y|^{N+2s}}\,\di x\di y
\leq \alpha\int_{B_r}\mathtt w^\sharp(x)\,\di x+|B_r|, \qquad 0\le r<R^\sharp,
\end{equation}
\begin{equation}
\label{PS_rad}
\frac{\gamma(N,s)}{2}\int_{B_r}\int_{B_r^c}\frac{\bar{\mathtt w}(x)-\bar{\mathtt w}(y)}{|x-y|^{N+2s}}\,\di x\di y
=\alpha\int_{B_r}\bar{\mathtt w}(x)\,\di x+|B_r|, \qquad 0\le r<R(\alpha).
\end{equation}
\end{lemma}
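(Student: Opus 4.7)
The plan is to derive both relations by taking, as test functions in the weak formulations of the generalized torsion problems, (approximations of) characteristic functions of super-level sets of $\mathtt w$ and $\bar{\mathtt w}$, and then to compare the resulting integrals via a fractional P\'olya--Szeg\H o type principle on level sets in the spirit of \cite{FV,FPV}.

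For \eqref{PS_rad}, since $\bar{\mathtt w}$ solves a problem with constant datum on the ball $B_{R(\alpha)}$, by uniqueness and Proposition~\ref{prop_polya} one verifies that $\bar{\mathtt w}$ is radially symmetric and decreasing, so that for every $0<r<R(\alpha)$ the ball $B_r$ coincides with the super-level set $\{\bar{\mathtt w}>t_r\}$, where $t_r=\bar{\mathtt w}(x)$ for $|x|=r$. Formally inserting $\varphi=\chi_{B_r}$ in the weak formulation of \eqref{torgenrad_pb}, and justifying this choice by passing to the limit in the admissible Lipschitz cutoffs
\[
\varphi_h(x)=\min\!\left\{1,\frac{(\bar{\mathtt w}(x)-t_r)_+}{h}\right\}\in X_0^s(B_{R(\alpha)}),\qquad h\to 0^+,
\]
one splits the symmetric double integral on $\mathbb{R}^N\times\mathbb{R}^N$ into its two equal contributions on $B_r\times B_r^c$ and $B_r^c\times B_r$, obtaining the equality \eqref{PS_rad}.

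For \eqref{PS_symm}, fix $0<r<R^\sharp$ and set $t=\mathtt w^*(\omega_N r^N)$, so that the super-level set $\Omega_t=\{\mathtt w>t\}$ satisfies $|\Omega_t|=\omega_N r^N=|B_r|$; the possible flat zones of $\mathtt w^*$ are handled by the usual rearrangement convention and do not affect the integrals. Repeating the previous argument with $\mathtt w$ and $\Omega_t$ in place of $\bar{\mathtt w}$ and $B_r$, one first obtains
\[
\frac{\gamma(N,s)}{2}\inte_{\mathbb{R}^N\times\mathbb{R}^N}\frac{(\mathtt w(x)-\mathtt w(y))(\chi_{\Omega_t}(x)-\chi_{\Omega_t}(y))}{|x-y|^{N+2s}}\,\di x\di y=\alpha\int_{\Omega_t}\mathtt w(x)\,\di x+|\Omega_t|,
\]
whose right-hand side already matches that of \eqref{PS_symm} by equimeasurability. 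The left-hand side is then compared with its counterpart for $\mathtt w^\sharp$ on $B_r$ via the nonlocal P\'olya--Szeg\H o principle on level sets of \cite{FV,FPV}, which asserts
\[
\inte_{\mathbb{R}^N\times\mathbb{R}^N}\frac{(\mathtt w(x)-\mathtt w(y))(\chi_{\Omega_t}(x)-\chi_{\Omega_t}(y))}{|x-y|^{N+2s}}\,\di x\di y\;\ge\;\inte_{\mathbb{R}^N\times\mathbb{R}^N}\frac{(\mathtt w^\sharp(x)-\mathtt w^\sharp(y))(\chi_{B_r}(x)-\chi_{B_r}(y))}{|x-y|^{N+2s}}\,\di x\di y.
\]
Reducing the last integral to $B_r\times B_r^c$ as before then yields \eqref{PS_symm}.

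The main technical obstacle is the rigorous passage to the limit $h\to 0^+$ in the Lipschitz approximations $\varphi_h$, which must exploit the $L^\infty$ bounds on $\mathtt w$ and $\bar{\mathtt w}$ from Remark~\ref{remreg} together with the uniform integrability of the kernel $|x-y|^{-N-2s}$ on the slab $\{t<\mathtt w<t+h\}\times(\mathbb{R}^N\setminus\Omega_t)$. A related delicate point is the handling of jumps of the distribution function $\mu_{\mathtt w}$, addressed by the choice $t=\mathtt w^*(\omega_N r^N)$ and by observing that the set $\{\mathtt w=t\}$ contributes zero to the double integrals. The conceptual cornerstone is the level-set fractional P\'olya--Szeg\H o inequality of \cite{FV,FPV}, which is precisely what converts the identity satisfied on the level sets of $\mathtt w$ into the desired inequality on the concentric balls associated with $\mathtt w^\sharp$.
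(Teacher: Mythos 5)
Your overall route---testing the weak formulation with truncations $\varphi_h$ that converge to characteristic functions of super-level sets and then invoking a Riesz/P\'olya--Szeg\H o type comparison for the nonlocal flux across level boundaries---is precisely the mechanism of the proofs in \cite{FV,FPV} that the paper relies on, so in spirit you are on the intended path. For \eqref{PS_rad} your argument is workable, but it is simpler (and this is what the paper does) to integrate the pointwise equation over $B_r$, using the smoothness of $\bar{\mathtt w}$ inside $B_{R(\alpha)}$ and the cancellation of the principal-value contribution of $B_r\times B_r$ by antisymmetry; this avoids both the approximation and the need to know that $\{\bar{\mathtt w}>t_r\}$ is exactly $B_r$ (your argument silently uses strict radial monotonicity of $\bar{\mathtt w}$). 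Also beware of bookkeeping: splitting $\tfrac{\gamma(N,s)}{2}\inte_{\R^N\times\R^N}$ into the regions $B_r\times B_r^c$ and $B_r^c\times B_r$ produces $\gamma(N,s)\int_{B_r}\int_{B_r^c}$, so the constant should be checked against the statement. Concerning the limit $h\to0^+$: dominated convergence as you describe it is not available for $s\ge\tfrac12$, since the natural majorant $|\mathtt w(x)-\mathtt w(y)|\,|x-y|^{-N-2s}$ is not integrable across the diagonal; however, the integrand obtained with $\varphi_h$ is nonnegative (because $\varphi_h$ is a nondecreasing function of $\mathtt w$), so Fatou's lemma yields the inequality ``$\le$'' for the limit bilinear form, and since this is the only direction your chain of estimates uses, that step should be stated as an inequality rather than the identity you claim.

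The genuine gap is the treatment of the radii $r$ for which $\omega_N r^N$ is not attained by the distribution function, i.e. when $t=\mathtt w^*(\omega_N r^N)$ satisfies $|\{\mathtt w=t\}|>0$ and $|\Omega_t|=\mu_{\mathtt w}(t)<\omega_N r^N$ (here $\Omega_\tau=\{\mathtt w>\tau\}$). At such $r$ your right-hand sides do not ``match by equimeasurability'': $\alpha\int_{\Omega_t}\mathtt w\,\di x+|\Omega_t|$ differs from $\alpha\int_{B_r}\mathtt w^\sharp\,\di x+|B_r|$ by $(1+\alpha t)\bigl(\omega_N r^N-\mu_{\mathtt w}(t)\bigr)$, and, more seriously, the level-set P\'olya--Szeg\H o inequality you invoke now compares the flux of $\mathtt w$ across $\partial\Omega_t$ with the flux of $\mathtt w^\sharp$ across $\partial B_r$, where $B_r$ strictly contains $(\Omega_t)^\sharp$. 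The Riesz-rearrangement argument behind that inequality only gives the comparison with the ball $(\Omega_t)^\sharp$: writing $\mathtt w(x)-\mathtt w(y)$ by the layer-cake formula, the levels $\tau<t$ contribute terms $\int_{\Omega_t}\int_{\Omega_\tau^c}|x-y|^{-N-2s}\,\di x\di y$, which are bounded from below by the corresponding integrals for the balls $(\Omega_t)^\sharp\subset(\Omega_\tau)^\sharp$, not by the larger quantity with $B_r$ in place of $(\Omega_t)^\sharp$ that your inequality requires; and the set $\{\mathtt w=t\}$ certainly need not ``contribute zero''---the difficulty arises exactly because it has positive measure. Since the lemma is stated, and later used in the proof of \Cref{thm_comp}, for every $0\le r<R^\sharp$, these plateau radii cannot be discarded; handling them is precisely the delicate part of the quoted steps of \cite{FV,FPV}, and your sketch neither supplies a substitute argument nor proves that $\mathtt w$ has no level sets of positive measure.
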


\begin{proof}
We only provide a sketch of the argument. By following Step~1 of the proof of Theorem~1.1 in \cite{FPV}, 
or alternatively Steps~1-2 in the proof of Theorem~3.1 in \cite{FV}, we directly obtain \eqref{PS_symm}. Equality \eqref{PS_rad} follows from a direct integration over the ball $B_r$ of the equation in problem \eqref{torgenrad_pb} with $R=R(\alpha)$. 
\end{proof}

We next prove a comparison for $\mathtt w$ and $\bar{\mathtt w}$ in term of their mass concentrations, that will be the key tool in the subsequent analysis.

\begin{theorem}\label{thm_comp}
Under the same assumptions as in Lemma \ref{dis1}, we have
\begin{equation}
    \label{q-comp}
\int_{B_r} \mathtt w^\sharp(x)\,\mathrm{d}x 
\le
\int_{B_r} \bar{\mathtt w}(x) \,\mathrm{d}x, \qquad r \ge 0.
\end{equation}
\end{theorem}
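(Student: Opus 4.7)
Set $U(r) := \int_{B_r}\mathtt w^\sharp(x)\,\di x$ and $V(r) := \int_{B_r}\bar{\mathtt w}(x)\,\di x$ for $r\ge 0$; both are nonnegative, nondecreasing and absolutely continuous. By Proposition \ref{prad} one has $R(\alpha)\le R^\sharp$ and
$U(R^\sharp) = \int_\Omega \mathtt w = Q(\alpha,\Omega) = Q(\alpha,B_{R(\alpha)}) = V(R(\alpha))$.
Consequently, for $r\ge R(\alpha)$ the inequality \eqref{q-comp} is immediate since $V(r) = Q(\alpha,\Omega) \ge U(r)$, so the assertion only needs to be proven on the range $0\le r<R(\alpha)$.

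On this range, the plan is to subtract the equality \eqref{PS_rad} from the inequality \eqref{PS_symm} of Lemma \ref{dis1}, which by linearity yields
\begin{equation*}
\frac{\gamma(N,s)}{2}\int_{B_r}\!\!\int_{B_r^c}\frac{h(x)-h(y)}{|x-y|^{N+2s}}\,\di x\di y\;\le\;\alpha\bigl(U(r)-V(r)\bigr),
\end{equation*}
for the radial (sign-changing) function $h:=\mathtt w^\sharp-\bar{\mathtt w}$. The central step is then to rewrite the left-hand side as a weighted one-dimensional integral with a strictly positive kernel $K(\rho,\sigma)$ produced by integrating $|x-y|^{-(N+2s)}$ over the concentric spheres $\{|x|=\rho\}$ and $\{|y|=\sigma\}$. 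Combining Fubini, integration by parts and the identity $U'(\rho)-V'(\rho) = N\omega_N\rho^{N-1}h^\natural(\rho)$ (where $h^\natural$ denotes the radial profile of $h$), this recasts the inequality into a closed integral inequality for the scalar function $\psi(r):=V(r)-U(r)$, complemented by the boundary data $\psi(0)=0$, $\lim_{r\to\infty}\psi(r)=0$ and $\psi\ge 0$ on $[R(\alpha),\infty)$.

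To close the argument, one would argue by contradiction: assuming $\psi$ attains a strictly negative minimum at some $r_*\in(0,R(\alpha))$, the first-order condition $\psi'(r_*)=0$ forces $h^\natural(r_*)=0$, while the monotonicity of $\psi$ on either side of $r_*$ and the global lower bound $\psi\ge\psi(r_*)$ control the signs of $h^\natural$ on the subintervals $(0,r_*)$, $(r_*,R(\alpha))$ and $(R(\alpha),R^\sharp)$. Plugging these pointwise bounds into the reformulated inequality and invoking the strict positivity of $K$ should force its left-hand side to be strictly larger than $\alpha\psi(r_*)$, producing the desired contradiction; this is the maximum-principle scheme underlying the symmetrization arguments in \cite{FPV,FV}. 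The main obstacle is precisely the derivation of the positive-kernel representation and the accompanying Fubini/integration-by-parts step, which require a careful treatment of the singularity of $|x-y|^{-(N+2s)}$ near the diagonal, together with the non-classical feature that here the supports $B_{R^\sharp}$ and $B_{R(\alpha)}$ do not coincide, so the endpoint matching for $\psi$ is not of the standard equimeasurable Talenti type; some case analysis in the sign of $\alpha$ is likely unavoidable in the final step.
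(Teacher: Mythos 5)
The reduction to the range $0 \le r < R(\alpha)$ via the mass identity $Q(\alpha,\Omega)=Q(\alpha,B_{R(\alpha)})$ and the subtraction of \eqref{PS_rad} from \eqref{PS_symm} are fine, but from that point on your argument is a plan rather than a proof, and the step you defer is exactly the one that does not work as described. Knowing only that $\psi=V-U$ attains a negative minimum at $r_*$ gives you $h^\natural(r_*)=0$ (where $\psi$ is differentiable) and $\psi\ge\psi(r_*)$, but this does not control the sign of $h^\natural=-\psi'/(N\omega_N\rho^{N-1})$ on the subintervals $(0,r_*)$, $(r_*,R(\alpha))$, $(R(\alpha),R^\sharp)$: $\psi$ can oscillate above its minimum value, so the pointwise sign bounds you want to plug into the kernel representation are simply not available, and the strict positivity of $K(\rho,\sigma)$ by itself cannot produce the contradiction. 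Moreover, for a nonlocal operator a pointwise evaluation at an interior minimum is not the right maximum-principle mechanism: the sign of the nonlocal term at $r_*$ depends on the global behaviour of $h$, including the annulus where $\bar{\mathtt w}$ vanishes but $\mathtt w^\sharp$ need not.

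The paper closes the argument by a different, and essentially unavoidable, device: the ball-flux relations \eqref{PS_symm}--\eqref{PS_rad} are converted (via \cite[eq.\ (5.28)]{FV}) into differential inequalities for the fractional Laplacian \emph{in dimension $N+2$} acting on the averages $W(r)=r^{-N}\int_0^r\mathtt w^\sharp(\rho)\rho^{N-1}\,\di\rho$ and $\overline W(r)$, namely $(-\Delta)^s_{\R^{N+2}}W\le\alpha W+\tfrac1N$ on $[0,R^\sharp)$ and equality for $\overline W$ on $[0,R(\alpha))$. Then, on the set where $Z=W-\overline W>0$ (contained in $[0,R(\alpha))$ by the mass constraint), one gets $(-\Delta)^s_{\R^{N+2}}Z<\lambda_{1,N+2}(B^{N+2}_{R(\alpha)})Z$, using $\alpha<\lambda_1(B_{R(\alpha)})$ together with the strict monotonicity of ball eigenvalues with respect to the dimension (Remark \ref{dydarem}, based on Dyda's result); testing with $Z^+$ and comparing with the variational characterization of $\lambda_{1,N+2}$ gives the contradiction by an energy argument, not a pointwise one, and no case analysis in the sign of $\alpha$ is needed. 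Your proposal contains neither ingredient (the dimension lift nor the eigenvalue comparison across dimensions), so as it stands the key step is a genuine gap rather than a routine verification.
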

\begin{proof}
First of all, we observe that, in view of  \Cref{prop_Qdom}, $R(\alpha)\le R^\sharp$.
\\
For $r=|x|$ we set $\mathtt w^\sharp(r)=\mathtt w^\sharp(|x|)$, $\bar{\mathtt w}(r)=\bar{\mathtt w}(|x|)$ and we denote 
$$
W(r)=\frac{1}{r^N} \int_0^{r} \mathtt w^\sharp(\rho)\rho^{N-1}\,\di \rho, \qquad {\overline W}(r)=\frac{1}{r^N} \int_0^{r} \bar{\mathtt w}(\rho)\rho^{N-1}\,\di \rho.
$$
We recall (see \cite[eq. (5.28)]{FV}) that \eqref{PS_symm} and \eqref{PS_rad} imply 

\begin{equation}
\label{torgen_eqm}
(-\Delta)^s_{\R^{N+2}} W(r)\leq\alpha\,W(r)+\frac 1 N,\qquad 0 \le r < R^\sharp,
\end{equation}
and
\begin{equation}
\label{torgenrad_eqm}
(-\Delta)^s_{\R^{N+2}}{\overline W}(r)=\alpha\,{\overline W}(r)+\frac 1 N,\qquad 0\le r < R(\alpha).
\end{equation}
\\
Being $Q(\alpha,\Omega)= Q(\alpha,B_{R(\alpha)})$, \eqref{int_torgen} gives
\begin{equation}
\label{vincolo}
||\mathtt w||_{L^1(\Omega)}=||\bar {\mathtt w}||_{L^1\left(B_{R(\alpha)}\right)} \quad \Longleftrightarrow \quad \int_0^{R^\sharp} \mathtt w^\sharp(\rho)\rho^{N-1}\di \rho=\int_0^{R(\alpha)} \bar {\mathtt w}(\rho)\rho^{N-1}\di \rho.
\end{equation}
From \eqref{vincolo}, we get that, for $R(\alpha)\le r \le R^\sharp$,
$$
W(r)\le \frac{1}{r^N} \int_0^{R^\sharp} \mathtt w^\sharp (\rho)\rho^{N-1}\,\di \rho =\frac{1}{r^N}\int_0^{R(\alpha)}\bar{\mathtt w}(\rho)\rho^{N-1}\, \di \rho=\overline{W}(r).
$$
We want to show that
\begin{equation*}
W(r)\leq {\overline W}(r),\qquad 0 \le r<R(\alpha).
\end{equation*}
Assume by contradiction that there exists $(r_0,r_1)\subseteq [0,R(\alpha))$ such that the function $W(r)-\overline{W} (r)>0$ in $(r_0,r_1)$. 
Denote $Z=W-{\overline W}$; hence $Z^+ \not\equiv 0$.
By the consideration above, we have
\[
A:=\left\{Z>0\right\}\subset [0,R(\alpha)).
\]
From \eqref{torgen_eqm} and \eqref{torgenrad_eqm} we deduce, being $\alpha<\lambda_1 (B_{R(\alpha)})$,
$$(-\Delta)_{\R^{N+2}}^sZ(r)\leq \lambda_1 (B_{R(\alpha)}) Z(r),\qquad 
0 \le r < R(\alpha).$$
Since the first eigenvalue on the ball of radius $R(\alpha)$ is strictly increasing with respect to the dimension (see Remark \ref{dydarem}), denoted by $\lambda_{1,N+2}\left(B_{R(\alpha)}^{N+2}\right)$ the first eigenvalue of the ball $B_{R(\alpha)}^{N+2}$ with radius $R(\alpha)$ in dimension $N+2$, we can write
\begin{equation}
\label{Lap_N+2}
(-\Delta)_{\R^{N+2}}^sZ(r)< \lambda_{1,N+2}(B_{R(\alpha)}^{N+2}) Z(r) \quad \text{ in } A.
\end{equation}
Denoting by $|\cdot|_{N+2}$ the modulus in $\R^{N+2}$, we put $\mathcal{A}=\left\{x\in \R^{N+2}:|x|_{N+2}\in A\right\}$, so that (by abuse of notation) the $(N+2)$ variables function $Z=Z(|x|_{N+2})$ is positive only in $\mathcal{A}$. If we test inequality \eqref{Lap_N+2} with $Z^+$  we get a contradiction, since 
\begin{align*}
[Z^+]^2_{H^s(\R^{N+2})}& \le 2\int_{\R^{N+2}} (-\Delta)^s Z(x)\, Z^+(x) 
\di x
&< \lambda_{1,N+2}\left(B_{R(\alpha)}^{N+2}\right)||Z^+||^2_{L^2(\R^{N+2})}\le [Z^+]^2_{H^s(\R^{N+2})}.
\end{align*}
Hence, $W(r) \le {\overline W}(r)$ for every $r \in(0,R(\alpha))$, 
that is \eqref{q-comp}.
\end{proof}

\subsection{The generalized fractional Kohler-Jobin inequality}
We start by proving the following
\begin{proposition}\label{dec}
Let $\Omega \subset \mathbb{R}^N$ be a bounded, open set with Lipschitz boundary. and let $-\infty<\alpha <\lambda_1(\Omega)$. Let us denote by $R(\alpha)>0$ the radius such that
\[
Q(\alpha,\Omega) = Q(\alpha, B_{R(\alpha)}).
\] 
Then the mapping $\alpha \mapsto R(\alpha)$ is decreasing.
\end{proposition}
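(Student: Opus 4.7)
The plan is to differentiate implicitly the defining relation $Q(\alpha,\Omega)=Q^\sharp(\alpha,R(\alpha))$ and read off the sign of $R'(\alpha)$ using the mass concentration comparison of \Cref{thm_comp}.

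First, I would verify that $\alpha\mapsto R(\alpha)$ is of class $C^1$ on $(-\infty,\lambda_1(\Omega))$. From \Cref{prop_Qalpha} the map $\alpha\mapsto Q^\sharp(\alpha,R)$ is $C^1$, and the explicit formula computed in the proof of \Cref{prad} yields
\[
\frac{\partial}{\partial R}Q^\sharp(\alpha,R)=R^{N-1+2s}\left[(N+2s)\int_{B_1}\bar{\mathtt h}\,\di x+2s\alpha R^{2s}\int_{B_1}\bar{\mathtt h}^2\,\di x\right]>0.
\]
Since this partial derivative is continuous in $(\alpha,R)$ and strictly positive, the implicit function theorem applied to $Q^\sharp(\alpha,R)=Q(\alpha,\Omega)$ gives $R\in C^1$ with
\[
R'(\alpha)=\frac{\partial_\alpha Q(\alpha,\Omega)-\partial_\alpha Q^\sharp(\alpha,R(\alpha))}{\partial_R Q^\sharp(\alpha,R(\alpha))}.
\]

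Next, by the derivative formula in \Cref{prop_Qalpha}, the numerator rewrites as
\[
\int_\Omega \mathtt{w}(x)^2\,\di x-\int_{B_{R(\alpha)}}\bar{\mathtt{w}}(x)^2\,\di x,
\]
where $\mathtt{w}$ solves \eqref{torgen_pb} on $\Omega$ and $\bar{\mathtt{w}}$ solves \eqref{torgenrad_pb} on $B_{R(\alpha)}$. The essential step is to show this is non-positive. This is precisely where the mass concentration comparison enters: \Cref{thm_comp} gives $\mathtt{w}^\sharp\prec\bar{\mathtt{w}}$ (both extended by zero outside their supports), and the $L^p$-norm monotonicity deduced from \Cref{Propconves} with $p=2$ then yields
\[
\|\mathtt{w}\|_{L^2(\Omega)}=\|\mathtt{w}^\sharp\|_{L^2(\R^N)}\le\|\bar{\mathtt{w}}\|_{L^2(B_{R(\alpha)})}.
\]
Combined with the positivity of the denominator established above, this gives $R'(\alpha)\le 0$, i.e., $R$ is non-increasing on $(-\infty,\lambda_1(\Omega))$, which is exactly what the subsequent limit argument toward \eqref{tor-alpha} requires.

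The only delicate point is the legitimacy of the implicit differentiation, but the strict monotonicity and $C^1$-regularity of $Q^\sharp$ in $R$ make this routine. The substantive content lies in translating the mass concentration inequality \Cref{thm_comp} into an $L^2$ inequality, which is provided directly by \Cref{Propconves}; no strict comparison is needed, consistent with the authors' remark that their method does not address the equality case.
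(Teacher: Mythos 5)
Your proposal is correct and follows essentially the same route as the paper: differentiate the identity $Q(\alpha,\Omega)=Q^\sharp(\alpha,R(\alpha))$, identify the numerator with $\int_\Omega \mathtt{w}^2\,\di x-\int_{B_{R(\alpha)}}\bar{\mathtt{w}}^2\,\di x$ via \Cref{prop_Qalpha}, bound it by zero through \Cref{thm_comp} together with the $L^2$-consequence of \Cref{Propconves}, and conclude from the strict positivity of $\partial_R Q^\sharp$ established in the proof of \Cref{prad}. Your explicit appeal to the implicit function theorem to justify the differentiability of $\alpha\mapsto R(\alpha)$ is a small but welcome addition of rigor over the paper's bare chain-rule computation, and your observation that the argument only yields $R'(\alpha)\le 0$ (monotone non-increasing) matches what the paper actually proves and uses.
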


\begin{proof}
Using the notation \eqref{torgenrad_min}, we have
\[\dfrac\di{\di\alpha}Q(\alpha,\Omega)=\dfrac\di{\di\alpha}Q^\sharp(\alpha,R(\alpha))=
\frac\partial{\partial\alpha}Q^\sharp (\alpha,R(\alpha))+R'(\alpha) \frac\partial{\partial R}Q^\sharp (\alpha,R(\alpha)).
\]
Let $\mathtt w$ be the solution to \eqref{torgen_pb} and $\bar{\mathtt w}$ be the solution to \eqref{torgenrad_pb}; then  \Cref{prop_Qalpha} provides
$$
R'(\alpha)\dfrac\partial{\partial R}Q^\sharp(\alpha,R(\alpha))=\int_\Omega |\mathtt w(x)|^2\, \di x-\int_{B_{R(\alpha)}} |\bar{\mathtt w} (x)|^2\, \di x.
$$
By Theorem \ref{thm_comp} we have
$$
R'(\alpha)\dfrac\partial{\partial R}Q^\sharp(\alpha,R(\alpha))\le 0
$$
and, taking into account \Cref{prop_Qdom}(c), we get the claim.
\end{proof}

In the end, we prove a nonlocal version of the classical Kohler-Jobin inequality.

\begin{theorem}\label{thm_KJ}
Under the same assumptions as in Proposition \ref{dec}, we have
\[
\lambda_1(\Omega) \;\geq\; \lambda_1\bigl(B_{R(\alpha)}\bigr).
\]
\end{theorem}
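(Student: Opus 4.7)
The strategy is to deduce the inequality by letting $\alpha \to \lambda_1(\Omega)^-$ along the one-parameter family of balls $\{B_{R(\alpha)}\}$ and exploiting the fact that, as $Q(\alpha,\Omega)$ blows up, the radius $R(\alpha)$ stabilizes precisely at the value for which $\lambda_1$ of the ball equals $\lambda_1(\Omega)$.

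First, by \Cref{dec} the map $\alpha \mapsto R(\alpha)$ is decreasing on $(-\infty, \lambda_1(\Omega))$, and by \Cref{prad} we have $B_{R(\alpha)} \subseteq \Omega^\sharp$, so $R(\alpha) \in (0, R^\sharp]$ with $R^\sharp$ the radius of $\Omega^\sharp$. Consequently the limit
$$
R^* := \lim_{\alpha \to \lambda_1(\Omega)^-} R(\alpha) = \inf_{\alpha < \lambda_1(\Omega)} R(\alpha) \in [0, R^\sharp]
$$
exists, and $R(\alpha) \geq R^*$ for every $\alpha < \lambda_1(\Omega)$.

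Next, I would identify $R^*$ by combining the scaling identity \eqref{Qris}, $Q^\sharp(\alpha, R) = R^{N+2s}\, Q^\sharp(\alpha R^{2s}, 1)$, with the blow-up $Q(\alpha, B_{R(\alpha)}) = Q(\alpha,\Omega) \to +\infty$ from \eqref{limQ_i}. Since $R(\alpha)$ is bounded above, the prefactor $R(\alpha)^{N+2s}$ alone cannot drive the blow-up, so necessarily $Q^\sharp(\alpha R(\alpha)^{2s}, 1) \to +\infty$. By \Cref{prop_QR}, such a divergence of $Q^\sharp(\,\cdot\,,1)$ occurs only as its argument tends to $\lambda_1(B_1)^-$, whence $\alpha R(\alpha)^{2s} \to \lambda_1(B_1)$. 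This in particular rules out the degenerate case $R^*=0$ (which would force $\alpha R(\alpha)^{2s} \to \lambda_1(\Omega)\cdot 0 = 0$), and passing to the limit yields $(R^*)^{2s} = \lambda_1(B_1)/\lambda_1(\Omega)$; via the scaling law \eqref{eigen-scal}, this rewrites as $\lambda_1(B_{R^*}) = \lambda_1(\Omega)$.

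To conclude, I would use that $R \mapsto \lambda_1(B_R) = R^{-2s}\lambda_1(B_1)$ is decreasing: from $R(\alpha) \geq R^*$ we read off
$$
\lambda_1(B_{R(\alpha)}) \;\leq\; \lambda_1(B_{R^*}) \;=\; \lambda_1(\Omega),
$$
which is the claimed inequality. The main technical step is the second one: extracting the precise asymptotic value of $R(\alpha)$ from the qualitative blow-up of $Q(\alpha,\Omega)$ and ruling out the collapse $R^*=0$. Everything else reduces to the monotonicity and scaling already established in earlier sections. As the authors note, this limiting procedure is also the reason why an equality-case analysis eludes the method.
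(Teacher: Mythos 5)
Your argument is correct and takes essentially the same route as the paper: both proofs pass to the limit $\alpha \to \lambda_1(\Omega)^-$, combine the monotonicity of $R(\alpha)$ (Proposition \ref{dec}) with the blow-up \eqref{limQ_i}, the scaling identity \eqref{Qris} and the finiteness threshold of $Q^\sharp$ (Proposition \ref{prop_QR}) to identify $\lim_{\alpha\to\lambda_1(\Omega)^-}R(\alpha)$ as the radius $R^*$ with $\lambda_1(B_{R^*})=\lambda_1(\Omega)$, and then conclude from $R(\alpha)\ge R^*$ and the monotonicity of $\lambda_1$ on balls. The only cosmetic difference is that the paper first bounds $R(\alpha)<(\lambda_1(B_1)/\alpha)^{1/(2s)}$ and rules out a strictly smaller limit by contradiction, whereas you extract $\alpha R(\alpha)^{2s}\to\lambda_1(B_1)$ directly (where, strictly speaking, the boundedness of $Q^\sharp(\cdot,1)$ away from $\lambda_1(B_1)$ comes from Proposition \ref{prop_Qdom}(a) and the monotonicity in $\alpha$ of Proposition \ref{prop_Qalpha}).
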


\begin{proof}
We observe that, being $Q(\alpha,B_{R(\alpha)})$ finite, from Proposition \ref{prop_QR} we deduce that, for any $\alpha$,
\[R(\alpha)<\left(\frac{\lambda_1(B_1)}\alpha\right)^{\frac 1{2s}}.
\]
Hence, the monotonicity of $R(\alpha)$ implies
\[\exists \, \ell=\lim_{\alpha\rightarrow\lambda_1(\Omega)^{-}}R(\alpha)\le\left(\frac{\lambda_1(B_1)}{\lambda_1(\Omega)}\right)^{\frac 1{2s}}.
\]
 If, by contradiction,
\[
\ell<\left(\frac{\lambda_1(B_1)}{\lambda_1(\Omega)}\right)^{\frac 1{2s}},
\]
in view \Cref{prop_QR}, it would follow
\[\lim_{\alpha\rightarrow \lambda_1(\Omega)^{-}} Q(\alpha,\Omega)=\lim_{\alpha\rightarrow \lambda_1(\Omega)^{-}} Q^\sharp(\alpha,R(\alpha))<+\infty,
\]
in contrast with \eqref{limQ_i}. Then
\[\lim_{\alpha\rightarrow\lambda_1(\Omega)^{-}}R(\alpha)=\left(\frac{\lambda_1(B_1)}{\lambda_1(\Omega)}\right)^{\frac 1{2s}}
=R(\lambda_1(\Omega)),
\]
where, with an abuse of notation, $R(\lambda_1(\Omega))$ denotes the radius of the ball having the same first eigenvalue as $\Omega$. Then, the monotonicity of $R(\alpha)$ gives $R(\alpha)\ge R(\lambda_1(\Omega))$. Finally, being the first eigenvalue decreasing with respect to the inclusion of sets, we get
$$\lambda_1(B_{R(\lambda_1(\Omega))})=\lambda_1(\Omega)\ge \lambda_1(B_{R(\alpha)}).$$
\end{proof}

\begin{remark}
As in the local case, inequality \eqref{KJ_ineq} implies the Faber-Krahn inequality \eqref{FK}. Indeed, let $B_{R}$ the ball such that $T(B_R)=T(\Omega)$: from \eqref{SV} we deduce $T(B_R)=T(\Omega)\le T(\Omega^\sharp)$, hence $B_R \subseteq \Omega^\sharp$ and finally from \eqref{KJ_ineq} we get $$\lambda_1(\Omega) \ge \lambda_1(B_R) \ge \lambda_1(\Omega^\sharp).$$
\end{remark}

\section{The fractional reverse H\"older inequality}\label{pay_sec}

By adapting the same arguments used in the proof of Theorem \ref{thm_comp}, we can show a reverse H\"older inequality for eigenfunctions corresponding to the first eigenvalue $\lambda_1(\Omega)$ of a bounded, open set $\Omega \subset \R^N$ with Lipschitz boundary.
We start by fixing some notation.

Let $\mathtt u_1 > 0$ be a fixed eigenfunction corresponding to $\lambda_1(\Omega)$, that is let $\mathtt u_1$ be a solution to the following eigenvalue problem
\begin{equation}\label{PP}
\begin{cases}
(-\Delta)^su_1=\lambda_1(\Omega) \,u_1\qquad&\text{ in }\Omega,
\\
u_1=0& \text{ on }\mathbb{R}^N\setminus\Omega.
\end{cases}
\end{equation}

Let $B_{R_1}\subset \R^N$ be the ball (centered at the origin) having the same first eigenvalue as $\Omega$, that is $\lambda_1(B_{R_1})=\lambda_1(\Omega)$. 

As in the previous sections, let $\Omega^\sharp$ be the ball (centered at the origin) with the same measure as $\Omega$ and let us denote by $R^\sharp$ its radius. By the Faber-Krahn inequality (\Cref{FK_prop}) and the monotonicity of $\lambda_1$ with respect to the inclusion of sets, we immediately deduce that $R_1 \le R^\sharp$.

Let $\bar{\mathtt u}_1$ 
be the positive eigenfunction corresponding to $\lambda_1(B_{R_1})$ such that
\begin{equation}\label{normeq}
||\bar{\mathtt u}_1||_{L^1(B_{R_1})}=||\mathtt u_1||_{L^1(\Omega)}.
\end{equation}
In other words, let $\bar{\mathtt u}_1$ satisfy \eqref{normeq} and be a positive solution to the following eigenvalue problem
\begin{equation}\label{PPv}
\begin{cases}
(-\Delta)^s \bar u = \lambda_1(\Omega) \bar u & \text{in } B_{R_1}, \\
\bar u = 0 & \mbox{on }\mathbb{R}^N\setminus B_{R_1}.
\end{cases}
\end{equation}
We first prove that $\mathtt u_1 \prec \bar{\mathtt u}_1$.

\begin{proposition}\label{PR}
Let $\mathtt u_1$ and $\bar{\mathtt u}_1$ be defined as above. Then, 
\begin{equation*}\label{qq}
\int_{B_r}\mathtt u_1^\sharp(x)\, \di x \le \int_{B_r} \bar{\mathtt u}_1(x)\, \di x,\quad\quad  r\ge 0.
\end{equation*}
\end{proposition}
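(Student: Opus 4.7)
The plan is to mirror the argument of \Cref{thm_comp}, replacing the constant source term of the generalized torsion problem with the eigenvalue term $\lambda_{1}(\Omega)\mathtt u_{1}$. First I would apply the symmetrization machinery from \cite{FPV,FV} (as in Lemma \ref{dis1}) to the eigenvalue problems \eqref{PP} and \eqref{PPv}, obtaining
\begin{equation*}
\frac{\gamma(N,s)}{2}\int_{B_{r}}\int_{B_{r}^{c}}\frac{\mathtt u_{1}^{\sharp}(x)-\mathtt u_{1}^{\sharp}(y)}{|x-y|^{N+2s}}\,\di x\di y\le \lambda_{1}(\Omega)\int_{B_{r}}\mathtt u_{1}^{\sharp}(x)\,\di x,\qquad 0\le r<R^{\sharp},
\end{equation*}
together with the corresponding radial identity for $\bar{\mathtt u}_{1}$ on $[0,R_{1})$. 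Introducing
\begin{equation*}
U(r)=\frac{1}{r^{N}}\int_{0}^{r}\mathtt u_{1}^{\sharp}(\rho)\rho^{N-1}\di\rho,\qquad \bar U(r)=\frac{1}{r^{N}}\int_{0}^{r}\bar{\mathtt u}_{1}(\rho)\rho^{N-1}\di\rho,
\end{equation*}
the $(N+2)$-dimensional reduction of \cite[eq.~(5.28)]{FV}, used already in \Cref{thm_comp}, converts these into
\begin{equation*}
(-\Delta)^{s}_{\R^{N+2}}U(r)\le \lambda_{1}(\Omega)U(r),\quad 0\le r<R^{\sharp},\qquad (-\Delta)^{s}_{\R^{N+2}}\bar U(r)=\lambda_{1}(\Omega)\bar U(r),\quad 0\le r<R_{1}.
\end{equation*}

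Next, the $L^{1}$ normalization \eqref{normeq}, combined with the fact that $\mathtt u_{1}^{\sharp}$ vanishes outside $B_{R^{\sharp}}$ and $\bar{\mathtt u}_{1}$ vanishes outside $B_{R_{1}}$, forces $U(r)=\bar U(r)$ on $[R^{\sharp},+\infty)$, and hence by enlarging the integration range to $R^{\sharp}$ yields $U(r)\le \bar U(r)$ throughout $[R_{1},+\infty)$. It therefore suffices to establish $U\le \bar U$ on $[0,R_{1})$. Supposing by contradiction that the set $A=\{U>\bar U\}$ has positive measure, the previous step forces $A\subset [0,R_{1})$, and subtracting the two displayed relations gives, with $Z=U-\bar U$,
\begin{equation*}
(-\Delta)^{s}_{\R^{N+2}}Z(r)\le \lambda_{1}(\Omega)Z(r)=\lambda_{1}(B_{R_{1}})Z(r)\qquad\text{in }A.
\end{equation*}
By \Cref{dydarem}, $\lambda_{1}(B_{R_{1}})<\lambda_{1,N+2}(B_{R_{1}}^{N+2})$. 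Lifting $Z$ to the radial function on $\R^{N+2}$ supported in $\mathcal A\subset B_{R_{1}}^{N+2}$ and testing the strict inequality against $Z^{+}\in X_{0}^{s}(B_{R_{1}}^{N+2})$ produces, via the variational characterization of the first eigenvalue, the contradiction
\begin{equation*}
[Z^{+}]^{2}_{H^{s}(\R^{N+2})}\le 2\int_{\R^{N+2}}(-\Delta)^{s}Z(x)\,Z^{+}(x)\di x<\lambda_{1,N+2}(B_{R_{1}}^{N+2})\|Z^{+}\|^{2}_{L^{2}(\R^{N+2})}\le [Z^{+}]^{2}_{H^{s}(\R^{N+2})},
\end{equation*}
in complete parallel with the last step of \Cref{thm_comp}.

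The main obstacle is purely bookkeeping: carefully matching the boundary of validity of the inequality (at $R^{\sharp}$) with that of the radial identity (at $R_{1}$), which is handled by noticing that $\mathtt u_{1}^{\sharp}$ and $\bar{\mathtt u}_{1}$ have equal mass but are supported on balls of possibly different radii $R^{\sharp}\ge R_{1}$. Once this is arranged, the rest of the proof is formally identical to the torsion comparison, since the inhomogeneous ``$+1$'' term disappears in the eigenvalue equation and no separate treatment of the source is required.
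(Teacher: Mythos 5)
Your proposal is correct and follows essentially the same route as the paper: the symmetrization inequalities of Lemma \ref{dis}, the passage to the averaged functions $U,\bar U$ satisfying the $(N+2)$-dimensional relations, the use of the normalization \eqref{normeq} to handle $r\ge R_1$, and the contradiction via \Cref{dydarem} and testing with $Z^{+}$, exactly as in the paper's reduction to the argument of \Cref{thm_comp}. No gaps worth noting.
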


Before proving Proposition \ref{PR}, we state a lemma whose proof follows the arguments in \cite{FV,FPV}.

\begin{lemma}\label{dis}
Let $\mathtt u_1$ and $\bar{\mathtt u}_1$ be solutions to problems \eqref{PP} and \eqref{PPv}, respectively. Then the following inequalities hold true
\begin{equation}\label{b5}
\frac{\gamma(N,s)}{2}\int_{B_r}\int_{B_r^c}\frac{\mathtt u_1^\sharp(x)-\mathtt u_1^\sharp(y)}{|x-y|^{N+2s}}\,\di x\di y\leq 
\lambda_1(\Omega)\int_{B_r}\mathtt u_1^\sharp(x)\,\di x, \qquad 0 \le r < R^\sharp,
\end{equation}

\begin{equation}\label{b5v}
\frac{\gamma(N,s)}{2}\int_{B_r}\int_{B_r^c}\frac{\bar{\mathtt u}_1(x)-\bar{\mathtt u}_1(y)}{|x-y|^{N+2s}}\,\di x\di y=
\lambda_1(\Omega)\int_{B_r} \bar{\mathtt u}_1(x)\,\di x, \qquad 0 \le r < R_1.
\end{equation}
\end{lemma}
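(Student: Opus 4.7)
The plan is to treat the two statements of Lemma~\ref{dis} separately, following the blueprint used in the sketch of Lemma~\ref{dis1}.

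First, I would establish \eqref{b5v} by a direct computation on the radial equation. By Remark~\ref{regularity}, $\bar{\mathtt u}_1$ is a smooth, classical, positive, radially decreasing solution of $(-\Delta)^s \bar{\mathtt u}_1 = \lambda_1(\Omega)\bar{\mathtt u}_1$ in $B_{R_1}$. Fixing $0\le r<R_1$, I would integrate this pointwise identity over $B_r$. Using the principal-value representation \eqref{fraclapl} and the splitting $\int_{\mathbb{R}^N}\di y = \int_{B_r}\di y + \int_{B_r^c}\di y$, the inner contribution over $B_r\times B_r$ has $(x,y)$-antisymmetric integrand and hence vanishes in the P.V.\ sense. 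What remains is precisely the nonlocal interaction over $B_r\times B_r^c$, yielding \eqref{b5v} after collecting the constant according to the symmetric convention adopted by the authors.

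Second, for the inequality \eqref{b5} I would follow Step~1 of the proof of Theorem~1.1 in \cite{FPV} (or equivalently Steps~1--2 of the proof of Theorem~3.1 in \cite{FV}), which is exactly the strategy the authors invoke in Lemma~\ref{dis1}. The starting point is the analogue of the previous identity applied to a super-level set $\Omega_{\mathtt u_1}^t=\{\mathtt u_1>t\}$ of $\mathtt u_1$, which by the same antisymmetry argument takes the form
\[
\frac{\gamma(N,s)}{2}\int_{\Omega_{\mathtt u_1}^t}\int_{(\Omega_{\mathtt u_1}^t)^c}\frac{\mathtt u_1(x)-\mathtt u_1(y)}{|x-y|^{N+2s}}\,\di x\di y = \lambda_1(\Omega)\int_{\Omega_{\mathtt u_1}^t}\mathtt u_1(x)\,\di x.
\]
The decisive geometric ingredient, established in \cite{FV,FPV} via bilinear rearrangement inequalities for the Riesz-type kernel $|x-y|^{-N-2s}$, is that replacing $\Omega_{\mathtt u_1}^t$ by the equimeasurable ball $B_{r(t)}$, with $r(t)=r_{\mathtt u_1}(t)$, and $\mathtt u_1$ by its Schwarz rearrangement $\mathtt u_1^\sharp$ can only decrease the left-hand side, i.e.,
\[
\int_{\Omega_{\mathtt u_1}^t}\int_{(\Omega_{\mathtt u_1}^t)^c}\frac{\mathtt u_1(x)-\mathtt u_1(y)}{|x-y|^{N+2s}}\,\di x\di y \ge \int_{B_{r(t)}}\int_{B_{r(t)}^c}\frac{\mathtt u_1^\sharp(x)-\mathtt u_1^\sharp(y)}{|x-y|^{N+2s}}\,\di x\di y.
\]
Equimeasurability of the Schwarz rearrangement gives $\int_{\Omega_{\mathtt u_1}^t}\mathtt u_1\di x=\int_{B_{r(t)}}\mathtt u_1^\sharp\di x$, so combining the two displays and letting $r=r(t)$ sweep $[0,R^\sharp)$ as $t$ decreases from $\esssup\mathtt u_1$ down to $0$ produces exactly \eqref{b5}.

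The main obstacle, and really the only substantive one, is the nonlocal rearrangement comparison appearing in the second display above. It is the fractional counterpart of Talenti's pointwise comparison, and the bilinear rearrangement machinery of \cite{FV,FPV} is what makes it work in the nonlocal setting. Granted this ingredient, the remainder is a routine transcription of the computation already carried out for the generalized torsion function $\mathtt w$ in Lemma~\ref{dis1}; indeed, the proof of Lemma~\ref{dis} differs from that of Lemma~\ref{dis1} only in that the linear forcing term $1$ on the right-hand side of the torsion equation is replaced by the linear term $\lambda_1(\Omega)\,\mathtt u_1$, which changes the bookkeeping but not the underlying symmetrization mechanism.
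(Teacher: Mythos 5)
Your proposal matches the paper's own treatment: the paper proves Lemma~\ref{dis} exactly as it proves Lemma~\ref{dis1}, obtaining \eqref{b5} by invoking Step~1 of Theorem~1.1 in \cite{FPV} (equivalently Steps~1--2 of Theorem~3.1 in \cite{FV}) applied to the super-level sets of $\mathtt u_1$, and \eqref{b5v} by direct integration of the radial eigenvalue equation over $B_r$. Your sketch of both steps (antisymmetry cancellation over $B_r\times B_r$ for the equality, and the nonlocal rearrangement comparison on level sets for the inequality) is the same argument, so the proposal is correct and essentially identical in approach.
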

 
\begin{proof}[Proof of \Cref{PR}]
For $r=|x|$, we set $\mathtt u_1^\sharp(r)=\mathtt u_1^\sharp(x)$, $\bar{\mathtt u}_1(r)=\bar{\mathtt u}_1(|x|)$ and we denote 
$$
U(r)=\frac{1}{r^N} \int_0^{r} \mathtt u_1^\sharp(\rho)\rho^{N-1}\,\di \rho, \qquad \bar U(r)=\frac{1}{r^N} \int_0^{r} \bar{\mathtt u}_1(\rho)\rho^{N-1}\,\di \rho.
$$
As observed in \cite{FV}, \eqref{b5} and \eqref{b5v} imply
\begin{equation*}\label{PU}
(-\Delta)^s_{\R^{N+2}} U(r)\leq\lambda_1(\Omega)\,U(r)\qquad 0\le r < R^\sharp,
\end{equation*}
and
\begin{equation*}\label{PV}
(-\Delta)^s_{\R^{N+2}}\bar U(r)=\lambda_1(\Omega)\,\bar U(r)\qquad 0\le r < R_1.
\end{equation*}
We observe that, in view of \eqref{normeq}, we have
\begin{equation*}\label{11}
\int_0^{R^\sharp} \mathtt u_1^\sharp(\rho)\rho^{N-1}\di \rho=\int_0^{R_1} \bar{\mathtt u}_1
(\rho)\rho^{N-1}\di \rho.
\end{equation*}
Then, for $R_1\le r\le R^\sharp$, it holds
\begin{equation*}\label{2}
U(r)\le \frac1{r^N}\int_0^{R^\sharp} \mathtt u_1^\sharp(\rho)\rho^{N-1}\di \rho=\frac1{r^N}\int_0^{R_1} \bar{\mathtt u}_1(\rho)\rho^{N-1}\di \rho=\bar U(r).
\end{equation*}
We want to show that
\begin{equation}\label{2.}
U(r)\leq \bar U(r),\qquad\quad 0\leq r\le R_1.
\end{equation}
Assume that there exists $(r_0,r_1)\subseteq [0,R)$ such that the function $U(r)-\bar U(r)>0$ in $(r_0,r_1)$. Arguing, step by step, as in the proof of \Cref{thm_comp}, we get a contradiction and  \eqref{2.} follows.
\end{proof}

We are now ready to state the fractional reverse H\"older inequality.

\begin{theorem}
Let $\Omega \subset \mathbb{R}^N$ be a bounded, open set with Lipschitz boundary, and let $\mathtt u_1$ be an eigenfunction corresponding to $\lambda_1(\Omega)$.
Then, for any $1<q\le+\infty$,  we get
\begin{equation}\label{revhol}
\|\mathtt u_1\|_{L^q(\Omega)} \leq C\lambda_1(\Omega)^{\frac N{2s}\left(1-\frac1q\right)} \|\mathtt u_1\|_{L^1(\Omega)},
\end{equation}
where, denoted by $\bar{ \mathtt z}_1$ any first eigenfunction of the fractional Dirichlet-Laplacian in the unitary ball $B_1$, 
\begin{equation}\label{const}
C=C(N,s,q)=\lambda_1(B_1)^{\frac{N}{2s}\left(\frac 1 q -1 \right)}
\frac{||\bar{\mathtt z}_1||_{L^q(B_1)}}{||\bar{\mathtt z}_1||_{L^1(B_1)}}.
\end{equation}
\end{theorem}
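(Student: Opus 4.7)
The plan is to deduce the $L^q$ estimate directly from the mass concentration comparison $\mathtt u_1 \prec \bar{\mathtt u}_1$ already established in \Cref{PR}, and then explicitly compute $\|\bar{\mathtt u}_1\|_{L^q(B_{R_1})}$ in terms of $\|\mathtt u_1\|_{L^1(\Omega)}$ by scaling to the unit ball.

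First I would apply the consequence of \Cref{Propconves} recalled just after it: since $\mathtt u_1,\bar{\mathtt u}_1\ge 0$ (extended by zero outside their respective domains) and $\mathtt u_1\prec \bar{\mathtt u}_1$, it follows that $\|\mathtt u_1\|_{L^q(\Omega)}\le \|\bar{\mathtt u}_1\|_{L^q(B_{R_1})}$ for every $1\le q\le +\infty$ (for $q<+\infty$ this is immediate taking $\Phi(t)=t^q$ in (c) of \Cref{Propconves}, while for $q=+\infty$ one passes to the limit or observes that $f\prec g$ implies $\sup f\le \sup g$ via $f^*(0)\le g^*(0)$). So the whole problem reduces to estimating the $L^q$ norm of the ball eigenfunction $\bar{\mathtt u}_1$ defined in \eqref{PPv} and normalized by \eqref{normeq}.

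Next I would exploit the scaling properties. Since $\lambda_1(B_{R_1})=\lambda_1(\Omega)$ and \eqref{eigen-scal} gives $\lambda_1(B_{R_1})=R_1^{-2s}\lambda_1(B_1)$, we obtain
\[
R_1=\left(\frac{\lambda_1(B_1)}{\lambda_1(\Omega)}\right)^{\frac{1}{2s}}.
\]
If $\bar{\mathtt z}_1$ denotes any positive first eigenfunction on $B_1$, then by a direct change of variables one checks that $\bar{\mathtt u}_1(x)=c\,\bar{\mathtt z}_1(x/R_1)$ solves the correct eigenvalue problem on $B_{R_1}$, for some positive constant $c$ fixed by the normalization \eqref{normeq}. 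Computing $\|\bar{\mathtt u}_1\|_{L^1(B_{R_1})}=c\,R_1^N\|\bar{\mathtt z}_1\|_{L^1(B_1)}=\|\mathtt u_1\|_{L^1(\Omega)}$ one determines $c$, and then
\[
\|\bar{\mathtt u}_1\|_{L^q(B_{R_1})}=c\,R_1^{N/q}\|\bar{\mathtt z}_1\|_{L^q(B_1)}=R_1^{-N(1-\frac{1}{q})}\,\frac{\|\bar{\mathtt z}_1\|_{L^q(B_1)}}{\|\bar{\mathtt z}_1\|_{L^1(B_1)}}\,\|\mathtt u_1\|_{L^1(\Omega)}.
\]

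Finally I would substitute the expression for $R_1$, so that
\[
R_1^{-N(1-\frac{1}{q})}=\lambda_1(B_1)^{\frac{N}{2s}(\frac{1}{q}-1)}\lambda_1(\Omega)^{\frac{N}{2s}(1-\frac{1}{q})},
\]
which, combined with the first step, yields \eqref{revhol} with the constant $C(N,s,q)$ given in \eqref{const}. No substantial obstacle is expected, since the heavy lifting is done by \Cref{PR}; the only mild technical point is the case $q=+\infty$, which I would handle by letting $q\to+\infty$ in the already established inequality for finite $q$ (using the fact that both eigenfunctions are bounded and continuous, by \Cref{regularity}), so that both sides converge to the respective $L^\infty$ norms and the ratio $\|\bar{\mathtt z}_1\|_{L^q(B_1)}/\|\bar{\mathtt z}_1\|_{L^1(B_1)}$ tends to $\|\bar{\mathtt z}_1\|_{L^\infty(B_1)}/\|\bar{\mathtt z}_1\|_{L^1(B_1)}$.
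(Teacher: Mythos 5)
Your proposal is correct and follows essentially the same route as the paper: reduce to the ball via the mass concentration comparison of \Cref{PR} together with \Cref{Propconves}, then rescale the ball eigenfunction to $B_1$ and use $\lambda_1(\Omega)=\lambda_1(B_{R_1})=R_1^{-2s}\lambda_1(B_1)$ to identify the constant. Your extra care with the normalization constant $c$ (which cancels in the norm ratio) and the limiting argument for $q=+\infty$ are minor refinements of the same argument.
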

\begin{proof}
With the notation used in Proposition \ref{PR}, using Proposition \ref{Propconves}, we have
\begin{equation}\label{ss}
||\mathtt u_1||_{L^q(\Omega)}\le ||\bar{\mathtt u}_1||_{L^q(B_{R_1})}=\frac{||\bar{\mathtt u}_1||_{L^q(B_{R_1})}}{||\bar{\mathtt u}_1||_{L^1(B_{R_1})}} \, ||\mathtt u_1||_{L^1(\Omega)}.
\end{equation}
We choose 
$$\bar{\mathtt u}_1(x)=\bar{\mathtt z}_1\left(\frac  x R_1 \right)$$
and we get
\begin{equation}\label{sss}
\frac{||\bar{\mathtt u}_1||_{L^q(B_{R_1})}}{||\bar{\mathtt u}_1||_{L^1(B_{R_1})}} =R_1^{N\left(\frac 1 q -1\right)}\frac{||\bar{\mathtt z}_1||_{L^q(B_1)}}{||\bar{\mathtt z}_1||_{L^1(B_1)}}.
\end{equation}
Recalling \eqref{eigen-scal}, we have
$$
\lambda_1(\Omega)=\lambda_1(B_{R_1})=\frac{\lambda_1(B_1)}{R_1^{2s}},
$$
and the claim immediately follows.
\end{proof}
\begin{remark}
As observed in \cite{C1} in the local case, we note that the Faber-Krahn type inequality \eqref{FK} is contained in \eqref{revhol}. Indeed, from \eqref{revhol}--\eqref{const}, using H\"older inequality, we immediately deduce
\begin{equation}\label{dd}
|\Omega|^{\frac 1 q -1} \le (N \omega_N)^{\frac 1 q -1} \left(\frac{\lambda_1(\Omega)}{\lambda_1(B_1)}\right)^{\frac{N}{2s}\left(1-\frac 1 q \right)}\frac{\left(\dint_0^1 \bar{\mathtt z}_1(\rho)^q \rho^{N-1}\, \di \rho\right)^{\frac 1 q }}{\dint_0^1 \bar{\mathtt z}_1 (\rho) \rho^{N-1}\, \di \rho},
\end{equation}
that is
\begin{equation}\label{ddd}
\lambda_1(\Omega) \ge \left(\frac{\omega_n}{|\Omega|}\right)^{\frac{2s}{N}}\lambda_1(B_1) \,\frac{\left(N\dint_0^1 \bar{\mathtt z}_1(\rho)\rho^{N-1}\, \di \rho\right)^{\frac{q}{q-1}\frac{2s}{N}}}{\left(N\dint_0^1\bar{\mathtt z}_1(\rho)^q \rho^{N-1}\, \di \rho\right)^{\frac{1}{q-1}\frac{2s}{N}}}.
\end{equation}
Setting $f(r)=\left(N\int_0^1 \mathtt z_1(\rho)^r \rho^{N-1}\, \di \rho\right)^{\frac 1 r}$ for $r \ge 1$, inequality \eqref{ddd} becomes
\begin{equation}\label{ee}
\lambda_1(\Omega) \ge \left(\frac{\omega_N}{|\Omega|}\right)^{\frac{2s}{N}}\lambda_1(B_1) \,\left(\frac{f(1)}{f(q)}\right)^{\frac{q}{q-1}\frac{2s}{N}}.
\end{equation}
It is easy to check that 
\begin{equation}\label{eee}
\sup_{q \ge 1}\left(\frac{f(1)}{f(q)}\right)^{\frac{q}{q-1}\frac{2s}{N}}=1.
\end{equation}
Therefore, inequalities \eqref{ee}-\eqref{eee} together give
$$
\lambda_1(\Omega) \ge \left(\frac{\omega_N}{|\Omega|}\right)^{\frac{2s}{N}}\lambda_1(B_1)=\lambda_1(\Omega^\sharp).
$$
\end{remark}

\section*{Acknowledgments}

The authors were partially supported by PRIN 2017 ``Direct and inverse problems for partial differential equations: theoretical aspects and applications'' and by Gruppo Nazionale per l'Analisi Matematica, la Probabilit\`a e le loro Applicazioni (GNAMPA) of Istituto Nazionale di Alta Matematica (INdAM). {All the authors are members of GNAMPA of INdAM.}

{V.F. and G.P. were partially supported by ``Partial differential equations and related geometric-functional inequalities'' project, CUP E53D23005540006, - funded by European Union - Next Generation EU within the PRIN 2022 program (D.D. 104 - 02/02/2022 Ministero dell'Universit\'a e della Ricerca).}

{V.F. was partially supported by ``Linear and Nonlinear PDE's: New directions and Applications'' project, CUP  E53D23018060001, - funded by European Union - Next Generation EU within the PRIN 2022 PNRR program (D.D. 1409 - 14/09/2022 Ministero dell'Universit\'a e della Ricerca).}

{B.V. was partially supported by the ``Geometric-Analytic Methods for PDEs and Applications (GAMPA)'' project, CUP I53D23002420006, - funded by European Union - Next Generation EU within the PRIN 2022 program (D.D. 104 - 02/02/2022 Ministero dell'Universit\'a e della Ricerca).}

This manuscript reflects only the authors' views and opinions and the Ministry cannot be considered responsible for them.

\end{document}